\theoremstyle{plain}
\newtheorem{teo}{Theorem}[section]
\newtheorem{theo}[teo]{Theorem}
\newtheorem{coro}[teo]{Corollary}
\newtheorem{lema}[teo]{Lemma}
\newtheorem{prop}[teo]{Proposition}
\theoremstyle{remark}
\newtheorem{rema}[teo]{Remark}
\theoremstyle{definition}
\newtheorem{defi}[teo]{Definition}
\newtheorem{obse}[teo]{Observation}
\newtheorem{example}[teo]{Example}
\newcommand{\id}{\ensuremath{\mathrm{id}}}
\newcommand{\Hom}{\ensuremath{\mathrm{Hom}}}
\renewcommand{\Im}{\ensuremath{\mathrm{Im}}}
\newcommand{\Ker}{\ensuremath{\mathrm{Ker}}}
\newcommand{\dem}{\emph{Proof:}\ }
\newcommand{\ant}{\mathcal{S}}
\newcommand{\K}{{\mathbb K}}
\newcommand{\Z}{{\mathbb Z}}
\newcommand{\C}{{\mathbb C}}
\newcommand{\tl}{\triangleleft}
\newcommand{\tr}{\triangleright}
\newcommand{\trb}{\blacktriangleright}
\newcommand{\fin}{\qed}
\newcommand{\cop}{\ensuremath{\mathrm{cop}}}
\newcommand{\cc}{\left(}
\newcommand{\dd}{\right)}
\begin{document}

\title[Some constructions of compact quantum groups.]
{Some constructions of compact quantum groups.}
\author{Andr\'es Abella}
\address{Facultad de Ciencias\\Universidad de la Rep\'ublica\\
Igu\'a 4225\\11400 Montevideo\\Uruguay\\}
\email{andres@cmat.edu.uy}
\author{Walter Ferrer Santos}
\thanks{The second author would like to thank Csic-UDELAR,  Conycit-MEC, Uruguay and Anii, Uruguay.}
\email{wrferrer@cmat.edu.uy}
\author{Mariana Haim}
\thanks{The third author would like to thank Conycit-MEC, Uruguay and SNI, Uruguay.}
\email{negra@cmat.edu.uy}

\begin{abstract}
The purpose of this paper is to consider some basic constructions in the category of compact quantum
groups --for example the case of extensions-- with special emphasis in the finite dimensional situation.
We give conditions, in some cases necessary and sufficient, to extend to the new objects the original compact structure.
\end{abstract}
\maketitle

\section{Introduction}

Since the beginnings of group theory, in the work of Frobenius, Schur and others, the concept of product and its variations
as well as the concept of ``extension'' have been recognized as important tools in order to understand the structure of
the groups and of its representations. This line of thought has been carried through many different algebraic situations, more recently to the theory of Hopf algebras, see for example \cite{kn:andrus-devoto}, \cite{kn:andrus2}, \cite{kn:Majid},\cite{kn:masuoka},  \cite{kn:masuoka2}, \cite{kn:schneider2} and  \cite{kn:takeuchi}. This paper intends to understand how to carry on the classical
constructions of products and extensions to the context of compact quantum groups. In this direction, previous results
have been obtained in the particular settings of Kac algebras or von Neumann algebras, see for example \cite{kn:Majid2}, and \cite{kn:Yamanouchi}, and also \cite{kn:andrus2} for a Hopf algebra approach.

We assume the reader to be familiar with the basic results and notations in Hopf theory, specially in the case of
 compact quantum groups. We refer to \cite{kn:mont} for the general theory and notations and to
\cite{kn:andres-walter-mariana},  \cite{kn:andrus0} and \cite{kn:koor2} for the case of compact quantum groups.

Next we present a brief summary of the contents of this paper.

In Section \ref{section:prelim}, we recall the basic definitions of $*$--Hopf algebras, its representations and
corepresentations and the notion of unitary inner product. We list some basic well known results  that will be
used throughout the paper.

In Section \ref{section:compactness}, we study the behaviour of the compactness property of a $*$--Hopf algebra when we perform a Drinfel'd twist.

In Section \ref{section:matchedpairs}, we recall the definition of matched pair of Hopf algebras and the naturally obtained product, and find conditions
to extend the compact structures on the factors to a compact structure on the product. We consider the particular case
of the quantum double, and the characterization of compactness in terms of the category of Yetter--Drinfel'd modules.

In Section \ref{section:extensions}, we study the case of extensions of compact quantum groups, and find conditions to
extend the compact structure from the given Hopf algebras to the extension.

In Section \ref{section:singerpairs} we consider the particular cases of (cocycle )Singer pairs and of pairs of groups.

The authors would like to thank the referee of this paper for many helpful comments, in particular for pointing out the results of \cite{kn:andrus2} that suggested some generalizations of the formulations and results of a first version of the paper.

\section{Preliminaries and basic notations}\label{section:prelim}

All the objects considered will be based on vector spaces over the field of complex numbers $\C$. If $V$ is a vector space, its linear dual will be denoted as $V^\vee$.
We use the symbol $\star$ for the convolution product of morphisms and if $f$ is such a morphism, $f^{-1}$ represents its (eventually existing) inverse. We adopt Sweedler's notation for the comultiplication and for the comodule structures.

\subsection{Finite Hopf algebras}

If $H$ is a Hopf algebra a function $\varphi\in H^{\vee}$ is called a \emph{right} or \emph{left integral} if
$\sum\varphi(x_1)x_2=\varphi(x)1$ or $\sum x_1\varphi(x_2)=\varphi(x)1$, for all
$x\in H$, according to the case.
A \emph{normal integral} is a left and right integral $\varphi$ such that $\varphi(1)=1$; in case it exists, a normal integral is unique.
If a right or left integral $\varphi$ verifies
$\varphi(1) = 1$, then it is a right and left integral and
hence it is normal, see \cite{kn:Dascalescu}.

A \emph{finite} Hopf algebra is a finite dimensional Hopf algebra.
A Hopf algebra $H$ is \emph{semisimple} if every $H$-module is completely reducible and
it is \emph{cosemisimple} if every $H$-comodule is completely reducible.
A semisimple Hopf algebra is always finite.

\begin{teo}[\cite{kn:Dascalescu}, Theorem 7.4.6] \label{teo:semisimple}\label{teo:props-ss}
Let $H$ be a Hopf algebra. The following assertions are equivalent:
\begin{enumerate}
\item
$H$ is cosemisimple.
\item \label{intH^vee}
There exists a normal integral $\varphi\in H^{\vee}$.
\end{enumerate}
If $H$ is a finite Hopf algebra, then the following assertions are equivalent:
\begin{enumerate}
\item \label{ss}
$H$ is semisimple.
\item \label{intH}
There exists a unique element $t\in H$ --called a normal integral-- such that $\varepsilon(t)=1$ and $tx=xt=\varepsilon(x)t$,
for all $x\in H$.
\item \label{coss}
$H$ is cosemisimple.
\item
The antipode of $H$ is an involution i.e. $\ant^2=\id$.
\end{enumerate}
Moreover if $H$ is a semisimple Hopf algebra, and $t\in H$ and $\varphi\in H^{\vee}$ are as above, then

\begin{enumerate}
\item
$\ant(x)=(\dim H)\sum \varphi(t_1x)t_2$, for all $x\in H$.
\item
$H\rightharpoonup\varphi=H^{\vee}=\varphi\leftharpoonup H$, where
$(x\rightharpoonup\varphi)(y)=\varphi(yx)$ and
$(\varphi\leftharpoonup x)(y)=\varphi(xy)$, for all $x,y\in H$.
\item
$\ant(t)=t$ and $\varphi\ant=\varphi$.
\item
$\sum t_1\otimes t_2=\sum t_2\otimes t_1$ and $\varphi(xy)=\varphi(yx)$, for all
$x,y\in H$.
\qed
\end{enumerate}
\end{teo}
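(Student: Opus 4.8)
The plan is to reduce everything to (i) the non-degenerate evaluation pairing between $H$ and $H^\vee$, (ii) the uniqueness statements for integrals, and (iii) one genuinely deep input; all steps but (iii) are then short manipulations in Sweedler notation. First, for an arbitrary Hopf algebra $H$, I would prove the equivalence of cosemisimplicity with the existence of a normal integral $\varphi\in H^\vee$ as follows. Cosemisimplicity of $H$ means that $H$, viewed as a right comodule over itself via $\Delta$, is semisimple (every comodule embeds into a direct sum of copies of $H$); the unit realises $\C$ as the subcomodule $\C 1\subseteq H$, so a comodule complement $H=\C 1\oplus W$ yields, through the projection onto $\C 1\cong\C$, a functional $\varphi\in H^\vee$ with $\sum\varphi(x_1)x_2=\varphi(x)1$ and $\varphi(1)=1$; by the preliminary remark this one-sided integral is automatically normal. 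Conversely, given a normal integral $\varphi$, the classical averaging argument --- twisting a linear retraction by $\varphi$ and $\ant$ --- splits every inclusion of comodules, so $H$ is cosemisimple.

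Now let $H$ be finite. Maschke's theorem for Hopf algebras gives \ref{ss}$\Leftrightarrow$\ref{intH}: if $H$ is semisimple, splitting the short exact sequences $0\to\Ker\varepsilon\to H\to\C\to 0$ of left and of right $H$-modules produces a left algebra integral $t$ and a right one $t'$, each with $\varepsilon$-value $1$; then $t=\varepsilon(t')\,t=t'\,t=\varepsilon(t)\,t'=t'$, so $t$ is two-sided and unique, which is \ref{intH}. Conversely such a $t$ gives, by Hopf-algebraic averaging, a module retraction of any submodule inclusion, so $H$ is semisimple. The equivalence of \ref{coss} with the existence of a normal integral $\varphi\in H^\vee$ is the first part of the statement, applied to $H$ itself.

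The substantive step is the equivalence of \ref{ss}, \ref{coss} and $\ant^2=\id$, which is the Larson--Radford theorem and the only place where the ground field being $\C$ (characteristic zero) is essential. The statement is invariant under the duality $H\leftrightarrow H^\vee$, which for finite $H$ interchanges semisimplicity and cosemisimplicity --- via $H$-modules $\leftrightarrow$ $H^\vee$-comodules --- and preserves $\ant^2=\id$, since $\ant_{H^\vee}=(\ant_H)^\vee$; so it suffices to handle a single implication. The real work is \ref{ss}$\Rightarrow\ref{coss}$ (equivalently, that $\operatorname{Tr}(\ant^2)\neq 0$) over $\C$. Granting it, everything else follows: by the $H\leftrightarrow H^\vee$ symmetry \ref{coss}$\Rightarrow\ref{ss}$ as well, so $H$ and $H^\vee$ are both semisimple, hence both unimodular (again $t=t'$), hence Radford's formula for $\ant^4$ collapses to $\ant^4=\id$; then $\ant^2$ is a diagonalisable Hopf-algebra automorphism with eigenvalues in $\{1,-1\}$, and a computation of $\operatorname{Tr}(\ant^2)=\dim H$ forces all eigenvalues to be $1$, i.e.\ $\ant^2=\id$; conversely $\ant^2=\id$ gives $\operatorname{Tr}(\ant^2)=\dim H\neq 0$, which by the same trace argument returns semisimplicity. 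I expect this Larson--Radford input (the non-vanishing of $\operatorname{Tr}(\ant^2)$ in characteristic zero) to be the main obstacle; it fails in positive characteristic.

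Finally, the ``moreover'' assertions, with $t\in H$ and $\varphi\in H^\vee$ as above. The formula $\ant(x)=(\dim H)\sum\varphi(t_1x)t_2$ is obtained by checking that its right-hand side is a two-sided convolution inverse of $\id_H$, using that $\varphi$ is both a left and a right integral; putting $x=1$ and using that $\varphi$ is a right integral yields, in passing, $\varphi(t)=1/\dim H$. For $H\rhu\varphi=H^\vee=\varphi\lhu H$ I would use that a nonzero integral makes $H$ a Frobenius algebra, so the forms $(x,y)\mapsto\varphi(yx)$ and $(x,y)\mapsto\varphi(xy)$ are non-degenerate; hence $x\mapsto(x\rhu\varphi)$ and $x\mapsto(\varphi\lhu x)$ are injective, and bijective by equality of dimensions. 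The identities $\ant(t)=t$ and $\varphi\compo\ant=\varphi$ are instances of uniqueness: $\ant(t)$ is again a normalised two-sided algebra integral of $H$, and $\varphi\compo\ant$ a normal integral of $H$, so they coincide with $t$ and $\varphi$ respectively. Lastly, $H$ being unimodular and Frobenius with Frobenius form $\varphi$, its Nakayama automorphism is $\ant^2$, i.e.\ $\varphi(xy)=\varphi(y\,\ant^2(x))$; with $\ant^2=\id$ this reads $\varphi(xy)=\varphi(yx)$, and the same identity for the (equally semisimple) Hopf algebra $H^\vee$ applied to its normal integral $t\in(H^\vee)^\vee$ gives $\sum t_1\otimes t_2=\sum t_2\otimes t_1$.
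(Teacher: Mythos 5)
The paper does not prove this theorem at all: it is quoted as background with a citation to Dascalescu--Nastasescu--Raianu (Theorem 7.4.6), the hard equivalences being the Larson--Radford theorems, and the statement ends with a \qed. So there is no internal argument to compare with; your sketch follows the same standard textbook route (splitting off $\C 1$ in the regular comodule to produce $\varphi$, Maschke-type averaging in both directions, the duality $H\leftrightarrow H^\vee$ exchanging semisimplicity and cosemisimplicity, Radford's $\ant^4$ formula plus a trace argument for $\ant^2=\id$, and uniqueness of integrals together with the Frobenius/Nakayama structure for the ``moreover'' items), and most of the routine steps you describe are correct.

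Two places in your outline are genuine gaps as written. First, in the Larson--Radford step you declare the single deep input to be the non-vanishing of $\operatorname{Tr}(\ant^2)$ in characteristic zero, and then assert that ``a computation of $\operatorname{Tr}(\ant^2)=\dim H$ forces all eigenvalues to be $1$.'' But $\operatorname{Tr}(\ant^2)=\dim H$ does not follow from $\ant^4=\id$ together with $\operatorname{Tr}(\ant^2)\neq 0$ by any elementary manipulation: ruling out the eigenvalue $-1$ is precisely the hard characteristic-zero part of Larson--Radford, and the trace identity $\operatorname{Tr}(\ant^2)=\varepsilon(t)\lambda(1)$ (for $\lambda(t)=1$) that you invoke implicitly in the converse direction is itself a nontrivial ingredient you never state. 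So either grant the full Larson--Radford package ($\ant^2=\id$ for semisimple cosemisimple $H$), or supply that argument. Second, in item (1) of the ``moreover'' list your plan is circular about the constant. Writing $\Psi(x)=\sum\varphi(t_1x)t_2$, the only easy computation is $\sum\Psi(x_1)x_2=(\varphi\otimes\id)\Delta(tx)=\varepsilon(x)\varphi(t)1$, which (since $\id$ is convolution-invertible with inverse $\ant$) gives $\Psi=\varphi(t)\,\ant$, i.e.\ $\ant(x)=\varphi(t)^{-1}\sum\varphi(t_1x)t_2$, and nothing more; to obtain the stated constant $\dim H$ you must know beforehand that $\varphi(t)=1/\dim H$ (in particular $\varphi(t)\neq 0$). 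Deriving $\varphi(t)=1/\dim H$ ``in passing'' by setting $x=1$ in the very formula being proved is not a proof; it needs an independent argument, e.g.\ Radford's trace formula combined with $\ant^2=\id$, or a dual-bases computation of $\operatorname{Tr}(\id)$ using the Frobenius form $\varphi$. The remaining ``moreover'' items (uniqueness giving $\ant(t)=t$ and $\varphi\ant=\varphi$, non-degeneracy of the Frobenius form for $H\rhu\varphi=H^\vee=\varphi\lhu H$, Nakayama automorphism $\ant^2=\id$ giving $\varphi(xy)=\varphi(yx)$, and dualizing to get cocommutativity of $\Delta(t)$) are argued correctly.
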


\begin{rema} [\cite{kn:bbt}]
Let $H$ be a cosemisimple Hopf algebra and $\varphi\in H^\vee$ its normal integral.
There exists a unique algebra automorphism $\mathcal N:H\to H$ characterized by the equality
\begin{equation}\label{eq:naka}
\varphi(xy)=\varphi(y\mathcal N(x)), \quad \forall x,y\in H
\end{equation} and called the \emph{Nakayama automorphism}
and an algebra homomorphism $\alpha\in H^\vee$ --called the \emph{modular function}-- such that
\begin{equation} \label{eq:nakayama}
\mathcal N(x)=\sum \ant^2(x_1)\alpha(x_2),
\quad
\forall x\in H.
\end{equation}
It is easy to deduce from the above equation that the inverse of $\mathcal N$ satisfies a similar equation:

\begin{equation} \label{eq:-nakayama}
  \mathcal N^{-1}(x)=\sum \ant^{-2}(x_1)(\alpha \ant)(x_2),
\quad
\forall x\in H.
\end{equation}
\end{rema}

\subsection{Compact quantum groups}\label{cqg}

A \emph{$*$-bialgebra} is a pair $(H,*)$, where $H$ is a bialgebra, and
$H\stackrel{*}{\to}H$ is a conjugate linear involution which is antimultiplicative and
comultiplicative. It follows from the definition that $1^*=1$ and $\varepsilon*=\overline{\varepsilon}$.
A \emph{$*$-Hopf algebra} is a $*$-bialgebra $(H,*)$ such that $H$ is a Hopf algebra; in this situation it
follows that $(\ant *)^2=\id$.
A $*$-Hopf algebra morphism between $*$-Hopf algebras is a Hopf algebra map  that commutes with  the $*$--operators.

If $H$ is a $*$-Hopf algebra, and $V$ is a right $H$-comodule, a sesquilinear form
$\langle\,\, , \,\rangle:V\times V\to\C$ is \emph{invariant} if
\begin{equation} \label{eq:pi-*-invariante}
\sum \langle u_0,v\rangle \ant(u_1)=\sum \langle u,v_0\rangle v_1^*,\quad
\forall u,v\in V.
\end{equation}
A \emph{unitary (right) comodule} is a pair $(V,\langle\,\, ,\, \rangle)$, where $V$
is a right $H$-comodule and $\langle\,\, ,\, \rangle$ is an invariant inner product on $V$.

In the case of left comodules, the definition is similar and the unitary condition becomes:
\begin{equation}\label{eq:pi-*-invarianteleft}
\sum\ant^{-1}(u_{-1})\langle u_0,v\rangle = \sum v_{-1}^*\langle u,v_0\rangle,\quad
\forall u,v\in V.
\end{equation}

If $V$ is a unitary comodule and $W\subset V$ is a subcomodule --right or left--, then the orthogonal complement $W^\perp$
is also a subcomodule. Hence a unitary comodule is always completely reducible.
\medskip

A \emph{compact quantum group} (CQG) is a $*$-Hopf algebra $H$ such that every right
$H$-comodule admits an invariant inner product
(see \cite{kn:andres-walter-mariana}, \cite{kn:andrus0}, \cite{kn:koor2}, \cite{kn:woro}).
\medskip

If $H$ is a CQG, then it is cosemisimple 
and admits an inner product $\langle\,\, ,\, \rangle$ such that for all $x,y\in H$,
\begin{align*}
\sum \langle x_1,y\rangle \ant(x_2) &= \sum \langle x,y_1\rangle y_2^* .
\end{align*}
Moreover, as every cosemisimple $*$-Hopf algebra $H$ admits a normal integral $\varphi\in H^{\vee}$, and
this integral satisfies that $\varphi\ant=\varphi$ and $\varphi *=\overline{\varphi}$,
then the formula
\begin{equation*} 
\langle x,y \rangle_\varphi=\varphi(y^*x),\quad \forall x,y\in H.
\end{equation*}
defines an invariant Hermitian form.
A direct computation shows that:
\begin{equation} \label{H-*-representacion-regular}
\langle zx,y \rangle_\varphi=\langle x,z^*y \rangle_\varphi,\quad
\forall x,y,z\in H.
\end{equation}
Moreover, if $H$ is finite, then $\varphi(xy)=\varphi(yx)$ implies
\begin{equation} \label{H-*-representacion-regular2}
\langle xz,y \rangle_\varphi=\langle x,yz^* \rangle_\varphi,\quad
\langle x^*,y^* \rangle_\varphi=\langle y,x \rangle_\varphi,\quad
\forall x,y,z\in H.
\end{equation}

\begin{teo}[\cite{kn:andrus}, \cite{kn:woro}] \label{teo:andrus1}
A cosemisimple $*$-Hopf algebra $H$ is a CQG if and only if
$\langle\,\, ,\, \rangle_\varphi$ is an inner product, i.e. if $\varphi(x^*x)>0, \forall x\neq 0, x\in H$. \qed
\end{teo}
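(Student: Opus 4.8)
The statement is an ``if and only if'' about a cosemisimple $*$-Hopf algebra $H$ with normal integral $\varphi$. The ``only if'' direction is essentially tautological: if $H$ is a CQG, then by the discussion preceding the theorem $H$ admits an invariant inner product, and by uniqueness of the normal integral together with the construction $\langle x,y\rangle_\varphi=\varphi(y^*x)$, one checks that this particular Hermitian form is the (essentially unique) invariant one, hence positive definite; so $\varphi(x^*x)>0$ for all nonzero $x$. I would state this briefly, relying on the fact recalled above that $\varphi$ is the unique normal integral and that on a unitary comodule invariant forms are rigid.

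The substance is the ``if'' direction. So assume $\varphi(x^*x)>0$ for all $x\neq 0$; I must show every right $H$-comodule $V$ admits an invariant inner product. The plan is to transport positivity from the regular comodule to an arbitrary $V$. First I would reduce to the case $V$ finite-dimensional: since $H$ is cosemisimple, every comodule is a (possibly infinite) direct sum of simple comodules, each finite-dimensional, and an inner product can be assembled diagonally over the summands, so it suffices to handle simple — hence finite-dimensional — $V$. Next, for finite-dimensional $V$ with structure $\rho(v)=\sum v_0\otimes v_1$, pick an arbitrary inner product $(\,\cdot\,,\cdot\,)$ on $V$ and ``average'' it against $\varphi$: define
\begin{equation*}
\langle u,v\rangle=\sum \varphi\bigl((v_1)^*\,u_1\bigr)\,(u_0,v_0).
\end{equation*}
One verifies, using the $*$-compatibility of the comultiplication, the antipode identity $(\ant\,*)^2=\id$, and the invariance property $\varphi\ant=\varphi$, $\varphi *=\overline\varphi$ of the integral, that $\langle\,\,,\,\rangle$ satisfies the invariance equation \eqref{eq:pi-*-invariante}. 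Positivity of $\langle\,\,,\,\rangle$ is where the hypothesis $\varphi(x^*x)>0$ enters: writing $\langle v,v\rangle$ as a sum of terms $\varphi(x^*x)$ against a Gram matrix coming from $(\,\cdot\,,\cdot\,)$, one gets a nonnegative quantity, and nondegeneracy follows because the coaction is injective (being a comodule map it splits $V$ into $H$), so $\langle v,v\rangle=0$ forces $v=0$.

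The main obstacle is the positivity/nondegeneracy step: showing that the averaged form is not merely nonnegative but strictly positive. The clean way is to realize $\langle\,\,,\,\rangle$ as the restriction (or pullback) of the positive form $\langle\,\,,\,\rangle_\varphi$ on $H$ along a suitable comodule embedding $V\hookrightarrow V\otimes H\cong H^{\dim V}$ built from $\rho$ and a basis dual to an orthonormal basis of $(V,(\,\cdot\,,\cdot\,))$; then strict positivity on $V$ is inherited from strict positivity of $\varphi(x^*x)$ together with injectivity of $\rho$. Once finite-dimensional comodules are handled, reassembling over the simple summands of a general comodule completes the argument, and conversely the existence of such inner products is the definition of CQG, so the equivalence is established.
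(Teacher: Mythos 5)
Your ``if'' direction is correct and is the standard averaging argument: for a finite-dimensional comodule pick any inner product $(\,\cdot\,,\cdot\,)$ and set $\langle u,v\rangle=\sum (u_0,v_0)\,\varphi(v_1^*u_1)$; by coassociativity the invariance condition \eqref{eq:pi-*-invariante} for this form reduces to the invariance of $\langle\,\, ,\,\rangle_\varphi$ on the regular comodule, which is already recorded in the paper, and writing $\rho(v)=\sum_i e_i\otimes y_i$ in an orthonormal basis gives $\langle v,v\rangle=\sum_i\varphi(y_i^*y_i)>0$ whenever $v\neq 0$, since $(\id\otimes\varepsilon)\rho=\id$. The reduction to finite-dimensional (simple) comodules via cosemisimplicity and reassembling orthogonally is also fine.

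The genuine gap is in the ``only if'' direction, which is not tautological; it is the substantial half of the theorem and is precisely what the citations \cite{kn:andrus}, \cite{kn:woro} carry. Your argument rests on the claim that the invariant Hermitian form on the regular comodule is ``essentially unique'', hence forced to be (a positive multiple of) an invariant inner product. Rigidity of invariant Hermitian forms holds only for \emph{simple} comodules, where the space of such forms is one-dimensional over $\R$. The regular comodule is far from simple: $H=\bigoplus_\lambda C_\lambda$ and each simple subcoalgebra $C_\lambda$ is, as a comodule, isomorphic to $V_\lambda^{\oplus n_\lambda}$ with $n_\lambda=\dim V_\lambda$, so the invariant Hermitian forms on it form an $n_\lambda^2$-dimensional real space that contains indefinite and degenerate forms; invariance of $\langle\,\, ,\,\rangle_\varphi$ therefore says nothing about its signature. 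Uniqueness of the normal integral does not rescue this: from an invariant inner product $(\,\cdot\,,\cdot\,)$ on the regular comodule one can recover $\varphi(x)=(x,1)/(1,1)$ (the functional $x\mapsto(x,1)$ is a left integral), but positivity of $x\mapsto (x^*x,1)$ would require compatibility of $(\,\cdot\,,\cdot\,)$ with the \emph{product}, which an invariant form for the comodule structure need not satisfy. The actual proof needs the Schur/Peter--Weyl orthogonality relations for the Haar functional: choosing a unitary matrix corepresentation $u^\lambda$ for each simple comodule, one shows that coefficients of inequivalent irreducibles are $\langle\,\, ,\,\rangle_\varphi$-orthogonal and that the Gram matrix $\varphi\big((u^\lambda_{ij})^*u^\lambda_{kl}\big)$ is, up to a positive normalization, the inverse of a positive intertwiner $F_\lambda$ between $u^\lambda$ and its double contragredient; positivity of $\varphi(x^*x)$ then follows. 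This is the content of the cited results and cannot be obtained from a uniqueness claim about invariant forms.
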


\begin{rema}\label{rk:nos}
A Hopf subalgebra of a cosemisimple $*$-Hopf algebra is again a $*$-Hopf algebra. Indeed, if we write the decomposition of $H$ into simple coalgebras $H=\bigoplus_{\lambda \in \Lambda} C_\lambda$, we have (see for example \cite{kn:andres-walter-mariana}) that $S(C_\lambda)=(C_\lambda)^*$ for each $\lambda \in \Lambda$. Taking a Hopf-subalgebra $K$ of $H$, it will be of the form $K=\bigoplus_{\lambda \in \Lambda} C_\lambda$ and therefore $K^* =\bigoplus_{\lambda \in \Lambda} C_\lambda^*=S(K)=K$.
\end{rema}
In view of Theorem \ref{teo:andrus1} and Observation \ref{rk:nos}, we have the following result.

\begin{coro}\label{coro:cqg-cerito}
If $H$ is a CQG, then every sub-Hopf algebra of $H$ is a CQG. \qed
\end{coro}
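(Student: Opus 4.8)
The plan is to deduce the corollary directly from the two preceding results, Theorem \ref{teo:andrus1} and Remark \ref{rk:nos}, with essentially no extra work. Let $H$ be a CQG and let $K \subseteq H$ be a sub-Hopf algebra. First I would note that $H$, being a CQG, is in particular a cosemisimple $*$-Hopf algebra; by Remark \ref{rk:nos}, $K$ inherits the $*$-operation (it is closed under $*$, since $K$ is a direct sum of some of the simple subcoalgebras $C_\lambda$ of $H$ and each such sum is $*$-stable), so $(K,*)$ is a $*$-Hopf algebra. Next I would check that $K$ is itself cosemisimple: this is immediate because every $K$-comodule is in particular an $H$-comodule (via the inclusion $K \hookrightarrow H$ corestricting comodule structures), hence completely reducible as an $H$-comodule, and a decomposition into $H$-simple pieces refines to a decomposition into $K$-simple pieces; alternatively one simply observes that a sub-bialgebra of a cosemisimple coalgebra is cosemisimple, or invokes Theorem \ref{teo:semisimple} together with the fact that $\ant^2 = \id$ on $H$ restricts to $\ant^2=\id$ on $K$ when $H$ is moreover finite — but the comodule argument works in general.

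The heart of the argument is the positivity criterion of Theorem \ref{teo:andrus1}. Since $H$ is a CQG, its normal integral $\varphi \in H^\vee$ satisfies $\varphi(x^*x) > 0$ for all nonzero $x \in H$. I claim the normal integral $\varphi_K$ of the cosemisimple Hopf algebra $K$ is just the restriction $\varphi|_K$. Indeed, $\varphi|_K$ is a left and right integral on $K$ with $\varphi|_K(1) = \varphi(1) = 1$, hence by uniqueness of the normal integral it is the normal integral of $K$. Therefore for every nonzero $x \in K \subseteq H$ we have $\varphi_K(x^*x) = \varphi(x^*x) > 0$, so the Hermitian form $\langle\,\,,\,\rangle_{\varphi_K}$ on $K$ is positive definite, i.e. an inner product. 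By Theorem \ref{teo:andrus1} applied to the cosemisimple $*$-Hopf algebra $K$, this means $K$ is a CQG.

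The only point requiring a small check is that the restriction of a normal integral is again a normal integral, and that the $*$-structure really does restrict; both are handled above using the coalgebra decomposition of Remark \ref{rk:nos} and the uniqueness of normal integrals recorded after Theorem \ref{teo:semisimple}. There is no genuine obstacle: once one observes that positivity of $\varphi$ on $H$ trivially restricts to positivity of $\varphi_K = \varphi|_K$ on the subalgebra $K$, the result is an immediate consequence of the characterization in Theorem \ref{teo:andrus1}. I would therefore expect the proof in the paper to be a one- or two-line remark of exactly this form, which is consistent with the $\qed$ appearing directly after the statement.
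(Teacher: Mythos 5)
Your proposal is correct and follows exactly the route the paper intends: the corollary is stated with no written proof beyond the remark ``In view of Theorem \ref{teo:andrus1} and Observation \ref{rk:nos}'', and your argument (the $*$-structure restricts by Remark \ref{rk:nos}, the normal integral restricts to the normal integral of the subalgebra, and positivity of $\varphi(x^*x)$ trivially restricts, so Theorem \ref{teo:andrus1} applies) is precisely the intended one-paragraph verification. No discrepancies to report.
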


\begin{rema}
If $(H,*)$ is a $*$-Hopf algebra, then it is easy to see that $(H^{\text{op}},*)$,
$(H^{\text{cop}},*)$ and $(H^{\text{bop}},*)$ are $*$-Hopf algebras.
Using Theorem \ref{teo:andrus1} we prove that if $H$ is a CQG, then  $H^{\text{op}}$,
$H^{\text{cop}}$ and $H^{\text{bop}}$ are also CQG.
\end{rema}

Let $H$ be a cosemisimple $*$-Hopf algebra and let $\varphi\in H^{\vee}$ be the normal
integral. As $\varphi$ is also a normal integral for $H^{\text{cop}}$, then $\langle\,\, ,\, \rangle_\varphi$
is invariant in $H^{\text{cop}}$, \textit{i.e.} for all $x,y\in H$,
\begin{align}
\sum\ant^{-1}(x_{1})\langle x_2,y\rangle_\varphi = \sum y_{1}^*\langle x,y_2\rangle_\varphi, \label{eq:left-inv-H}
.
\end{align}
Let us assume that $H$ is a CQG and that $(V,\chi)$ is a left $H$-comodule.
Then $(V,\operatorname{sw}\chi)$ is naturally a right $H^{\text{cop}}$-comodule where $\operatorname{sw}$ is the
usual transposition of the tensor factors, hence --being $H^{\text{cop}}$ a CQG-- we deduce the existence a right
invariant inner product $\langle\,\, ,\, \rangle$ in $(V,\operatorname{sw}\chi)$, this inner product can be read as
left invariant inner product in the original $(V,\chi)$.

Then a $*$-Hopf algebra $H$ is a CQG if and only if every left $H$-comodule admits an invariant inner product.
Note that the equation \eqref{eq:left-inv-H}
implies that $\langle\,\, ,\, \rangle_\varphi$ is an invariant inner product for the regular left $H$-comodule $H$.

For future reference we write the following result.

\begin{teo}[\cite{kn:andrus}, Theorem 2.6] \label{teo:andrus2}
If $*$ and $\#$ are involutions in a Hopf algebra $H$ such that
$(H,*)$ and  $(H,\#)$ are CQG, then there exists a Hopf algebra automorphism
$T:H\to H$ such that $T*=\# T$.
\qed
\end{teo}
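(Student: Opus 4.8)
The plan is to realise $T$ as a fractional power of the linear operator $\rho:=*\circ\#$ on $H$ (so $\rho(x)=(x^{\#})^{*}$), after forcing $\rho$ to be a positive self--adjoint Hopf algebra automorphism. First I would check that $\rho$ is a Hopf algebra automorphism: as $*$ and $\#$ are conjugate--linear, antimultiplicative and comultiplicative, their composite $\rho$ is a linear bialgebra endomorphism of $H$; it is bijective because $*$ and $\#$ are, and $\varepsilon\rho=\varepsilon$ since $\varepsilon*=\overline\varepsilon=\varepsilon\#$. Every bialgebra endomorphism of a Hopf algebra commutes with the antipode, so $\rho$ is a Hopf algebra automorphism. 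From $*^{2}=\#^{2}=\id$ one also reads off immediately that $*\rho*=\#*=\rho^{-1}$ (equivalently $\rho\circ*=*\circ\rho^{-1}$) and that $\#=*\circ\rho$.

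Next I would bring in the Haar integral. A $\mathrm{CQG}$ is cosemisimple, and $(H,*)$ and $(H,\#)$ have the same underlying Hopf algebra, hence the same (unique) normal integral $\varphi$, which satisfies $\varphi*=\overline\varphi=\varphi\#$. By Theorem~\ref{teo:andrus1} both $\langle x,y\rangle_{1}:=\varphi(y^{*}x)$ and $\langle x,y\rangle_{2}:=\varphi(y^{\#}x)$ are inner products on $H$. Since $y^{\#}=\rho(y)^{*}$, we get $\langle x,y\rangle_{2}=\varphi(\rho(y)^{*}x)=\langle x,\rho(y)\rangle_{1}$ for all $x,y\in H$. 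Comparing the conjugate symmetry of the two forms yields $\langle\rho(x),y\rangle_{1}=\langle x,\rho(y)\rangle_{1}$, i.e. $\rho$ is self--adjoint for $\langle\ ,\ \rangle_{1}$, while $\langle x,\rho(x)\rangle_{1}=\langle x,x\rangle_{2}>0$ for $x\neq0$ gives positive--definiteness of $\rho$.

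Finally I would take the square root and conclude. Writing $H=\bigoplus_{\lambda}C_{\lambda}$ as the sum of its finite--dimensional simple subcoalgebras — pairwise orthogonal for $\langle\ ,\ \rangle_{1}$ by the orthogonality relations and permuted by the Hopf automorphism $\rho$ — positivity and self--adjointness force $\rho$ to preserve each $C_{\lambda}$, so functional calculus on the $C_{\lambda}$ produces well--defined operators $\rho^{t}\in\operatorname{End}(H)$ for $t\in\R$. Each $\rho^{t}$ is still a Hopf algebra automorphism: on the $\mu$--eigenspace $H_{\mu}$ of $\rho$ it acts as the scalar $\mu^{t}$, and the eigenspace grading is compatible with the product ($H_{\mu}H_{\nu}\subseteq H_{\mu\nu}$), with the coproduct ($\Delta(H_{\mu})\subseteq\bigoplus_{\nu\sigma=\mu}H_{\nu}\otimes H_{\sigma}$) and with the counit ($\varepsilon$ vanishes off $H_{1}$) because $\rho$ is a bialgebra map. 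Moreover, $*\rho*=\rho^{-1}$ and the conjugate--linearity of $*$ show that $*$ maps $H_{\mu}$ onto $H_{\mu^{-1}}$, whence $\rho^{t}\circ*=*\circ\rho^{-t}$. Setting $T:=\rho^{-1/2}$, a Hopf algebra automorphism, we obtain
\[
T\circ*=\rho^{-1/2}\circ*=*\circ\rho^{1/2}=(*\circ\rho)\circ\rho^{-1/2}=\#\circ\rho^{-1/2}=\#\circ T .
\]

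I expect the crux to be the second paragraph: turning the bare existence of invariant inner products for the two involutions into the algebraic statement that $\rho=*\#$ is a positive self--adjoint operator (together with the accompanying bookkeeping in the third paragraph, that its real powers remain Hopf algebra automorphisms). Everything else is formal manipulation of the identities $*^{2}=\#^{2}=\id$, $\rho=*\#$ and $\varphi*=\varphi\#=\overline\varphi$.
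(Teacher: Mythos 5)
Your proof is correct. One caveat on the comparison: the paper gives no argument for this statement at all --- it is quoted from \cite{kn:andrus}, Theorem 2.6 --- so there is no internal proof to measure yours against; what you wrote is essentially the standard argument behind the cited result (a polar-decomposition-type construction: $\rho=*\circ\#$ is a Hopf algebra automorphism, self-adjoint and positive for $\langle\ ,\ \rangle_\varphi$, and $T=\rho^{-1/2}$ does the job). All the ingredients you invoke from the paper are indeed available there: uniqueness of the normal integral, $\varphi*=\overline\varphi=\varphi\#$, and Theorem \ref{teo:andrus1} giving positive definiteness of both forms; your identities $*\rho*=\rho^{-1}$, $\#=*\rho$, the eigenspace grading $H=\bigoplus_\mu H_\mu$ being compatible with product, coproduct and counit, the fact that $*$ sends $H_\mu$ to $H_{\mu^{-1}}$ (the eigenvalues being real and positive), and the final computation $T*=\#T$ all check out. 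The one ingredient not contained in the paper is the orthogonality of distinct simple subcoalgebras with respect to $\langle\ ,\ \rangle_\varphi$ (Schur/Peter--Weyl orthogonality for the Haar integral), which you need in order to conclude that the positive operator $\rho$ preserves each $C_\lambda$ before taking fractional powers; this is standard for cosemisimple Hopf algebras but should be cited or proved. Alternatively, you could bypass it by noting that $\rho(C_\lambda)=\big((C_\lambda)^\#\big)^{*}=\ant^2(C_\lambda)$ by Remark \ref{rk:nos}, together with the CQG fact that $\ant^2$ fixes every simple subcoalgebra, and then only use your positivity argument on each finite-dimensional block.
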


\subsection{Duality for finite compact quantum groups.}

Let $H$ be a finite CQG.
First we observe that the relations $\ant^2=\id$ and $\ant *\ant *=\id$, imply $\ant *=*\ant $.
\textit{i.e.} the antipode is a $*$-map.

In this situation the dual Hopf algebra $H^{\vee}$ has a conjugate linear involution defined by
\begin{equation} \label{eq:*-dual}
\alpha^*(x)=\overline{\alpha(\ant(x)^*)}= \overline{\alpha(\ant(x^*))},\quad \forall \alpha\in H^{\vee},\ x\in H.
\end{equation}
It is clear that $H^{\vee}$ equipped with this involution is a $*$-Hopf algebra and that
the functor $H\mapsto H^{\vee}$ is a  contravariant automorphism in the category of finite
$*$-Hopf algebras. Moreover, the cannonical injection $H \rightarrow H^{\vee\vee}$ is an isomorphism of $*$-Hopf algebras.

\begin{lema} \label{lema:dual-finite-cqg}
If $H$ is a finite $*$-Hopf algebra, then $H$ is a CQG if and only if $H^{\vee}$ is a CQG.
\end{lema}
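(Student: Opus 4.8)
The plan is to exploit the self-duality of the situation. As recalled above, $H\mapsto H^{\vee}$ is a contravariant automorphism of the category of finite $*$-Hopf algebras and the canonical map $H\to H^{\vee\vee}$ is an isomorphism of $*$-Hopf algebras, so it suffices to prove the single implication that if $H$ is a CQG then so is $H^{\vee}$ (the converse follows by applying this to $H^{\vee}$ and transporting back along $H^{\vee\vee}\cong H$). Thus, assume $H$ is a finite CQG. By Theorem \ref{teo:semisimple}, $H$ is semisimple and cosemisimple, $\ant^{2}=\id$ and $\ant*=*\ant$; dualizing the first of these, $\ant_{H^{\vee}}^{2}=\id$, so $H^{\vee}$ is a cosemisimple $*$-Hopf algebra as well. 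By Theorem \ref{teo:andrus1}, it then remains to prove that the normal integral $\psi$ of $H^{\vee}$ satisfies $\psi(\alpha^{*}\star\alpha)>0$ for every $\alpha\in H^{\vee}$, $\alpha\ne0$.

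The first step is to pin down $\psi$. Let $t\in H$ be the normal integral of $H$ (the unique element with $\varepsilon(t)=1$ and $xt=tx=\varepsilon(x)t$) and let $\varphi\in H^{\vee}$ be the normal integral of $H$. Using only that the comultiplication of $H^{\vee}$ is dual to the product of $H$, one checks directly that $\mathrm{ev}_{t}\in H^{\vee\vee}$ is a two-sided integral on $H^{\vee}$ with $\mathrm{ev}_{t}(\varepsilon_{H})=1$; by uniqueness of the normal integral, $\psi=\mathrm{ev}_{t}$. Hence the goal reduces to showing $(\alpha^{*}\star\alpha)(t)>0$ whenever $\alpha\ne0$.

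The heart of the argument is to transport the inner product $\langle\,\,,\,\rangle_{\varphi}$ of $H$ to $H^{\vee}$ along the conjugate-linear bijection $\Psi\colon H\to H^{\vee}$ defined by $\Psi(x)(y)=\varphi(x^{*}y)=\langle y,x\rangle_{\varphi}$; it is a bijection because $\varphi$ is nondegenerate (recall $H\rightharpoonup\varphi=H^{\vee}$). The key claim is the identity
\begin{equation*}
\langle\Psi(x),\Psi(y)\rangle_{\psi}=(\dim H)^{-1}\,\varphi(x^{*}y),\qquad x,y\in H.
\end{equation*}
To prove it I would argue as follows. First, from $\varphi*=\overline{\varphi}$, $\ant*=*\ant$, $\ant^{2}=\id$ and $\varphi\ant=\varphi$ one rewrites $\Psi(y)^{*}$ as the functional $z\mapsto\varphi(\ant(y)z)$; consequently
\begin{equation*}
\langle\Psi(x),\Psi(y)\rangle_{\psi}=\psi\bigl(\Psi(y)^{*}\star\Psi(x)\bigr)=\sum\varphi\bigl(\ant(y)t_{1}\bigr)\,\varphi\bigl(x^{*}t_{2}\bigr).
\end{equation*}
Then, applying in turn the trace identity $\varphi(ab)=\varphi(ba)$ (available since $H$ is semisimple), the symmetry $\sum t_{1}\otimes t_{2}=\sum t_{2}\otimes t_{1}$, the antipode formula $\ant(z)=(\dim H)\sum\varphi(t_{1}z)t_{2}$ with $z=x^{*}$, the anti-multiplicativity of $\ant$, and $\varphi\ant=\varphi$, one collapses the double sum to $(\dim H)^{-1}\varphi(x^{*}y)$. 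Taking $y=x$ gives $\langle\Psi(x),\Psi(x)\rangle_{\psi}=(\dim H)^{-1}\varphi(x^{*}x)$, which is strictly positive for $x\ne0$ because $H$ is a CQG; since $\Psi$ is onto, this means exactly that $\psi(\alpha^{*}\star\alpha)>0$ for every nonzero $\alpha\in H^{\vee}$, so $H^{\vee}$ is a CQG by Theorem \ref{teo:andrus1}.

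The only nontrivial part is the string of identities collapsing $\sum\varphi(\ant(y)t_{1})\varphi(x^{*}t_{2})$; the rest is routine bookkeeping with the duality dictionary. The point requiring care is to use throughout the normalized integrals $t$ and $\varphi$, and to be sure that the form occurring in Theorem \ref{teo:andrus1} applied to $H^{\vee}$ is exactly the one built from $\psi=\mathrm{ev}_{t}$.
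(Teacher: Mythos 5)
Your proof is correct and follows essentially the same route as the paper's: both identify the relevant form on $H^{\vee}$ as the one coming from evaluation at the integral $t$, pull it back along a translation-by-$\varphi$ bijection $H\to H^{\vee}$, establish the rescaling identity with $\frac{1}{\dim H}\langle\,\,,\,\rangle_{\varphi}$ using the same list of facts from Theorem \ref{teo:semisimple} (trace property, symmetry of $\Delta(t)$, the antipode formula, $\varphi\ant=\varphi$), and conclude via Theorem \ref{teo:andrus1}. The only cosmetic differences are that you use the conjugate-linear map $x\mapsto\varphi(x^{*}\,\cdot\,)=\varphi\leftharpoonup x^{*}$ where the paper uses the linear $x\mapsto x\rightharpoonup\varphi$, and that you reduce to one implication via $H\cong H^{\vee\vee}$, whereas the paper's single identity yields both implications at once.
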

\dem
From Theorem \ref{teo:semisimple} it follows that $H$ is semisimple if and only if $H^{\vee}$ is semisimple. Assume the semisimplicity hypothesis and denote $t \in H$ and $\varphi \in H^\vee$ as before.

If $n=\dim H$, then:
\begin{equation*}
\langle x\rightharpoonup\varphi, y\rightharpoonup\varphi\rangle_t=
\frac{1}{n}\langle x, y\rangle_\varphi,\quad \forall x,y\in H
\end{equation*}

Indeed,
\begin{align*}
\langle x\rightharpoonup\varphi, y\rightharpoonup\varphi\rangle_t
&=
\big( (y\rightharpoonup\varphi)^*(x\rightharpoonup\varphi) \big)(t)
=
\sum (y\rightharpoonup\varphi)^*(t_1)(x\rightharpoonup\varphi)(t_2)
=
\sum \overline{\varphi(\ant(t_1)^* y)}\, \varphi(t_2x) \\
&=
\sum \varphi\big((\ant(t_1)^* y)^*\big)\, \varphi(t_2x)
=
\sum \varphi(y^*\ant(t_1))\, \varphi(t_2 x)
=
\sum \varphi(y^*\ant(t_2))\, \varphi(t_1 x)\\
&=
\varphi \left (y^*\ant \left (\sum \varphi(t_1 x)t_2\right ) \right)
=
\varphi \left (y^*\ant\left (\frac{1}{n}\ant(x)\right )\right )
=
\frac{1}{n}\langle x, y\rangle_\varphi.
\end{align*}

Theorem \ref{teo:semisimple} guarantees that $H\rightharpoonup\varphi=H^{\vee}$, hence in view of the above equality it is
clear that $\langle -, - \rangle_t$ is positive definite if and only if the same holds for $\langle -, - \rangle_\varphi$,
and then Theorem \ref{teo:andrus1} yields the result.
\qed

\begin{example} \label{ex:grupos}
If $G$ is a finite group, then the group algebra $\C G$ is a CQG with
the structure $g^*=g^{-1}$, for all $g\in G$.
Hence the dual Hopf algebra $\C^G$ with dual basis $\{e_g:\ g\in G\}$
is a CQG with respect to $(e_g)^*=e_g$, for all $g\in G$.
Theorem \ref{teo:andrus2} implies that the above are the unique possible CQG structures in $\C G$ and $\C^G$.
\end{example}

\begin{rema}\label{rem:*representation}
Let $H$ be a finite $*$-Hopf algebra and $V$ a left $H$-comodule with structure $v \mapsto \sum v_{-1} \otimes v_0$. Then
$V$ is a left $H^{\vee}$-module  and the action and coaction are related by the equality:
$\ant(\alpha)\cdot v=\sum \alpha(v_{-1}) v_0$, for all $\alpha\in H^\vee$, $v\in V$.
If $\langle\,\, ,\, \rangle$  is an inner product on $V$, it is easy to see (using that $\ant$ is bijective)
that $(V,\langle\,\, ,\, \rangle)$ is an unitary $H$-comodule
if and only if the representation of $H^\vee$ in $V$ is a $*$-representation --the inner product in that case is said to
be \emph{invariant} for the action--, \textit{i.e.}
\begin{equation*}
\langle \alpha\cdot u ,v \rangle = \langle  u ,\alpha^*\cdot v \rangle,\quad \forall \alpha\in H^\vee,\ u,v\in V.
\end{equation*}
\begin{comment}
Indeed,
\begin{align*}
\sum\ant^{-1}(u_{-1})\langle u_0,v\rangle
&=
\sum v_{-1}^*\langle u,v_0\rangle, \quad \forall u,v\in V \quad \Leftrightarrow \\
\sum \alpha\left( \ant^{-1}(u_{-1}) \right) \langle u_0,v\rangle
&=
\sum \alpha\left( v_{-1}^* \right)\langle u,v_0\rangle,
\quad \forall \alpha\in H^\vee,\ u,v\in V \quad \Leftrightarrow \\
\left\langle \sum \ant^{-1}(\alpha)(u_{-1})   u_0,v \right\rangle
&=
\left\langle u, \sum \overline{\alpha\left( v_{-1}^* \right)}v_0\right\rangle,
\quad \forall \alpha\in H^\vee,\ u,v\in V \quad \Leftrightarrow \\
\left\langle \sum \ant^{-1}(\alpha)(u_{-1})   u_0,v \right\rangle
&=
\left\langle u, \sum \ant^{-1}\left( \alpha^* \right)(v_{-1}) v_0\right\rangle,
\quad \forall \alpha\in H^\vee,\ u,v\in V \quad \Leftrightarrow \\
\left\langle \ant\left( \ant^{-1}(\alpha)\right)\cdot u ,v \right\rangle
&=
\left\langle u ,\ant\left( \ant^{-1}(\alpha^*)\right)\cdot v \right\rangle ,
\quad \forall \alpha\in H^\vee,\ u,v\in V \quad \Leftrightarrow \\
\langle \alpha \cdot u ,v \rangle &= \langle u ,\alpha^*\cdot v \rangle,
\quad \forall \alpha\in H^\vee,\ u,v\in V.
\end{align*}
\end{comment}
\end{rema}
From the above Observation and Lemma \ref{lema:dual-finite-cqg}, we conclude:

\begin{lema} \label{lema:equivalencias}
Let $H$ be a semisimple $*$-Hopf algebra. Then the following assertions are equivalent:
\begin{enumerate}
\item
$H$ is a CQG, \textit{i.e.} for every right (left) $H$-comodule $V$ there exists an inner product
$\langle\,\, ,\, \rangle$ on $V$ such that
\begin{align}
\sum \langle u_0,v\rangle \ant(u_1) &= \sum \langle u,v_0\rangle v_1^*,\quad
\forall u,v\in V.\label{eq:right-inv-V} \\
\sum\ant(u_{-1})\langle u_0,v\rangle &= \sum v_{-1}^*\langle u,v_0\rangle, \quad \forall u,v\in V.\label{eq:left-inv-V}
\end{align}

\item For every left (right) $H$-module $V$ there exists an inner product
$\langle\,\, ,\, \rangle$ on $V$ such that

\begin{align}
\langle x\cdot u ,v \rangle &= \langle  u ,x^*\cdot v \rangle, \quad \forall u,v\in V.\label{eq:*-rep} \\
\langle u\cdot x ,v \rangle &= \langle  u ,v\cdot x^* \rangle, \quad \forall u,v\in V.\label{eq:*-rep-right}
\end{align}
\qed
\end{enumerate}
\end{lema}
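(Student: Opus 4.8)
The plan is to deduce the equivalence from Lemma \ref{lema:dual-finite-cqg} together with the module--comodule duality available for finite-dimensional Hopf algebras. Since a semisimple Hopf algebra is automatically finite-dimensional (Section \ref{section:prelim}), $H$ and $H^\vee$ are finite $*$-Hopf algebras, the canonical map $H\to H^{\vee\vee}$ is an isomorphism of $*$-Hopf algebras, and by Theorem \ref{teo:semisimple} $H^\vee$ is again semisimple. Being $H$ finite-dimensional, left $H$-comodules are exactly left $H^\vee$-modules, and symmetrically left $H^\vee$-comodules are exactly left $H^{\vee\vee}\cong H$-modules, with the same underlying space and the same candidate inner products.

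First I would apply Remark \ref{rem:*representation} with $H^\vee$ in the role of $H$: for a left $H^\vee$-comodule $V$, equivalently a left $H$-module, an inner product on $V$ satisfies the left-invariance identity \eqref{eq:left-inv-V} for the $H^\vee$-coaction if and only if it makes the corresponding left $H$-action a $*$-representation, i.e. \eqref{eq:*-rep} holds. Combining this with Lemma \ref{lema:dual-finite-cqg} and with the fact (established after Theorem \ref{teo:andrus1}) that a $*$-Hopf algebra is a CQG precisely when every left comodule carries an invariant inner product, one gets the chain: $H$ is a CQG $\iff$ $H^\vee$ is a CQG $\iff$ every left $H^\vee$-comodule admits an invariant inner product $\iff$ every left $H$-module admits an inner product satisfying \eqref{eq:*-rep}. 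This handles the ``left'' versions of (1) and (2); the ``right'' version of (1) is equivalent to it by the discussion preceding Theorem \ref{teo:andrus2} (passing through $H^{\text{cop}}$).

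It remains to treat the right-module condition \eqref{eq:*-rep-right}. A right $H$-module $V$ is the same as a left $H^{\text{op}}$-module, and under this identification \eqref{eq:*-rep-right} turns into \eqref{eq:*-rep} for that left $H^{\text{op}}$-module. Since $H$ semisimple implies $H^{\text{op}}$ semisimple, and $H$ a CQG implies $H^{\text{op}}$ a CQG (the Remark after Corollary \ref{coro:cqg-cerito}), applying the left-module equivalence already obtained to $H^{\text{op}}$ yields the right-module statement. The only delicate point is bookkeeping: one must check that the $*$-operation, semisimplicity and the CQG property all pass to $H^\vee$ and to $H^{\text{op}}$, and that the coaction-invariance identity for $H^\vee$ translates exactly into \eqref{eq:*-rep} --- but this last translation is precisely the computation recorded (in commented form) inside Remark \ref{rem:*representation}, so no genuinely new argument is needed; the content of the lemma is really a repackaging of Lemma \ref{lema:dual-finite-cqg} and Remark \ref{rem:*representation}.
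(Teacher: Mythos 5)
Your argument is correct and is essentially the paper's own: the lemma is stated there as a direct consequence of Remark \ref{rem:*representation} (applied through the module--comodule duality for the finite dual) and Lemma \ref{lema:dual-finite-cqg}, with the left/right reductions handled exactly as you do via $H^{\cop}$ and $H^{\op}$. Your write-up just makes explicit the bookkeeping (finiteness from semisimplicity, $H\cong H^{\vee\vee}$ as $*$-Hopf algebras, transfer of the CQG property) that the paper leaves implicit.
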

In the preceeding subsection we proved  that the validity of \eqref{eq:right-inv-V} or \eqref{eq:left-inv-V} for all
comodules are  equivalent conditions, and the same is true regarding \eqref{eq:*-rep} and \eqref{eq:*-rep-right}.
Moreover, these last two equalities  are consistent with the equalities appearing in
\eqref{H-*-representacion-regular} and \eqref{H-*-representacion-regular2}.

\begin{rema} \label{rem:inv-prod-tens}
If $H$ is a finite CQG and $V, W$ are right $H$-comodules with invariant inner products $\langle\,\, ,\, \rangle_V$ and
$\langle\,\, ,\, \rangle_W$, then it is clear that the inner product
defined on $V\otimes W$ by the formula:
\[
\langle v\otimes w,v'\otimes w'\rangle =\langle v,v'\rangle_V\langle w, w'\rangle_W
\]
is invariant with respect to the diagonal right $H$-comodule structure on $V\otimes W$.
Obviously an analogous result is valid for $H$--modules and also changing right for left.
\end{rema}
\section{Compactness of Drinfel'd twists}\label{section:compactness}

In this section we study how compactness behaves under performing a Drinfel'd twist to a $*$--Hopf algebra. We choose to develop the basic results in the case of twisting the product with a cocycle, the case of twisting the coproduct with a cococycle being similar.
\subsection{Twisting products}

Consider a Hopf algebra $\mathcal H = (H,\Delta,\varepsilon,m,u,\mathcal S)$ with invertible antipode.

Recall that a cocycle is a (convolution) invertible element $\chi: H \otimes H \rightarrow \K$ that satisfies the following properties. 

\begin{equation}\label{equation:cococycle1}
(\chi \otimes \varepsilon) \star \chi(m \otimes \operatorname{id}) = (\varepsilon \otimes \chi) \star \chi(\operatorname{id} \otimes m)
\end{equation}
\begin{equation}\label{equation:cococycle2}
\chi(1,x) = \chi(x,1)=\varepsilon(x)
\end{equation}

The fist equation above becomes:

\begin{equation}\label{equation:cococycle4}
\sum \chi(x_1,y_1)\chi(x_2y_2,z)=\sum \chi(y_1,z_1)\chi(x,y_2z_2)
\end{equation}

Taking inverses one easily deduces that:

\begin{equation}\label{equation:cococycle5}
\chi^{-1}(m \otimes \operatorname{id}) \star (\chi^{-1} \otimes \varepsilon)  = \chi^{-1}(\operatorname{id} \otimes m) \star (\varepsilon \otimes \chi^{-1})
\end{equation}
\begin{equation}\label{equation:cococycle6}
\chi^{-1}(1,x) = \chi^{-1}(x,1)=\varepsilon(x).
\end{equation}

Equation (\ref{equation:cococycle5}) above can easily be transformed into:

\begin{equation}\label{equation:cocycle8}
\sum \chi(x_1,y_1z_1)\chi^{-1}(x_2y_2,z_2)=\sum
\chi^{-1}(y_1,z)\chi(x,y_2).
\end{equation}

\begin{obse}
It is well known that if we let $\mathcal H_{\chi}= (H,\Delta,\varepsilon,m_\chi,u,\mathcal S_\chi)$ where $m_\chi=\chi \star m \star \chi^{-1}$  and $\mathcal S_\chi= \kappa \star \mathcal S \star \kappa^{-1}$ with $\kappa=\chi(\operatorname{id} \otimes \mathcal S)\Delta$ and $\kappa^{-1}(x)=\chi^{-1}(\mathcal S \otimes \operatorname{id})\Delta$, then  $\mathcal H_\chi$ is a Hopf algebra (see V. G. Drinfel'd,\cite{kn:dri}).
\end{obse}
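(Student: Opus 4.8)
The plan is to check in turn the bialgebra axioms and then the antipode axiom for $\mathcal H_\chi$, working throughout in Sweedler notation; the element-wise form of $m_\chi=\chi\star m\star\chi^{-1}$ (with $\chi$ and $\chi^{-1}$ regarded as maps $H\otimes H\to\K\cdot 1_H$) is $x\cdot_\chi y=\sum\chi(x_1,y_1)\,\chi^{-1}(x_3,y_3)\,x_2y_2$.

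I would first handle the parts that use only the convolution-invertibility of $\chi$ and not the cocycle condition. Unitality of $1$ for $m_\chi$ is immediate from the normalizations $\chi(1,x)=\chi(x,1)=\varepsilon(x)$ of \eqref{equation:cococycle2} and from \eqref{equation:cococycle6} for $\chi^{-1}$. That $\varepsilon$ remains multiplicative follows from the convolution identity $\sum\chi^{-1}(a_1,b_1)\chi(a_2,b_2)=\varepsilon(a)\varepsilon(b)=\sum\chi(a_1,b_1)\chi^{-1}(a_2,b_2)$, and that $\Delta$ remains multiplicative, that is $\Delta(x\cdot_\chi y)=\Delta(x)\cdot_\chi\Delta(y)$ in $\mathcal H_\chi\otimes\mathcal H_\chi$, follows from the same identity applied to the middle pair of scalar factors, once one uses coassociativity, the centrality in $H$ of the scalars $\chi(a,b)$ and $\chi^{-1}(a,b)$, and the multiplicativity of the original $\Delta$. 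Thus $\mathcal H_\chi$ is a bialgebra as soon as $m_\chi$ is associative.

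Associativity is the substantial point. Expanding $(x\cdot_\chi y)\cdot_\chi z$ and $x\cdot_\chi(y\cdot_\chi z)$ and re-expanding the resulting coproducts of products by multiplicativity of the original $\Delta$, one finds that each side equals a single Sweedler sum whose scalar part is a product of two values of $\chi$ and two of $\chi^{-1}$. On the two $\chi$-factors one then applies the cocycle identity \eqref{equation:cococycle4}, and on the two $\chi^{-1}$-factors the derived identity \eqref{equation:cocycle8} (equivalently \eqref{equation:cococycle5}); any residual adjacent $\chi\chi^{-1}$ pair is cancelled via $\sum\chi(a_1,b_1)\chi^{-1}(a_2,b_2)=\varepsilon(a)\varepsilon(b)$, and after relabelling the Sweedler indices the two expressions coincide. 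I expect this to be the main obstacle, not for any conceptual reason but because \eqref{equation:cococycle4} and \eqref{equation:cocycle8} must be applied to precisely the right coproduct components, so the bookkeeping has to be organized with care; this is also the one place where the cocycle hypothesis genuinely enters.

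Finally I would verify that $\mathcal S_\chi$ is an antipode for $\mathcal H_\chi$. Written out, $\mathcal S_\chi(x)=\sum\chi(x_1,\mathcal S(x_2))\,\mathcal S(x_3)\,\chi^{-1}(\mathcal S(x_4),x_5)$, and one must show $\sum\mathcal S_\chi(x_1)\cdot_\chi x_2=\varepsilon(x)1=\sum x_1\cdot_\chi\mathcal S_\chi(x_2)$, where the convolution is now taken with respect to $m_\chi$. Substituting these formulas, expanding $\cdot_\chi$ and collecting the scalar factors, one repeatedly applies \eqref{equation:cococycle4} and \eqref{equation:cocycle8} together with the antipode identities $\sum\mathcal S(x_1)x_2=\varepsilon(x)1=\sum x_1\mathcal S(x_2)$ of $H$ (the standing bijectivity of $\mathcal S$ also enters); the scalar factors then telescope and both convolutions collapse to $\varepsilon(x)1$. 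As with associativity the only difficulty here is the length of the computation, not any genuine subtlety; since the statement is classical, in the paper one would simply cite \cite{kn:dri} rather than reproduce it.
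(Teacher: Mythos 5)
The paper offers no proof of this Observation at all: it is stated as a known result and simply referred to Drinfel'd \cite{kn:dri} (in this ``twist the product by a cocycle'' form the statement is the dual of Drinfel'd's coproduct twist and is usually attributed to Doi and Doi--Takeuchi; Majid's book also contains it). So there is no argument in the paper to compare yours with, and your closing remark that one would normally just cite the literature is exactly what the authors do. As a plan for the direct verification your sketch is sound and follows the standard route: unit, counit and comultiplicativity of $m_\chi$ need only convolution invertibility and the normalization \eqref{equation:cococycle2}, \eqref{equation:cococycle6}; associativity is where \eqref{equation:cococycle4} and its inverted form \eqref{equation:cocycle8} (equivalently \eqref{equation:cococycle5}) enter, applied to the two $\chi$-factors and the two $\chi^{-1}$-factors respectively (if you line the Sweedler components up correctly there is in fact no residual $\chi\chi^{-1}$ pair left to cancel); and the antipode axiom for $\mathcal S_\chi$ is a longer but routine computation of the same kind. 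The one ingredient your outline passes over is the verification that the map called $\kappa^{-1}$, namely $x\mapsto\sum\chi^{-1}(\mathcal S x_1,x_2)$, really is the convolution inverse of $\kappa(x)=\sum\chi(x_1,\mathcal S x_2)$; this is implicitly asserted by the formula $\mathcal S_\chi=\kappa\star\mathcal S\star\kappa^{-1}$ and is used when you collapse scalar factors in the antipode computation. It is again a consequence of the cocycle identity (the same mechanism the paper exploits later for $\gamma$ and $\gamma^{-1}$ in the Singer-pair section), so this is a small omission rather than an error, but a complete write-up should include it.
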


More explicitly we have that: $m_\chi(x,y)= x \cdot_\chi y= \sum \chi(x_1,y_1)x_2y_2\chi^{-1}(x_3,y_3)$, \\$ \mathcal S_\chi(x)= \sum \chi(x_1,\mathcal Sx_2) \mathcal S(x_3)\chi^{-1}(\mathcal Sx_4,x_5)$, and $\kappa(x)= \sum \chi(x_1,\mathcal Sx_2), \kappa^{-1}(x)=\sum \chi^{-1}(\mathcal Sx_1,x_2)$.
\medskip

\subsection{Twisting $*$--Hopf algebras}

In this subsection we review well known results concerning twistings and $*$--structures on a Hopf algebra defined over $\C$.

We consider the case that we twist the product with a cocycle $\chi:H \otimes H \rightarrow \C$.

In this situation, we want to find conditions on $\chi$ that guarantee that the same $*$--structure is compatible with the new product.

We have:
\[(x \cdot_\chi y)^*=\sum \overline{\chi(x_1,y_1)}y_2^* x_2^* \overline{\chi^{-1}(x_3,y_3)}.\]

Also
\[y^* \cdot_\chi x^*=\sum {\chi(y_1^*,x_1^*)}y_2^* x_2^* {\chi^{-1}(y_3^*,x_3^*)}.\]

Hence, a reasonable hypothesis to work with is to ask $\chi$ to be a $*$--cococycle, compare with (\cite{kn:Majid}).

\begin{defi} We say that the cocycle $\chi$ is a $*$--cocycle provided that the condition
\begin{equation}\label{equation:twistandstar}
\chi(x^*,y^*)= \overline{\chi(y,x)},
\end{equation}
is satisfied.
\end{defi}
\begin{obse} It is clear that in case that $\chi$ is an invertible $*$--cocycle, then $\chi^{-1}$ is also a $*$--cocycle, and then $\mathcal H_\chi$ is also a $*$--Hopf algebra with the same involution that $\mathcal H$ (compare with \cite{kn:Majid}).
\end{obse}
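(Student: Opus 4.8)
The plan is to split the statement into two parts: first, that $\chi^{-1}$ again satisfies the $*$-identity \eqref{equation:twistandstar}; and second, that with this in hand the original involution $*$ makes $\mathcal H_\chi$ into a $*$-Hopf algebra with the same $*$. For the first part I would work inside the convolution algebra $\Hom(H\otimes H,\C)$. Let $\theta\colon H\otimes H\to H\otimes H$ be the conjugate-linear map $\theta(x\otimes y)=y^*\otimes x^*$. Because $*$ is a comultiplicative involution of $H$ (so that $\Delta(x^*)=\sum x_1^*\otimes x_2^*$, $x^{**}=x$ and $\varepsilon(x^*)=\overline{\varepsilon(x)}$), the map $\theta$ is an involutive coalgebra automorphism of the tensor-product coalgebra $H\otimes H$ which intertwines its counit with the complex conjugate. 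Consequently the assignment $f\mapsto f^{\dagger}:=\overline{f\circ\theta}$, that is $f^{\dagger}(x,y)=\overline{f(y^*,x^*)}$, is a conjugate-linear, involutive, unital algebra automorphism of $(\Hom(H\otimes H,\C),\star)$; the only thing to verify is the identity $(f\star g)^{\dagger}=f^{\dagger}\star g^{\dagger}$, which is a one-line Sweedler computation using comultiplicativity of $*$.

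Now notice that \eqref{equation:twistandstar} says exactly $\chi^{\dagger}=\chi$. Since $\chi$ is convolution-invertible and $\dagger$ is a unital algebra automorphism, it preserves convolution inverses, so $(\chi^{-1})^{\dagger}=(\chi^{\dagger})^{-1}=\chi^{-1}$; that is, $\chi^{-1}$ again satisfies \eqref{equation:twistandstar}. (The cocycle-type identities for $\chi^{-1}$ are \eqref{equation:cococycle5}--\eqref{equation:cococycle6}, already recorded above; these together with \eqref{equation:twistandstar} for $\chi^{-1}$ are all that will be used below.)

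For the second part, $\mathcal H_\chi$ is a Hopf algebra by the Observation quoted from Drinfel'd, and $*$ is still a conjugate-linear involution of the underlying space, comultiplicative for the (unchanged) $\Delta$, with $1^*=1$ and $\varepsilon*=\overline{\varepsilon}$; the only $*$-bialgebra axiom left to check is that $*$ is antimultiplicative for the new product $m_\chi$. This is precisely the comparison of the two displayed formulas above: from $x\cdot_\chi y=\sum\chi(x_1,y_1)\,x_2y_2\,\chi^{-1}(x_3,y_3)$ one obtains $(x\cdot_\chi y)^*=\sum\overline{\chi(x_1,y_1)}\,y_2^*x_2^*\,\overline{\chi^{-1}(x_3,y_3)}$ using that the old $*$ is antimultiplicative and comultiplicative, while $y^*\cdot_\chi x^*=\sum\chi(y_1^*,x_1^*)\,y_2^*x_2^*\,\chi^{-1}(y_3^*,x_3^*)$; applying \eqref{equation:twistandstar} for $\chi$ to the leftmost scalar factor and \eqref{equation:twistandstar} for $\chi^{-1}$ (from the previous paragraph) to the rightmost one, the two right-hand sides coincide. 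Hence $(\mathcal H_\chi,*)$ is a $*$-bialgebra, and since it is a Hopf algebra it is a $*$-Hopf algebra; the relation $(\mathcal S_\chi *)^2=\id$ then holds automatically, as recalled in Section~\ref{section:prelim}.

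The one genuinely delicate point is the bookkeeping in the first paragraph: one must use that $*$ is comultiplicative rather than anticomultiplicative, so that the transposition built into $\theta$ gets compensated and $\theta$ is a coalgebra morphism --- which is what makes $\dagger$ an algebra automorphism rather than an anti-automorphism. Once the identity $(f\star g)^{\dagger}=f^{\dagger}\star g^{\dagger}$ is in place, passing to convolution inverses is immediate and everything else is routine manipulation of Sweedler indices.
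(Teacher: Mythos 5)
Your proposal is correct and follows the same route the paper (implicitly) takes: the paper's two displayed formulas for $(x \cdot_\chi y)^*$ and $y^* \cdot_\chi x^*$ are exactly your comparison step, with the $*$--cocycle identity applied to $\chi$ and to $\chi^{-1}$. Your packaging of the assertion that $\chi^{-1}$ is again a $*$--cocycle via the conjugate-linear unital convolution automorphism $f\mapsto f^{\dagger}$ is a clean way of writing out the routine check the paper leaves to the reader, and the rest (comultiplicativity of $*$, the Hopf axiom being automatic) matches the paper's setup.
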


\subsection{Twisting compact quantum groups}

To study the preservation of compactness by Drinfel'd twists we need to consider the positivity of the integral.

If $\varphi$ is a normal left integral in $\mathcal H$, as the cocycle twist does not affect the coproduct, 
$\varphi$ is also a normal integral for $\mathcal H_\chi$.

Hence in this case, assuming that the cocycle satisfies the additional property (\ref{equation:twistandstar}), and considering the two existing products on $H$ one can define two hermitian forms in $H$ as follows:
$\langle x,y \rangle = \varphi(y^* x)$ and $[x,y] = \varphi(y^*\cdot_\chi x)$.

To perform our computations we need the following two formul\ae\/  involving $*$ and $\langle \,\, ,\,\, \rangle$ (see for example \cite{kn:andres-walter-mariana} or section \ref{cqg}).

\begin{equation}\label{equation:1}
\sum \langle x_1,y \rangle \mathcal S(x_2)= \sum \langle x,y_1\rangle y_2^*
\hspace*{.25cm} , \hspace*{.25cm} \sum {\mathcal S}^{-1}(x_1)\langle x_2,y \rangle = \sum y_1^*\langle x,y_2\rangle.
\end{equation}

One has that:

\begin{equation*}[x,y]=\varphi(y^* \cdot_\chi x)= \sum \chi(y_1^*,x_1) \varphi(y_2^* x_2)\chi^{-1}(y_3^*,x_3).
\end{equation*}
Also
\begin{equation*}
\sum \chi(y_1^*,x_1) \langle x_2,y_2 \rangle \chi^{-1}(y_3^*,x_3)
=\sum \chi(y_1^*,x_1) \langle x_2,y_2 \rangle \chi^{-1}(\mathcal S x_3,x_4)
=\sum \chi({\mathcal S}^{-1}x_2,x_1) \langle x_3,y \rangle \chi^{-1}(\mathcal S x_4,x_5),
\end{equation*}
where for the last two equalities above we have used 
equations (\ref{equation:1}).

Similarly
\begin{equation*}
\sum \chi(y_1^*,x_1) \langle x_2,y_2 \rangle \chi^{-1}(y_3^*,x_3)
=\sum \chi(y_1^*, \mathcal S y_2^*) \langle x, y_3 \rangle \chi^{-1}(y_5^*, {\mathcal S}^{-1} y_4^*)
=\overline{\chi}({\mathcal S}^{-1}y_2,y_1)\langle x,y_3\rangle\overline{\chi^{-1}}(\mathcal Sy_4,y_5).
\end{equation*}

Consider the map $\Phi:H \rightarrow H$ defined as: $\Phi(x)=\sum \chi({\mathcal S}^{-1}x_2,x_1) x_3 \chi^{-1}(\mathcal S x_4,x_5)$.

We have shown that:
\begin{equation}\label{equation:positive}
[x, y] = \langle \Phi(x), y \rangle = \langle x, \Phi(y) \rangle.
\end{equation}
Concerning the map $\Phi$ one has the following.

\begin{obse}\label{obse:Phiantipode}
\begin{enumerate}
\item
As $\kappa({\mathcal S}^{-1}x)= \sum \chi({\mathcal S}^{-1}(x_2),x_1)$, the map $\Phi$ can be written as: $\Phi = (\kappa \circ {\mathcal S}^{-1}) \star \operatorname {id} \star \kappa^{-1}$.
Recalling that $\mathcal S_\chi = \kappa \star \mathcal S \star \kappa^{-1}$, we conclude that: $\Phi \star \mathcal S_\chi = (\kappa \circ {\mathcal S}^{-1}) \star \kappa^{-1}$.
Hence, if we call $\zeta= (\kappa \circ {\mathcal S}^{-1}) \star \kappa^{-1}$ we conclude that $\zeta$ is invertible and that $\Phi$ is the convolution inverse of $\mathcal S_\chi \star \zeta^{-1}$.
\item
Moreover, from the definition of $\mathcal S_\chi$ we deduce that: ${\mathcal S}^{-1} \circ \mathcal S_\chi = \kappa \star \operatorname{id} \star \kappa^{-1}$ and comparing with the definition of $\Phi$, we conclude that ${\mathcal S}^{-1}\circ \mathcal S_\chi = \zeta^{-1} \star \Phi$, or equivalently $\Phi = \zeta \star ({\mathcal S}^{-1} \circ \mathcal S_\chi)$.
\item In the particular case that $\zeta = \varepsilon$, i.e. if $\kappa \circ \mathcal S = \kappa$, we deduce that $\Phi = {\mathcal S}^{-1} \circ \mathcal S_\chi$ is the convolution inverse of $\mathcal S_\chi$.

\end{enumerate}
\end{obse}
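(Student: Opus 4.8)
The plan is to read all three assertions as identities inside the convolution algebra $(\operatorname{Hom}(H,H),\star)$ built from the \emph{original} product $m$ and the (unchanged) coproduct $\Delta$, regarding a scalar map $f\in H^\vee$ as the $H$-valued map $x\mapsto f(x)1$; this embedding is an algebra map, so associativity of $\star$ and invertibility of $\kappa,\kappa^{-1}$ can be used freely. The only substantive computation is the Sweedler-notation identity
\[
(\kappa\circ\mathcal{S}^{-1})(x)=\sum\chi(\mathcal{S}^{-1}x_2,x_1),\qquad \forall x\in H .
\]
Since the antipode reverses comultiplication, so does $\mathcal{S}^{-1}$, i.e. $\Delta(\mathcal{S}^{-1}x)=\sum\mathcal{S}^{-1}x_2\otimes\mathcal{S}^{-1}x_1$; substituting this into $\kappa=\chi(\operatorname{id}\otimes\mathcal{S})\Delta$ and cancelling $\mathcal{S}\mathcal{S}^{-1}=\operatorname{id}$ gives the displayed formula. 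Expanding the convolution $(\kappa\circ\mathcal{S}^{-1})\star\operatorname{id}\star\kappa^{-1}$ in Sweedler notation, and inserting this formula together with the one for $\kappa^{-1}$, reproduces verbatim the defining expression $\Phi(x)=\sum\chi(\mathcal{S}^{-1}x_2,x_1)\,x_3\,\chi^{-1}(\mathcal{S}x_4,x_5)$; this is the first assertion of (1).

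For the rest of (1), substitute $\mathcal{S}_\chi=\kappa\star\mathcal{S}\star\kappa^{-1}$ into $\Phi\star\mathcal{S}_\chi$ and collapse the middle using $\kappa^{-1}\star\kappa=\varepsilon$ and the antipode axiom $\operatorname{id}\star\mathcal{S}=u\varepsilon$ of $H$ (with $u\varepsilon$ the convolution unit): one is left exactly with $(\kappa\circ\mathcal{S}^{-1})\star\kappa^{-1}=\zeta$. Invertibility of $\zeta$ is then immediate, since $\mathcal{S}^{-1}$ reversing the coproduct makes $f\mapsto f\circ\mathcal{S}^{-1}$ a convolution anti-automorphism of $H^\vee$, so $\kappa\circ\mathcal{S}^{-1}$ is invertible (inverse $\kappa^{-1}\circ\mathcal{S}^{-1}$) and $\zeta$ is a product of two invertibles; likewise $\operatorname{id}$ is $\star$-invertible with inverse $\mathcal{S}$, so $\Phi=(\kappa\circ\mathcal{S}^{-1})\star\operatorname{id}\star\kappa^{-1}$ is $\star$-invertible. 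Convolving $\Phi\star\mathcal{S}_\chi=\zeta$ on the right by $\zeta^{-1}$ gives $\Phi\star(\mathcal{S}_\chi\star\zeta^{-1})=u\varepsilon$, hence $\Phi=(\mathcal{S}_\chi\star\zeta^{-1})^{-1}$.

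For (2), apply $\mathcal{S}^{-1}$ to $\mathcal{S}_\chi=\kappa\star\mathcal{S}\star\kappa^{-1}$: the scalar factors $\kappa,\kappa^{-1}$ pass through $\mathcal{S}^{-1}$ freely and $\mathcal{S}^{-1}\mathcal{S}=\operatorname{id}$, so $\mathcal{S}^{-1}\circ\mathcal{S}_\chi=\kappa\star\operatorname{id}\star\kappa^{-1}$. Comparing this with the expression for $\Phi$ and using $\kappa\circ\mathcal{S}^{-1}=\zeta\star\kappa$ (convolve the definition of $\zeta$ on the right by $\kappa$) gives $\Phi=\zeta\star(\mathcal{S}^{-1}\circ\mathcal{S}_\chi)$, equivalently $\mathcal{S}^{-1}\circ\mathcal{S}_\chi=\zeta^{-1}\star\Phi$. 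Finally (3) is the specialization $\zeta=\varepsilon$, which is equivalent to $\kappa\circ\mathcal{S}=\kappa$: then (2) gives $\Phi=\mathcal{S}^{-1}\circ\mathcal{S}_\chi$, while (1) gives $\Phi\star\mathcal{S}_\chi=u\varepsilon$, so $\Phi$ is the two-sided $\star$-convolution inverse of $\mathcal{S}_\chi$ (two-sided because $\mathcal{S}_\chi$ is $\star$-invertible). The only delicate point — the ``main obstacle'', such as it is — is bookkeeping: being careful about which maps are scalar-valued versus $H$-valued when forming mixed convolutions, and, in (3), not confusing the original convolution $\star$ (from $m$) with the twisted one (from $m_\chi$), since ``$\mathcal{S}_\chi$ is an antipode'' refers to the latter while ``$\Phi=\mathcal{S}_\chi^{-1}$'' is a statement about the former. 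No deeper difficulty is expected.
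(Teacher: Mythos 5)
Your argument is correct and proceeds exactly as the paper intends: rewrite $\Phi$ as the convolution product $(\kappa\circ\mathcal S^{-1})\star\operatorname{id}\star\kappa^{-1}$ via the identity $\kappa(\mathcal S^{-1}x)=\sum\chi(\mathcal S^{-1}x_2,x_1)$, then use associativity, $\kappa^{-1}\star\kappa=\varepsilon$, the antipode axiom $\operatorname{id}\star\mathcal S=u\varepsilon$, and invertibility of the factors to obtain $\Phi\star\mathcal S_\chi=\zeta$, $\mathcal S^{-1}\circ\mathcal S_\chi=\kappa\star\operatorname{id}\star\kappa^{-1}=\zeta^{-1}\star\Phi$, and the specialization $\zeta=\varepsilon$. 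Your bookkeeping of scalar-valued versus $H$-valued maps and of the untwisted versus twisted convolution is exactly the care the statement requires, and the two-sidedness of the inverse in item (3) is justified since $\mathcal S_\chi=\kappa\star\mathcal S\star\kappa^{-1}$ is itself $\star$-invertible.
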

The next theorem follows directly from the observation above (Observation \ref{obse:Phiantipode}) and equation \eqref{equation:positive}.
\begin{theo}\label{theo:main}
In the hypothesis above, the map $\Phi: H \rightarrow H$ is self adjoint with respect to the original inner product $\langle \,\, ,\,\, \rangle$ in $H$. Moreover, $\mathcal H_\chi$ is a compact quantum group if and only if $\Phi$ is a positive operator, where $\Phi = (\kappa \circ {\mathcal S}^{-1}) \star \operatorname {id} \star \kappa^{-1}$.  
\end{theo}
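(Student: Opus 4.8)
The plan is to read off both statements from equation \eqref{equation:positive} and Observation \ref{obse:Phiantipode}, so that essentially no new computation is required.

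The self-adjointness is immediate: equation \eqref{equation:positive} says that $[x,y]=\langle \Phi(x),y\rangle=\langle x,\Phi(y)\rangle$ for all $x,y\in H$, and the equality $\langle \Phi(x),y\rangle=\langle x,\Phi(y)\rangle$ is by definition the statement that $\Phi$ is self-adjoint with respect to $\langle\,\, ,\,\, \rangle$. The displayed expression $\Phi=(\kappa\circ {\mathcal S}^{-1})\star\operatorname{id}\star\kappa^{-1}$ is merely the reformulation of the defining formula for $\Phi$ carried out in Observation \ref{obse:Phiantipode}(1).

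For the compactness criterion I would argue as follows. Since the cocycle twist leaves the coproduct unchanged, the normal integral $\varphi$ of $\mathcal H$ is still a normal integral for $\mathcal H_\chi$; hence $\mathcal H_\chi$ is again a cosemisimple $*$-Hopf algebra, with the same involution $*$. Theorem \ref{teo:andrus1}, applied to $\mathcal H_\chi$, then says that $\mathcal H_\chi$ is a CQG if and only if its associated invariant Hermitian form $x,y\mapsto\varphi(y^*\cdot_\chi x)$ is an inner product, i.e. is positive definite. That form is exactly $[\,\, ,\,\, ]$, and setting $y=x$ in \eqref{equation:positive} gives $[x,x]=\langle \Phi(x),x\rangle$; so $\mathcal H_\chi$ is a CQG if and only if $\langle \Phi(x),x\rangle>0$ for every nonzero $x\in H$, which -- the self-adjointness of $\Phi$ being already known -- is precisely the positivity of the operator $\Phi$.

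The one point deserving care is the identification of $[\,\, ,\,\, ]$ with the canonical form $\langle\,\, ,\,\, \rangle_\varphi$ of the $*$-Hopf algebra $\mathcal H_\chi$, which is what allows Theorem \ref{teo:andrus1} to be quoted verbatim; this rests only on the fact that the twist alters neither the involution nor the integral, together with $\mathcal H_\chi$ being a $*$-Hopf algebra, both of which were noted earlier. Beyond this bookkeeping there is no obstacle: all the genuine content -- the use of the cocycle identities \eqref{equation:1} and the recognition of $\Phi$ as the relevant convolution inverse -- has already been dealt with in the passage culminating in \eqref{equation:positive} and in Observation \ref{obse:Phiantipode}.
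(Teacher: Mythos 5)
Your proposal is correct and follows essentially the same route as the paper, which itself derives the theorem directly from equation \eqref{equation:positive} and Observation \ref{obse:Phiantipode}: self-adjointness is read off from $[x,y]=\langle \Phi(x),y\rangle=\langle x,\Phi(y)\rangle$, and compactness of $\mathcal H_\chi$ comes from applying Theorem \ref{teo:andrus1} to $\mathcal H_\chi$ with the unchanged integral $\varphi$ and involution, identifying $[\,\,,\,\,]$ with $\langle\,\,,\,\,\rangle_\varphi$ for the twisted product. Your explicit remark that this identification is the one point needing care is a useful clarification, but it introduces nothing beyond the paper's own argument.
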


\section{Matched pairs of CQG}\label{section:matchedpairs}
In this section, we consider the situation of a matched pair of Hopf algebras, as defined in \cite{kn:Majid}, and the
product  naturally obtained in this context --that is called the bicrossproduct--. We study the particular case where
the Hopf algebras in question are equipped with a compatible
$*$-involution and present a necessary and sufficient compatibility condition (linking the involution with the actions)
for the bicrossproduct to be a $*$-Hopf algebra. We prove that if the factors are CQG,
then so is the product.\\
\ \\
A \emph{matched pair} of bialgebras is a quadruple $(A,H,\tl,\tr)$ where $A$ and $H$ are
bialgebras and
\[
\begin{array}{c}
H \stackrel{\triangleleft}{\leftarrow} H\otimes A\stackrel{\triangleright}{\to}A,\\
x\triangleleft a \leftarrow x\otimes a \to x\triangleright a
\end{array}
\]
are linear maps subject to the following compatibility conditions:
\begin{enumerate}
\item
$(A,\tr)$ is a left $H$-module coalgebra, \textit{i.e.} $(A,\tr)$ is a left $H$-module and
\begin{equation}
\Delta_A(x\tr a) = \sum(x_1\tr a_1)\otimes(x_2\tr a_2),\quad \varepsilon_A(x\tr a)=\varepsilon_H(x)\varepsilon_A(a),\quad
\forall x\in H,\ a\in A.
\end{equation}
\item
$(H,\tl)$ is a right $A$-module coalgebra, \textit{i.e.} $(H,\tl)$ is a right $A$-module and
\begin{equation}
\Delta_H(x\tl a) = \sum(x_1\tl a_1)\otimes(x_2\tl a_2),\quad \varepsilon_H(x\tl a)=\varepsilon_H(x)\varepsilon_A(a),\quad
\forall x\in H,\ a\in A.
\end{equation}
\item
The actions $\tr$ and $\tl$ verify the compatibility relations:
\begin{align}
x\tr ab &= \sum(x_1\tr a_1)\big((x_2\tl a_2)\tr b\big),\quad x\tr 1=\varepsilon_H(x)1, 	\label{compat-bi-lr-1}\\
xy\tl a &= \sum\big(x\tl (y_1\tr a_1)\big)(y_2\tl a_2),\quad 1\tl a=\varepsilon_A(a)1, 	\label{compat-bi-lr-2}\\
\sum (x_1\tl a_1)\otimes(x_2\tr a_2) &= \sum (x_2\tl a_2)\otimes(x_1\tr a_1), \label{compat-bi-lr-3}
\end{align}
for all $x,y\in H$ and $a,b\in A$.
\end{enumerate}
\begin{obse}

\begin{enumerate}
\item 
It is well known --see \cite{kn:Majid}, Theorem 7.2.2-- that in the above situation the operations: 
$(a \otimes x)(b \otimes y)= \sum a(x_1 \tr b_1)\otimes (x_2 \tl b_2)y$, $\Delta(a \otimes x)= \sum a_1 \otimes x_1 \otimes a_2 \otimes x_2$ and 
the usual unit and counit, endow the vector space $A \otimes H$ with a bialgebra structure. The bialgebra thus obtained is called the \emph{bicrossproduct} of $A$ and $H$ and will be denoted by $A\bowtie H$.\\

Representing  the element $a \otimes x \in A\bowtie H$ as $ax$, we can abbreviate:

\begin{equation}\label{eq:negra1}
xa=\sum (x_1\tr a_1)(x_2\tl a_2),\quad \Delta(ax)= \sum a_1x_1 \otimes a_2x_2, 
\forall x\in H,\ a\in A.
\end{equation}
Moreover, if $A$ and $H$ are Hopf algebras, then $A\bowtie H$ is also a Hopf algebra with antipode
$$
\ant(ax)=\ant(x)\ant(a)=\sum \big(\ant(x_2)\tr \ant(a_2)\big) \big(\ant(x_1)\tl \ant(a_1)\big) ,\quad
\forall x\in H,\ a\in A.
$$
\end{enumerate}
\end{obse}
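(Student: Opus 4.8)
The plan is to regard the bialgebra structure on $A\otimes H$ as the content of the cited \cite{kn:Majid}, Theorem~7.2.2, and to verify only the two assertions that are genuinely at stake here: the abbreviated formulas \eqref{eq:negra1} and the existence and shape of the antipode when $A$ and $H$ are Hopf algebras.

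First I would fix the notation by identifying $A$ and $H$ with subspaces of $A\bowtie H$ through $a\mapsto a\otimes 1_H$ and $x\mapsto 1_A\otimes x$. Using the product rule $(a\otimes x)(b\otimes y)=\sum a(x_1\tr b_1)\otimes(x_2\tl b_2)y$ together with the unit normalizations $1_H\tr b=b$ and $x\tr 1_A=\varepsilon_H(x)1_A$ from \eqref{compat-bi-lr-1}, and $x\tl 1_A=x$ and $1_H\tl b=\varepsilon_A(b)1_H$ from \eqref{compat-bi-lr-2}, one checks at once that both maps are unital algebra homomorphisms and that $(a\otimes 1_H)(1_A\otimes x)=a\otimes x$; this justifies the abbreviation $ax:=a\otimes x$. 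Then $xa:=(1_A\otimes x)(a\otimes 1_H)=\sum(x_1\tr a_1)\otimes(x_2\tl a_2)$, which reads $\sum(x_1\tr a_1)(x_2\tl a_2)$ in the new notation, while the coproduct of the bicrossproduct, $\Delta(a\otimes x)=\sum(a_1\otimes x_1)\otimes(a_2\otimes x_2)$, becomes $\Delta(ax)=\sum a_1x_1\otimes a_2x_2$. This is \eqref{eq:negra1}.

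For the antipode, assuming $A$ and $H$ are Hopf algebras with antipodes $\ant_A$, $\ant_H$, I would define $\ant\colon A\bowtie H\to A\bowtie H$ by $\ant(ax)=\ant_H(x)\ant_A(a)$, the product being taken in $A\bowtie H$; this is well defined because $(a,x)\mapsto \ant_H(x)\ant_A(a)$ is bilinear. To see it is the antipode, I would use $\Delta(ax)=\sum a_1x_1\otimes a_2x_2$ together with the fact that, $A$ and $H$ being subalgebras, the identities $\sum\ant_A(a_1)a_2=\varepsilon_A(a)1$ and $\sum x_1\ant_H(x_2)=\varepsilon_H(x)1$ already hold in $A\bowtie H$: indeed $\sum\ant(a_1x_1)(a_2x_2)=\sum\ant_H(x_1)\big(\ant_A(a_1)a_2\big)x_2=\varepsilon_A(a)\sum\ant_H(x_1)x_2=\varepsilon(ax)1$, and symmetrically $\sum(a_1x_1)\ant(a_2x_2)=\sum a_1\big(x_1\ant_H(x_2)\big)\ant_A(a_2)=\varepsilon_H(x)\sum a_1\ant_A(a_2)=\varepsilon(ax)1$. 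The explicit formula then follows by expanding $\ant_H(x)\ant_A(a)=(1_A\otimes\ant_H(x))(\ant_A(a)\otimes 1_H)=\sum\big((\ant_H x)_1\tr(\ant_A a)_1\big)\big((\ant_H x)_2\tl(\ant_A a)_2\big)$ and substituting the anti-coalgebra identities $\Delta_H\ant_H(x)=\sum\ant_H(x_2)\otimes\ant_H(x_1)$ and $\Delta_A\ant_A(a)=\sum\ant_A(a_2)\otimes\ant_A(a_1)$.

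The laborious step — associativity of the bicrossproduct multiplication and multiplicativity of $\Delta$, which need careful manipulation of \eqref{compat-bi-lr-1}--\eqref{compat-bi-lr-3} — is exactly what the cited theorem supplies, so I would not reprove it. With that in hand, the only point requiring (minor) care is keeping track of which tensor leg each of $\ant_A$, $\ant_H$, $\ant$ acts on; and one notes that once both antipode axioms are checked, $\ant$ is automatically anti-multiplicative, so no separate verification of that property is needed.
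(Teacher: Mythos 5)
Your proposal is correct and takes essentially the same route as the paper, which likewise defers the bialgebra structure of $A\bowtie H$ to Majid's Theorem 7.2.2 and treats the abbreviated formulas and the antipode as routine consequences; your verification that $\ant(ax)=\ant_H(x)\ant_A(a)$ satisfies both antipode axioms, and the derivation of the explicit formula via the anti-coalgebra property of $\ant_A$ and $\ant_H$, are sound. (Only a cosmetic remark: the identities $1_H\tr b=b$ and $x\tl 1_A=x$ come from the unitality of the module structures in the definition of a matched pair, whereas \eqref{compat-bi-lr-1} and \eqref{compat-bi-lr-2} only supply $x\tr 1=\varepsilon_H(x)1$ and $1\tl a=\varepsilon_A(a)1$.)
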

\begin{lema} \label{lema:integral}
Let $(A,H,\tl,\tr)$ be a matched pair of cosemisimple Hopf algebras and call $\varphi_A\in A^\vee$ and $\varphi_H\in H^\vee$ its corresponding normal integrals.
Then, $A\bowtie H$ is a cosemisimple Hopf algebra and the linear map $\varphi_{A\bowtie H} : A\bowtie H \rightarrow \C$ defined by
$\varphi_{A\bowtie H}(ax)=\varphi_A(a)\varphi_H(x)$ is a normal integral.  Moreover, the following equations hold:
\begin{equation} \label{eq:pi-AH}
\varphi_A(a)\,\varphi_H(xy)=\sum \varphi_A(x_1\tr a_1)\, \varphi_H\big((x_2\tl a_2)y\big),
\quad
\forall x,y\in H,\ a\in A.
\end{equation}
\begin{equation} \label{eq:2pi-AH}
\varphi_A(ba)\,\varphi_H(x)=\sum \varphi_A(b(x_1\tr a_1))\, \varphi_H\big((x_2\tl a_2)\big),
\quad
\forall x\in H,\ a,b\in A.
\end{equation}
\end{lema}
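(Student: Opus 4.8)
The plan is to deduce everything from the normal integral of $A\bowtie H$ and its Nakayama automorphism. By the preceding Observation, $A\bowtie H$ is a Hopf algebra whose comultiplication, on the underlying space $A\otimes H$, is the tensor-product coalgebra structure $\sum(a_1\otimes x_1)\otimes(a_2\otimes x_2)$. Since $\varphi_A$ and $\varphi_H$ are two-sided integrals, $(\id\otimes\varphi_{A\bowtie H})\Delta(a\otimes x)=\big(\sum a_1\varphi_A(a_2)\big)\otimes\big(\sum x_1\varphi_H(x_2)\big)=\varphi_A(a)\varphi_H(x)\,1_{A\bowtie H}$, and symmetrically $(\varphi_{A\bowtie H}\otimes\id)\Delta(a\otimes x)=\varphi_A(a)\varphi_H(x)\,1_{A\bowtie H}$; as $\varphi_{A\bowtie H}(1)=1$, this shows $\varphi_{A\bowtie H}$ is a normal integral, whence by Theorem~\ref{teo:semisimple} $A\bowtie H$ is cosemisimple and $\varphi_{A\bowtie H}$ is its unique normal integral.

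For the two displayed identities I first record the relevant structure. Using the explicit product and antipode of $A\bowtie H$ one checks that $A\otimes 1$ and $1\otimes H$ are sub-Hopf algebras isomorphic to $A$ and $H$, that $(a\otimes 1)(1\otimes x)=a\otimes x$, $(b\otimes 1)(a\otimes 1)=ba\otimes 1$, $(c\otimes h)(1\otimes k)=c\otimes hk$, and that relation \eqref{eq:negra1} reads $(1\otimes x)(a\otimes 1)=\sum(x_1\tr a_1)\otimes(x_2\tl a_2)$. Since $\varphi_{A\bowtie H}$ restricts to $\varphi_A$ on $A\otimes 1$ and to $\varphi_H$ on $1\otimes H$, uniqueness of the normal integral and of the Nakayama automorphism forces the Nakayama automorphism $\mathcal N$ of $A\bowtie H$ to satisfy $\mathcal N(a\otimes 1)=\mathcal N_A(a)\otimes 1$ and $\mathcal N(1\otimes x)=1\otimes\mathcal N_H(x)$, where $\mathcal N_A,\mathcal N_H$ are the Nakayama automorphisms of $A,H$; and putting one argument equal to $1$ in $\varphi(uv)=\varphi(v\mathcal N(u))$ gives $\varphi\circ\mathcal N=\varphi$ in each of $A$, $H$, $A\bowtie H$. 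In particular $\varphi_{A\bowtie H}\big((1\otimes x)(a\otimes 1)\big)=\varphi_{A\bowtie H}\big((a\otimes 1)\mathcal N(1\otimes x)\big)=\varphi_{A\bowtie H}\big(a\otimes\mathcal N_H(x)\big)=\varphi_A(a)\varphi_H(x)$, which is the case $y=1$ of \eqref{eq:pi-AH}.

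The general case is obtained by writing each side as $\varphi_{A\bowtie H}$ of an explicit word in $A\otimes 1$ and $1\otimes H$ and moving the $A$-letter past the $H$-letter with $\mathcal N$. For \eqref{eq:pi-AH} the left side is $\varphi_{A\bowtie H}\big((a\otimes 1)(1\otimes x)(1\otimes y)\big)=\varphi_A(a)\varphi_H(xy)$, while by \eqref{eq:negra1} the right side is $\varphi_{A\bowtie H}\big((1\otimes x)(a\otimes 1)(1\otimes y)\big)$; applying $\varphi(uv)=\varphi(v\mathcal N(u))$ with $u=(1\otimes x)(a\otimes 1)$, using that $\mathcal N$ is an algebra map together with the facts above, the right side becomes $\varphi_{A\bowtie H}\big((1\otimes y\mathcal N_H(x))(\mathcal N_A(a)\otimes 1)\big)=\varphi_A(\mathcal N_A(a))\,\varphi_H(y\mathcal N_H(x))=\varphi_A(a)\varphi_H(xy)$, the last equality because $\varphi_A\mathcal N_A=\varphi_A$ and $\varphi_H(y\mathcal N_H(x))=\varphi_H(xy)$. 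Identity \eqref{eq:2pi-AH} is entirely analogous: its two sides are $\varphi_{A\bowtie H}$ of $(b\otimes 1)(a\otimes 1)(1\otimes x)$ and of $(b\otimes 1)(1\otimes x)(a\otimes 1)$, and pushing $b\otimes 1$ to the far right by $\mathcal N$, combined with $\varphi_A(a\mathcal N_A(b))=\varphi_A(ba)$, yields the equality.

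The one genuinely delicate point is the claim that $A\otimes 1$ and $1\otimes H$ are sub-Hopf algebras: a sub-bialgebra of a Hopf algebra need not be stable under the antipode, so one must unwind the antipode formula of the Observation and check $\mathcal S_{A\bowtie H}(a\otimes 1)=\mathcal S_A(a)\otimes 1$ and $\mathcal S_{A\bowtie H}(1\otimes x)=1\otimes\mathcal S_H(x)$ (which then also gives $\mathcal N(A\otimes 1)\subseteq A\otimes 1$ and $\mathcal N(1\otimes H)\subseteq 1\otimes H$, needed for the restriction statements). After that the argument is formal. Alternatively one could prove \eqref{eq:pi-AH} and \eqref{eq:2pi-AH} by a direct Sweedler computation from the matched-pair axioms \eqref{compat-bi-lr-1}--\eqref{compat-bi-lr-3} and the integral properties of $\varphi_A,\varphi_H$, but then the index bookkeeping becomes the main difficulty.
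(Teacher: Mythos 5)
Your proposal is correct and follows essentially the same route as the paper: verify directly that $\varphi_{A\bowtie H}$ is a normal integral (the coalgebra structure being the tensor one), note that the Nakayama automorphism of $A\bowtie H$ restricts to those of $A$ and $H$ (which, as you observe, rests on $A\otimes 1$ and $1\otimes H$ being antipode-stable together with formula \eqref{eq:nakayama}), and then obtain \eqref{eq:pi-AH} and \eqref{eq:2pi-AH} by moving factors across $\varphi_{A\bowtie H}$ via $\mathcal N$. Your handling of the general $y$ through the $y=1$ case, and of \eqref{eq:2pi-AH} via $\mathcal N$ on $b$ rather than $\mathcal N^{-1}$ on $a$, are only cosmetic variants of the paper's computation.
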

\dem
The proof that $\varphi_{A\bowtie H}\in (A\bowtie H)^\vee$ is a normal integral on $A\bowtie H$ follows directly from the definitions; hence $A\bowtie H$ is cosemisimple.

Let us call $\mathcal N: A\bowtie H \rightarrow A\bowtie H$ the Nakayama automorphism of $A\bowtie H$.
Observe that from equation \eqref{eq:nakayama} we deduce that $A$ and $H$ are $\mathcal N$-invariant, and therefore
it follows by the definition of $\mathcal N$ --recall \eqref{eq:naka}-- that the restrictions $\mathcal N|_A$ and $\mathcal N|_H$ are the Nakayama automorphisms of $A$ and $H$, respectively.

To prove \eqref{eq:pi-AH} we take $a\in A$ and $x,y\in H$ and compute:
\begin{align*}
\varphi_A(a)\,\varphi_H(xy)
=
\varphi_A(a)\,\varphi_H(y\mathcal N(x))
&=
\varphi_{A\bowtie H}(ay\mathcal N(x))
=
\varphi_{A\bowtie H}(xay)\\
=
\varphi_{A\bowtie H}\left(\sum(x_1\tr a_1)(x_2\tl a_2)y\right)
&=
\sum \varphi_A(x_1\tr a_1)\,\varphi_{H}\big((x_2\tl a_2)y\big).
\end{align*}

Equation \eqref{eq:2pi-AH} is proved similarly using $\mathcal N^{-1}$ instead of $\mathcal N$ and formula \eqref{eq:-nakayama}.
\fin

\begin{teo} \label{teo:bicrossproduct-CQG}
Let $(A,H,\tl,\tr)$ be a matched pair of Hopf algebras where $A$ and $H$ are $*$-Hopf algebras.
Then there exists a structure of $*$-Hopf algebra in $A\bowtie H$ such that $A$ and $H$ are sub-$*$-Hopf algebras
of $A\bowtie H$ if and only if
\begin{equation} \label{eq:compat-cerito-acciones}
a^*\varepsilon_H(x^*)=\sum (x_2\tl a_2)^*\tr (x_1\tr a_1)^*
\textrm{ and }
\varepsilon_A(a^*)x^*=\sum (x_2\tl a_2)^*\tl (x_1\tr a_1)^*
\quad
\forall x\in H,\ a\in A.
\end{equation}

Moreover, in this situation $A$ and $H$ are CQG if and only if  $A\bowtie H$ is a CQG.
\end{teo}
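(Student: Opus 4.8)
The plan is to prove the theorem in two parts, corresponding to its two assertions. The first part characterizes when $A \bowtie H$ admits a $*$-structure making $A$ and $H$ sub-$*$-Hopf algebras; the second shows that, given such a structure, compactness of the factors is equivalent to compactness of the product. I will treat the second part, assuming the first (the equivalence conditions \eqref{eq:compat-cerito-acciones}) has been established, since that is the ``compact quantum group'' content and is where I expect the real work to lie.

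For the first part, the natural candidate for the involution on $A \bowtie H$ is forced by the requirement that $A$ and $H$ be sub-$*$-Hopf algebras: writing a general element as $ax$ with $a\in A$, $x\in H$, antimultiplicativity forces $(ax)^* = x^* a^*$, and one must re-express $x^* a^* \in H \cdot A$ back in the form $A \cdot H$ using the straightening relation \eqref{eq:negra1}, namely $xa = \sum (x_1\tr a_1)(x_2\tl a_2)$. So one \emph{defines} $(ax)^* := \sum \big((x^*)_1 \tr (a^*)_1\big)\big((x^*)_2 \tl (a^*)_2\big)$ — equivalently $(ax)^* = x^* a^*$ read in $A\bowtie H$ — and checks that this is a conjugate-linear involution that is antimultiplicative and comultiplicative. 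The verification of antimultiplicativity on mixed products is exactly where conditions \eqref{eq:compat-cerito-acciones} enter: they are precisely the identities needed to show $(xa)^* = a^* x^*$ in a way consistent with the straightening, and conversely, applying the candidate $*$ to the defining relations and comparing forces \eqref{eq:compat-cerito-acciones}. Comultiplicativity follows from the product comultiplication $\Delta(ax) = \sum a_1 x_1 \otimes a_2 x_2$ together with comultiplicativity of $*$ on each factor. The identity $\varepsilon * = \bar\varepsilon$ and $1^* = 1$ are immediate.

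For the second part, I would argue via Lemma \ref{lema:equivalencias} and the positivity criterion of Theorem \ref{teo:andrus1}. By Lemma \ref{lema:integral}, $A\bowtie H$ is cosemisimple with normal integral $\varphi_{A\bowtie H}(ax) = \varphi_A(a)\varphi_H(x)$, and by Theorem \ref{teo:andrus1} it is a CQG iff the Hermitian form $\langle u,v\rangle_{\varphi_{A\bowtie H}} = \varphi_{A\bowtie H}(v^* u)$ is positive definite. Since $A$ and $H$ are sub-$*$-Hopf algebras of $A\bowtie H$, Corollary \ref{coro:cqg-cerito} gives the forward implication immediately: if $A\bowtie H$ is a CQG so are all its sub-Hopf algebras, in particular $A$ and $H$. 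For the converse, assume $A$ and $H$ are CQG, so $\varphi_A(a^*a) > 0$ and $\varphi_H(x^*x) > 0$ for nonzero $a, x$. The strategy is to exhibit an orthonormal-type basis of $A\bowtie H$ adapted to the tensor decomposition $A \otimes H$ and compute $\langle -,-\rangle_{\varphi_{A\bowtie H}}$ against it. Choose an orthonormal basis $\{a_i\}$ of $A$ for $\langle -,-\rangle_{\varphi_A}$ and $\{x_j\}$ of $H$ for $\langle -,-\rangle_{\varphi_H}$; the candidate claim is that $\{a_i x_j\}$ is orthonormal in $A\bowtie H$, which amounts to showing $\varphi_{A\bowtie H}\big((a_i x_j)^*(a_k x_\ell)\big) = \delta_{ik}\delta_{j\ell}$. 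Expanding $(a_i x_j)^* = x_j^* a_i^*$ and using the straightening relation, this reduces — after applying $\varphi_{A\bowtie H} = \varphi_A \otimes \varphi_H$ — to the twisted invariance identities \eqref{eq:pi-AH} and \eqref{eq:2pi-AH} of Lemma \ref{lema:integral}, which are exactly designed to move the actions $\tr, \tl$ past the integrals. Once orthonormality is verified, positive definiteness of $\langle -,-\rangle_{\varphi_{A\bowtie H}}$ is immediate, and Theorem \ref{teo:andrus1} concludes.

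\textbf{The main obstacle} I anticipate is the bookkeeping in the converse of the second part: the product $(a_i x_j)^*(a_k x_\ell) = x_j^* a_i^* a_k x_\ell$ must be straightened into $A\cdot H$ form, which introduces several applications of $\tr$ and $\tl$ intertwined with Sweedler sums, and one must see cleanly that applying $\varphi_A \otimes \varphi_H$ collapses everything using \eqref{eq:pi-AH}--\eqref{eq:2pi-AH} back to $\varphi_A(a_i^* a_k)\,\varphi_H(x_j^* x_\ell)$. An alternative, perhaps cleaner route avoiding explicit bases: show directly that the map $A\otimes H \to A\bowtie H$, $a\otimes x \mapsto ax$, is an isometry when $A\otimes H$ carries the tensor-product inner product $\langle a\otimes x, b\otimes y\rangle = \langle a,b\rangle_{\varphi_A}\langle x,y\rangle_{\varphi_H}$ and $A\bowtie H$ carries $\langle -,-\rangle_{\varphi_{A\bowtie H}}$; positivity of the former (a tensor product of inner products, cf. Remark \ref{rem:inv-prod-tens}) then transfers to the latter. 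Either way, the crux is the same identity, and it is precisely Lemma \ref{lema:integral} that makes it go through; I would isolate that computation as the heart of the argument.
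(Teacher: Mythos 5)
Your proposal is correct and follows essentially the paper's own route: the involution is forced as $(ax)^*=x^*a^*$ read through the straightening relation \eqref{eq:negra1}, the conditions \eqref{eq:compat-cerito-acciones} are extracted by applying $\id_A\otimes\varepsilon_H$ and $\varepsilon_A\otimes\id_H$, and the compactness claim reduces via Lemma \ref{lema:integral} and Theorem \ref{teo:andrus1} to the identity $\langle ax,by\rangle_{\varphi_{A\bowtie H}}=\langle a,b\rangle_A\langle x,y\rangle_H$ proved with \eqref{eq:pi-AH} --- your ``alternative cleaner route'' is literally the paper's computation, the orthonormal-basis version being an equivalent repackaging. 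The forward implication via Corollary \ref{coro:cqg-cerito} is likewise how the paper (implicitly) disposes of that direction.
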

\dem
As any  $*$-structure in $A\bowtie H$ as above will satisfy:
\begin{equation} \label{eq:cero-bismash}
(ax)^*=x^*a^*=\sum (x_1^*\tr a_1^*)(x_2^*\tl a_2^*),\quad
\forall x\in H,\ a\in A.
\end{equation}
Hence it is natural to take the above equation \eqref{eq:cero-bismash} as the definition of a $*$-structure on $A\bowtie H$. Clearly this $*$--structure  is comultiplicative on $A\bowtie H$ and restricts to $A$ and $H$ as stated.
To prove the antimultiplicativity we observe that:$\big((ax)(by)\big)^*=
\left(\sum a (x_1\tr b_1)(x_2\tl b_2) y \right)^*
=\sum y^* (x_2\tl b_2)^* (x_1\tr b_1)^* a^* ,
(by)^*(ax)^*=y^*b^*x^*a^*.$
Then $*$ is antimultiplicative if and only if
\begin{align*}
a^*x^*=
\sum (x_2\tl a_2)^* (x_1\tr a_1)^*=
\sum \big((x_2\tl a_2)^*_1\tr (x_1\tr a_1)^*_1\big) \big((x_2\tl a_2)^*_2\tl (x_1\tr a_1)^*_2\big),[\textrm{using \eqref{eq:negra1}}].
\end{align*}
Or, in the tensor product notation,  if and only if
\begin{align*}
a^*\otimes x^* &=
\sum \big((x_3\tl a_3)^*\tr (x_1\tr a_1)^*\big) \otimes \big((x_4\tl a_4)^*\tl (x_2\tr a_2)^*\big)\in A\otimes H
, \quad \forall a\in A,\ x\in H.
\end{align*}
Then, the relations \eqref{eq:compat-cerito-acciones} follows by applying $\operatorname{id}_A \otimes \varepsilon_H$ and $\varepsilon_A \otimes \operatorname{id}_H$ to the equation above. 

Conversely, if we assume equations \eqref{eq:compat-cerito-acciones}, then:
\begin{align*}
\sum(x_2\tl a_2)^* (x_1\tr a_1)^*
&=
\sum \big((x_2\tl a_2)^*_1\tr (x_1\tr a_1)^*_1\big) \big((x_2\tl a_2)^*_2\tl (x_1\tr a_1)^*_2\big)
&=
\sum \big((x_3\tl a_3)^*\tr (x_1\tr a_1)^*\big) \big((x_4\tl a_4)^*\tl (x_2\tr a_2)^*\big)
\\
&=
\sum \big((x_2\tl a_2)^*\tr (x_1\tr a_1)^*\big) \big((x_4\tl a_4)^*\tl (x_3\tr a_3)^*\big)
&& [\textrm{using \eqref{compat-bi-lr-3}}]
\\
&=
\sum \big (a^*_1\varepsilon_H(x^*_1))( x^*_2\varepsilon_A(a^*_2)) \ = \ a^*x^*.
\end{align*}
We deduce that in this situation  $*$ is antimultiplicative, and then, the fact that it is an involution follows immediately.

\medskip
We assume now that $A$ and $H$ are CQG and verify conditions \eqref{eq:compat-cerito-acciones}.
Let $\varphi_A\in A^\vee$ and $\varphi_H\in H^\vee$ be the normal integrals.
As $A$ and $H$ are CQG, the hermitian forms
$\langle\ ,\ \rangle_A$ and $\langle\ ,\ \rangle_H$ defined by the
formul\ae:
$\langle a,b\rangle_A=\varphi_A(b^*a), 
\langle x,y\rangle_H=\varphi_H(y^*x),
\forall a,b\in H,\ x,y\in H$,
are inner products.

\medskip
Considering the previously defined normal integral $\varphi_{A\bowtie H}: A\bowtie H \rightarrow \C$ and computing explicitly its associated hermitian form one obtains that:
\begin{equation}\label{eq:rels-pi}
\begin{split}
\langle ax,by \rangle_{A\bowtie H}&=
\varphi_{A\bowtie H}\big((by)^*ax\big)
=\varphi_{A\bowtie H}\big(y^*b^*ax\big)
=\sum \varphi_A \big(y^*_1\tr (b^*a)_1\big) \varphi_H \Big(\left (y^*_2\tl (b^*a)_2 \right )x\Big)\\
&= \varphi_A(b^*a)\varphi_H(y^*x)= \langle a,b\rangle_A \langle x,y\rangle_H.
\end{split}
\end{equation}

The penultimate equation above, follows directly from condition \eqref{eq:pi-AH}.
It is then clear that when $A$ and $H$ are CQGs, the hermitian form:
$\langle ax,by \rangle_{A\bowtie H}=\langle a,b\rangle_A \langle x,y\rangle_H,\quad
\forall a,b\in H,\ x,y\in H$, is indeed an inner product.
\qed

\subsection{The quantum double}
The Drinfel'd double of a finite Hopf algebra is a particular case of the above construction, and thus we can apply to this situation the results just proved. In that sense, using condition (\ref{eq:compat-cerito-acciones}) we deduce that $H$ is a CQG if and only if, its double is a CQG.\\
\ \\
Let $H$ be a finite Hopf algebra. We consider the actions
$H \stackrel{\triangleleft}{\leftarrow} H\otimes H^\vee\stackrel{\triangleright}{\to}H^\vee$ defined by
\begin{equation*} 
(x\tr\alpha)(y)=\sum \alpha\left(\ant^{-1}(x_2) y x_1\right),\quad
x\tl \alpha = \sum \alpha\left(\ant^{-1}(x_3)x_1 \right) x_2
,\quad \forall \alpha\in H^\vee,\ x,y\in H.
\end{equation*}
In this situation it is well known that $\left(H^{\vee \cop},H,\tl,\tr\right)$ is a matched pair of Hopf algebras and
the bicrossproduct $D(H)=H^{\vee \cop}\bowtie H$ is called the {\em quantum double} --or the {\em Drinfel'd double}-- of $H$.
Hence $D(H)$ is a Hopf algebra with the property that $H^{\vee \cop}$ and $H$ are Hopf subalgebras and $D(H)=H^{\vee \cop}H$.
In explicit terms the Hopf structure maps of $D(H)$ are:
\begin{align*}
(\alpha x)(\beta  y) &=
\sum \alpha (x_1\tr \beta_2)(x_2\tl \beta_1) y=
\sum \alpha \left(x_1\rightharpoonup \beta \leftharpoonup \ant^{-1}(x_3)\right) x_2 y,\\
\Delta(\alpha x) &= \sum \alpha_2 x_1 \otimes \alpha_1 x_2,\\
\ant(\alpha x) &=
\sum \left(\ant(x_2)\tr \ant(\alpha_1)\right) \left(\ant(x_1)\tl \ant(\alpha_2)\right)
=\sum \big(\ant(x_3)\rightharpoonup \ant^{-1}(\alpha) \leftharpoonup x_1 \big) \ant(x_2),
\end{align*}
for all $\alpha,\beta\in H^\vee$ and $x,y\in H$.

\begin{coro} \label{coro:double-cqg}
A finite Hopf algebra $H$ is a CQG if and only if its quantum double
$D(H)$ is a CQG.
\end{coro}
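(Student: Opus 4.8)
The plan is to deduce the statement from Theorem~\ref{teo:bicrossproduct-CQG} applied to the matched pair $\left(H^{\vee\cop},H,\tl,\tr\right)$ whose bicrossproduct is $D(H)$, reducing everything to one compatibility check.

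First I would dispose of the easy implication. If $D(H)$ is a CQG, then it is in particular a cosemisimple $*$-Hopf algebra, and $H$ is a sub-Hopf algebra of $D(H)$; so by Remark~\ref{rk:nos} the restricted involution makes $H$ a $*$-Hopf algebra, and Corollary~\ref{coro:cqg-cerito} gives that $H$ is a CQG. (As usual, this determines a CQG structure on $H$ only up to the automorphism of Theorem~\ref{teo:andrus2}, which is all that is needed.) For the converse, suppose $(H,*)$ is a finite CQG. By Lemma~\ref{lema:dual-finite-cqg}, the dual $H^\vee$ equipped with the involution $\alpha^*(x)=\overline{\alpha(\ant(x^*))}$ of~\eqref{eq:*-dual} is again a finite CQG, and hence so is $H^{\vee\cop}$ (the $\cop$ of a CQG is a CQG). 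Thus both factors $A=H^{\vee\cop}$ and $H$ of the matched pair defining $D(H)$ are CQG, and by Theorem~\ref{teo:bicrossproduct-CQG} it suffices to verify that the two compatibility conditions~\eqref{eq:compat-cerito-acciones} hold for the actions $\tr$ and $\tl$ of the double.

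The core of the argument is therefore this verification, and it is a direct computation. One writes out both sides of each equation in~\eqref{eq:compat-cerito-acciones} using the explicit formulas $(x\tr\alpha)(y)=\sum\alpha(\ant^{-1}(x_2)yx_1)$ and $x\tl\alpha=\sum\alpha(\ant^{-1}(x_3)x_1)x_2$ that define the double, the formula $\alpha^*(x)=\overline{\alpha(\ant(x^*))}$ for the $*$-structure on $H^\vee$, the fact (noted before Lemma~\ref{lema:dual-finite-cqg}) that on a finite CQG one has $\ant^2=\id$ and $\ant\,*=*\,\ant$ — so that $\ant^{-1}=\ant$ and $\ant$ is a $*$-map — and the matched-pair axiom~\eqref{compat-bi-lr-3}; the scalar factors $\varepsilon_H(x^*)=\overline{\varepsilon_H(x)}$ and $\varepsilon_A(a^*)=\overline{\varepsilon_A(a)}$ that appear come from $\varepsilon\,*=\overline{\varepsilon}$ and are recovered by evaluating the dual-side identity on $1_H$. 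I expect the only real friction to be the bookkeeping of the $\cop$-twisted comultiplication of $A=H^{\vee\cop}$, i.e.\ keeping the order of the tensor legs $\alpha_1,\alpha_2$ straight while pairing against $H$. Once~\eqref{eq:compat-cerito-acciones} is established, Theorem~\ref{teo:bicrossproduct-CQG} yields simultaneously that $D(H)$ carries a $*$-Hopf algebra structure in which $H^{\vee\cop}$ and $H$ are sub-$*$-Hopf algebras, and that $D(H)$ is a CQG since $H^{\vee\cop}$ and $H$ are; this completes the proof.
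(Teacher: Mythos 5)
Your proposal is correct and follows essentially the same route as the paper: the paper's proof consists precisely of endowing $H^{\vee}$ with the involution \eqref{eq:*-dual}, verifying one of the compatibility conditions \eqref{eq:compat-cerito-acciones} by the direct computation you describe (using the explicit formulas for $\tr$, $\tl$ and the antipode identities, the other condition being ``similar''), and then invoking Theorem \ref{teo:bicrossproduct-CQG}. Your separate treatment of the easy direction via Remark \ref{rk:nos} and Corollary \ref{coro:cqg-cerito} is a harmless (and slightly more explicit) variant of what the paper leaves implicit in the ``moreover'' clause of that theorem.
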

\dem
We assume first that $H$ is a $*$-Hopf algebra and we endow $H^\vee$ with the $*$-structure given in \eqref{eq:*-dual}.
We show that the first of the equations \eqref{eq:compat-cerito-acciones} is verified --the other is similar--:

\begin{align*}
\sum(x_2\tl \alpha_1)^*\tl (x_1\tr \alpha_2)^*
&=\sum(x_1\tr \alpha_2)^* \Big(\ant^{-1}\big((x_2\tl \alpha_1)^*_3\big)\,(x_2\tl \alpha_1)^*_1 \Big)(x_2\tl \alpha_1)^*_2 \\
=
\sum \overline{(x_1\tr \alpha_2)\Big((x_2\tl \alpha_1)_3\,\ant^{-1} \big((x_2\tl \alpha_1)_1\big)\Big)}
(x_2\tl &\alpha_1)^*_2 = \Big(\sum (x_1\tr \alpha_2) \Big((x_2\tl \alpha_1)_3\,\ant^{-1} \big((x_2\tl \alpha_1)_1\big)\Big)
(x_2\tl \alpha_1)_2\Big)^*
\\
=
\Big(\sum \alpha_2 \Big(\ant^{-1}(x_2)\, (x_3\tl \alpha_1)_3\,\ant^{-1}\big((x_3\tl \alpha_1)_1\big)\,x_1 \Big)&
(x_3\tl \alpha_1)_2\Big)^*
=
\Big(\sum \alpha_1\big(\ant^{-1}(x_7)x_3\big) \alpha_2 \big(\ant^{-1}(x_2)\, x_6\ant^{-1}(x_4)x_1\big) x_5\Big)^*
\\
=\left(\sum \alpha\big(\ant^{-1}(x_7)x_3 \ant^{-1}(x_2)\, x_6\ant^{-1}(x_4)x_1\big) x_5\right)^*
&=
\overline{\alpha(1)} x^*
,\quad  \forall \alpha\in H^\vee, x\in H.
\end{align*}
Then the result follows directly from Theorem \ref{teo:bicrossproduct-CQG}.
\qed
\begin{obse}
If $H$ is a finite CQG, then the corresponding $*$-structure in $D(H)$ is defined by
\begin{equation*} 
(\alpha x)^*=
\sum \left( x_1^*\rightharpoonup \alpha^*\leftharpoonup \ant^{-1}(x_3)^*\right) x_2^* =
\sum \left( x_3 \rightharpoonup \alpha \leftharpoonup \ant^{-1}(x_1)\right)^* x_2^*,
\end{equation*}
for all $\alpha\in H^{\vee},\ x\in H$.
\end{obse}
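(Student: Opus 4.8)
The plan is to reduce the computation of $(\alpha x)^*$ to the antimultiplicativity of $*$ on the bicrossproduct, already available from the construction, and then to push the result into the normal form $H^\vee\cdot H$ of $D(H)$. Concretely, applying \eqref{eq:cero-bismash} with $A=H^{\vee\cop}$ gives $(\alpha x)^*=x^*\alpha^*$, where $x^*=\varepsilon x^*$ and $\alpha^*=\alpha^*1$ as elements of $D(H)$. Rewriting $x^*\alpha^*$ in normal form is exactly an instance of the cross relation of the double: in the product formula $(\alpha x)(\beta y)=\sum\alpha(x_1\rightharpoonup\beta\leftharpoonup\ant^{-1}(x_3))x_2y$ I would take the $H^\vee$-slot equal to the counit $\varepsilon$ (the unit of $H^\vee$), replace $x$ by $x^*$, $\beta$ by $\alpha^*$, set $y=1$, and use that $*$ is comultiplicative so that $(x^*)_i=(x_i)^*$. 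This yields directly $(\alpha x)^*=\sum\big(x_1^*\rightharpoonup\alpha^*\leftharpoonup\ant^{-1}(x_3^*)\big)x_2^*$. Since in a finite CQG the antipode is a $*$-map (it follows from $\ant^2=\id$ together with $(\ant*)^2=\id$), it commutes with $*$, whence $\ant^{-1}(x_3^*)=\ant^{-1}(x_3)^*$; this is the first displayed equality.

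For the second equality I would isolate how $*$ interacts with the two hit-actions. Using only the definition \eqref{eq:*-dual} of $*$ on $H^\vee$, the facts that $\ant^{-1}$ is an anti-homomorphism commuting with $*$, and the defining relations $(u\rightharpoonup\gamma)(y)=\gamma(yu)$ and $(\gamma\leftharpoonup v)(y)=\gamma(vy)$, a short evaluation against an arbitrary $y\in H$ establishes
\[
(u\rightharpoonup\gamma)^*=\ant^{-1}(u^*)\rightharpoonup\gamma^*,\qquad (\gamma\leftharpoonup v)^*=\gamma^*\leftharpoonup\ant^{-1}(v^*),
\]
and hence, since the two actions commute,
\[
(u\rightharpoonup\gamma\leftharpoonup v)^*=\ant^{-1}(u^*)\rightharpoonup\gamma^*\leftharpoonup\ant^{-1}(v^*).
\]
Applying this identity to recognize the $H^\vee$-factor $x_1^*\rightharpoonup\alpha^*\leftharpoonup\ant^{-1}(x_3)^*$ of the first closed form as the image under $*$ of a hit-action expression in the unstarred generators $\alpha,x$, and then collapsing the resulting double antipodes by $\ant^2=\id$, produces the second closed form.

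The routine parts are the evaluations defining the two transformation rules. The delicate part, which I expect to be the main obstacle, is the bookkeeping in this last rewriting: one must track the three Sweedler components $x_1,x_2,x_3$ simultaneously — the middle one being multiplied on the right while the outer two feed $\rightharpoonup$ and $\leftharpoonup$ — and decide precisely which component enters $\rightharpoonup$ and which enters $\leftharpoonup$ once everything is conjugated by $*$. It is here that the involutivity $\ant^2=\id$ of a finite CQG is indispensable, since it is what cancels the spurious $\ant^{\pm2}$ factors created by composing the two transformation rules, leaving no leftover antipode on one of the arguments inside the final $(\,\cdot\,)^*$. As a sanity check on the placement of $\ant^{-1}$ and of $x_1$ versus $x_3$ in the final formula, I would test it on $H=\C G$, where $H^\vee=\C^G$ carries the $*$ given by complex conjugation and $\ant=\ant^{-1}$ acts by inversion, since there the normal form of $x^*\alpha^*$ can be written out explicitly and compared term by term.
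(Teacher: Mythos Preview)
Your derivation of the first equality is correct and is exactly what is intended: antimultiplicativity of $*$ on the bicrossproduct gives $(\alpha x)^*=x^*\alpha^*$, and rewriting $x^*\alpha^*$ in the normal form of $D(H)$ via the product rule, together with comultiplicativity of $*$ and $\ant*=*\ant$, yields the first displayed expression. The paper states this observation without proof, so there is no argument to compare against.

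Your transformation rules $(u\rightharpoonup\gamma)^*=\ant^{-1}(u^*)\rightharpoonup\gamma^*$ and $(\gamma\leftharpoonup v)^*=\gamma^*\leftharpoonup\ant^{-1}(v^*)$ are also correct. The gap is that you stop short of actually carrying out the final rewriting and simply assert that it ``produces the second closed form''. It does not. Solving $\ant^{-1}(u^*)=x_1^*$ and $\ant^{-1}(v^*)=\ant^{-1}(x_3)^*$ under $\ant^2=\id$ gives $u=\ant^{-1}(x_1)$ and $v=x_3$, so the $H^\vee$-factor of the first form equals $\big(\ant^{-1}(x_1)\rightharpoonup\alpha\leftharpoonup x_3\big)^*$, with the roles of $x_1$ and $x_3$ interchanged relative to the printed second expression. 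Your own proposed sanity check on $H=\C G$ detects this immediately: for $x=g$ and $\alpha=e_h$ one has $(e_h g)^*=g^{-1}e_h=e_{g^{-1}hg}\,g^{-1}$, which agrees with the first form and with $\big(\ant^{-1}(x_1)\rightharpoonup\alpha\leftharpoonup x_3\big)^*x_2^*$, whereas the printed second expression evaluates to $(g\rightharpoonup e_h\leftharpoonup g^{-1})^*g^{-1}=e_{ghg^{-1}}\,g^{-1}$, which differs for non-abelian $G$. Thus your method is sound, but completing the bookkeeping you yourself flagged as delicate would have revealed that the second equality as printed is a misprint: the placement of $\ant^{-1}$ --- equivalently of $x_1$ versus $x_3$ --- is swapped.
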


\subsection{Yetter-Drinfel'd modules}
It is well known that for a finite Hopf algebra $H$, the Yetter-Drinfel'd modules over $H$ can be interpreted as modules over $D(H)$. In view of the general results just proved, the compactness of $H$ (in the finite case) can be expressed in terms of the category of $D(H)$--modules.
Hence, one produces a characterization of the compactness of a finite $*$-Hopf algebra in terms of its category of Yetter-Drinfel'd modules.

Assume that $H$ is also endowed with a $*$--operation and take in $D(H)$ the involution defined above.  In this situation, not only a vector space can be equivalently viewed as a Yetter--Drinfeld module or as a $D(H)$--module but it is also equivalent for an hermitian form to be invariant for the module and comodule structures than to be invariant for the $D(H)$--structure. Recall the definition of inner product invariant for an action that appears in Observation \ref{rem:*representation}.

\begin{prop}
Let $H$ be a finite $*$--Hopf algebra and take $V\in {}^H_H{\mathcal {YD}}$. Assume that $\langle\,\, ,\, \rangle$ is an hermitian form in $V$. Then, $\langle\,\, ,\, \rangle$ is
invariant for the $D(H)$--module structure if and only if it is simultaneously invariant for the $H$--module and for the $H$--comodule structures.
\end{prop}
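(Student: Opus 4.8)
The plan is to compare directly the two invariance conditions, translating the $D(H)$--module invariance into its $H^{\vee\cop}$-- and $H$--components via the inclusion of Hopf subalgebras $H^{\vee\cop}\subset D(H)\supset H$. Since $V\in {}^H_H\mathcal{YD}$ corresponds to a $D(H)$--module where $H$ acts as the original action and the $H^{\vee\cop}$--action encodes the $H$--coaction, and since $D(H)=H^{\vee\cop}H$ is generated by these two subalgebras, an hermitian form is invariant for the $D(H)$--action if and only if it is invariant for both the $H$--action and the $H^{\vee\cop}$--action; this is just the observation that the $*$--operation on $D(H)$ restricts to the $*$--operations on the two subalgebras (Corollary \ref{coro:double-cqg} and the Observation following it), so $\langle d\cdot u,v\rangle=\langle u,d^*\cdot v\rangle$ for all $d\in D(H)$ iff it holds for $d$ in a generating set.

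The first half is then immediate: invariance for the $H$--module structure is literally the restriction of $D(H)$--invariance to $H\subset D(H)$. For the second half I would invoke Observation \ref{rem:*representation}: a left $H$--comodule $V$ with hermitian form $\langle\,\, ,\, \rangle$ is a unitary comodule --equivalently $\langle\,\, ,\, \rangle$ is invariant for the comodule structure-- if and only if the associated left $H^\vee$--module structure is a $*$--representation, i.e. $\langle \beta\cdot u,v\rangle=\langle u,\beta^*\cdot v\rangle$ for all $\beta\in H^\vee$. The remaining point is a bookkeeping check that the $H^\vee$--module structure on $V$ coming from the Yetter--Drinfel'd coaction agrees, up to the identification $H^{\vee\cop}\hookrightarrow D(H)$ and the $*$--structures, with the restriction of the $D(H)$--module to $H^{\vee\cop}$; and that the involution \eqref{eq:*-dual} on $H^\vee$ is exactly the one induced on the subalgebra $H^{\vee\cop}\subset D(H)$ from the $*$--structure on $D(H)$ described in the Observation above. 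Both identifications are standard and appear implicitly in the construction of $D(H)$.

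Concretely, write the $D(H)$--invariance as $\langle (\alpha x)\cdot u,v\rangle = \langle u,(\alpha x)^*\cdot v\rangle$ for all $\alpha\in H^\vee$, $x\in H$. Putting $\alpha=\varepsilon$ gives the $H$--part, $\langle x\cdot u,v\rangle=\langle u,x^*\cdot v\rangle$; putting $x=1$ gives the $H^\vee$--part, and using the formula for $(\alpha 1)^*$ from the Observation after Corollary \ref{coro:double-cqg} --which reduces to $\alpha^*$ in $H^{\vee\cop}$-- one recovers exactly the $*$--representation condition of Observation \ref{rem:*representation}, hence the comodule invariance \eqref{eq:pi-*-invarianteleft}. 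Conversely, given both conditions, multiplicativity of the $*$--map and of the action lets one reassemble $\langle (\alpha x)\cdot u,v\rangle = \langle (\alpha\cdot(x\cdot u)),v\rangle = \langle x\cdot u,\alpha^*\cdot v\rangle = \langle u, x^*\cdot(\alpha^*\cdot v)\rangle = \langle u,(x^*\alpha^*)\cdot v\rangle = \langle u,(\alpha x)^*\cdot v\rangle$, using $(\alpha x)^*=x^*\alpha^*$ in $D(H)$.

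**Main obstacle.**
The only genuinely delicate point is the compatibility of the three $*$--structures --the one on $H^\vee$ from \eqref{eq:*-dual}, the one induced on $H^{\vee\cop}\subset D(H)$, and the passage through the opposite-coalgebra twist in $H^{\vee\cop}$-- together with the fact that the Yetter--Drinfel'd coaction is matched to the $H^{\vee\cop}$--action rather than the $H^\vee$--action. I expect this bookkeeping, rather than any substantial idea, to be where care is needed; once the identifications are pinned down, the equivalence falls out of Observation \ref{rem:*representation} and the multiplicativity argument above.
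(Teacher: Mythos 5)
Your proposal is correct and follows essentially the same route as the paper: restrict the $D(H)$--invariance to the two generating $*$--subalgebras $H$ and $H^{\vee}$ of $D(H)=H^{\vee}H$, translate the $H^{\vee}$--module invariance into $H$--comodule invariance via the observation on $*$--representations, and reassemble the two conditions using antimultiplicativity of $*$ for the converse. The paper is just as terse as you are about the compatibility of the $*$--structure on $H^{\vee\cop}\subset D(H)$ with the dual involution and the Yetter--Drinfel'd identification, so no further comparison is needed.
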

\dem
Assume first that $\langle\,\, ,\, \rangle$ is invariant for the $D(H)$-module structure:
\[
\langle \alpha x\cdot u ,v \rangle = \langle u ,(\alpha x)^*\cdot v \rangle,
\quad \forall \alpha\in H^\vee,\ x\in H,\ u,v\in V.
\]
Being $H$ and $H^\vee$ $*$-subalgebras of $D(H)$, we deduce that $\langle\,\, ,\, \rangle$ is invariant for the $H$--module and for the $H^\vee$-module structures.
The fact that the  $H^\vee$-- invariance of $\langle\,\, ,\, \rangle$
is equivalent to the invariance for the corresponding $H$-comodule
structure is the content of Observation \ref{rem:*representation}.

Conversely, assume that $\langle\,\, ,\, \rangle$ is
invariant for the $H$--module and for the $H$--comodule structures. Arguing as
above we deduce that
$\langle\,\, ,\, \rangle$ is invariant for the $H^\vee$--module structure.
As $D(H)=H^\vee H$ and $H\subset D(H)$, $H^\vee\subset D(H)$ as $*$-subalgebras, we conclude that
$\langle\,\, ,\, \rangle$ is invariant for the $D(H)$--module structure.
\qed

\section{Extensions of CQG}\label{section:extensions}

We consider in this section the case of extensions of Hopf algebras and more particularly, the case of cleft and cocleft
extensions. It is proved in \cite{kn:andrus-devoto}, that cleft and cocleft extensions are exactly the Hopf algebras
arising from what we call here a cocycle linked pair of Hopf algebras (by duality this concept is closely related to
the notion of matched pair of Hopf algebras --see Observation \ref{rem:bi-algcoalg}).
We consider the case in which the Hopf algebras involved are CQG, and --inspired on the results of \cite{kn:andrus2}--give necessary and sufficient conditions for the product to be a $*$-Hopf algebra and a CQG.

\smallskip
A \emph{cocycle linked pair of bialgebras} (\emph{of Hopf algebras})
is a sextuple $(A,H,\trb,\rho,\chi,\psi)$, where $A$ and $H$ are bialgebras (Hopf algebras) and
\begin{equation} \label{def-clpb}
\begin{array}{c}
H\otimes A\stackrel{\trb}{\to}A\\
x\otimes a \mapsto x\trb a
\end{array}
,\quad
\begin{array}{c}
 H\stackrel{\rho}{\to}H\otimes A\\
 x\mapsto \sum x_H\otimes x_A
\end{array},
\begin{array}{c}
H\otimes H\stackrel{\chi}{\to}A\\
x\otimes y \mapsto \chi(x,y)
\end{array}
,\qquad
\begin{array}{c}
 H\stackrel{\psi}{\to}A\otimes A\\
 x\mapsto \sum x_I\otimes x_{II}
\end{array},
\end{equation}
are linear maps such that: (compare with \cite{kn:andrus-devoto}, \cite{kn:andrus2} and \cite{kn:Majid})
\begin{enumerate}
\item
$(A,\trb,\chi)$ is a \emph{cocycle left $H$-module algebra}:
\begin{align}
x\trb 1  &= \varepsilon(x)1, \label{a1}  \\
x\trb ab &= \sum (x_1\trb a)(x_2\trb b), \label{a2} \\
1\trb a &= a, \label{a3} \\
\sum \big( x_1\trb (y_1\trb a)\big)\,\chi(x_2,y_2)
&=
\sum \chi(x_1,y_1)\,\big( (x_2 y_2)\trb a\big), \label{a4} \\
\sum \big( x_1\trb \chi(y_1,z_1)\big)\,\chi(x_2,y_2z_2)
&=
\sum \chi(x_1,y_1)\, \chi(x_2 y_2,z), \label{a5} \\
\chi(x,1)&= \chi(1,x)=\varepsilon(x)1, \label{a6}
\end{align}
for all $x,y,z\in H$, $a,b\in A$.
\item
$(H,\rho,\psi)$ is a \emph{cocycle right $A$-comodule coalgebra}:
\begin{align}
\sum\varepsilon(x_H)x_A
&=
\varepsilon(x)1 , \label{b1} \\
\sum x_{H1} \otimes x_{H2} \otimes x_A
&=
\sum x_{1H} \otimes x_{2H} \otimes x_{1A}x_{2A}, \label{b2} \\
\sum x_H\varepsilon(x_A)
&=
x, \label{b3} \\
\sum x_{2HH} \otimes x_{1I} x_{2HA} \otimes x_{1II} x_{2A}
&=
\sum x_{1H} \otimes x_{1A1} x_{2I} \otimes x_{1A2} x_{2II}, \label{b4} \\
\sum x_{1I1} x_{2HI} \otimes x_{1I2} x_{2HII}
\otimes x_{1II} x_{2A}
&=
\sum x_{1I} \otimes x_{1II1} x_{2I} \otimes  x_{1II2}x_{2II},  \label{b5} \\
\sum\varepsilon(x_I)x_{II}
&=
\sum x_I\varepsilon(x_{II})=\varepsilon(x)1, \label{b6}
\end{align}
for all $x\in H$.

\item
the maps $\trb,\ \rho,\ \chi,\ \psi$ verify the following compatibility relations:
\begin{align}
\varepsilon(x\trb a) 		
&=
\varepsilon(x)\varepsilon(a), \label{c1}\\
\sum 1_A\otimes 1_H
&=
1\otimes 1 ,\label{c2}\\
\varepsilon\big(\chi(x,y)\big) 		
&=
\varepsilon(x)\varepsilon(y), \label{c3}\\
\sum 1_I\otimes 1_{II}
&=
1\otimes 1 ,\label{c4}\\
\sum (x_1\trb a)_1 x_{2I} \otimes (x_1\trb a)_2 x_{2II}&=
\sum x_{1I} (x_{2H}\trb a_1) \otimes x_{1II} x_{2A} (x_3\trb a_2), \label{c5}\\
\sum (x_2y_2)_H\otimes \chi(x_1,y_1) (x_2y_2)_A	
&=
\sum x_{1H}\, y_{1H} \otimes x_{1A} (x_2\trb y_{1A})\chi(x_3,y_2), \label{c6}\\
\sum x_{2H} \otimes (x_1\trb a)x_{2A}
&=
\sum x_{1H} \otimes x_{1A} (x_2\trb a), \label{c7} \\
\sum \chi(x_1,y_1)_1 (x_2y_2)_I \otimes \chi(x_1,y_1)_2 (x_2y_2)_{II}
&=
\sum x_{1I} (x_{2H}\trb y_{1I}) \chi(x_{4H}, y_{2H}) \otimes \notag \\
& \ \ \ \
x_{1II} x_{2A} (x_3\trb y_{1II}) x_{4A} (x_5\trb y_{2A}) \chi(x_6,y_3),\label{c8}
\end{align}
for all $x,y\in H$, $a\in A$.
\end{enumerate}

\begin{obse}\label{rem:action}
\begin{enumerate}
\item \label{rem:action-2} In this context , see for example \cite{kn:andrus-devoto}, \cite{kn:andrus2} and
\cite{kn:Majid} Theorem 6.3.9, one can define a bialgebra structure on $A \otimes H$, that is called the \emph{cocycle bismash product} of $A$ and $H$ and it is usually denoted by $A^\psi \#_\chi H$. The basic operations --computed on the elementary tensors of $A^\psi \#_\chi H$, that will be denoted generically as $a \# x$ with $a \in A, x \in H$-- are the following: 

\begin{itemize}
\item[{\em Product:}] $(a\# x)(b\# y) = \sum a(x_1\trb b) \chi(x_2,y_1)\# x_3 y_2$, with unit $1\#1$;
\item[{\em Coproduct:}] $\Delta(a\# x) = \sum a_1 (x_1)_I\# {(x_2)}_H \otimes a_2 (x_1)_{II} (x_2)_A\# x_3$,
with counit $\varepsilon \otimes \varepsilon$.
\end{itemize}
\item Let $(A,H,\trb,\rho,\chi,\psi)$ be a cocycle linked pair of bialgebras.
Condition \eqref{a4} shows that $\trb$ is not necessarily an action.
If $\chi$ is invertible and belongs to the center of the convolution algebra $\Hom(H\otimes H,A)$, then
$\chi$ cancels in \eqref{a4} and $\trb$ becomes an action. This is the case for example
 whenever $\chi$ is \emph{trivial}, i.e. $\chi = \varepsilon \otimes \varepsilon$  or whenever $A$ is commutative,
$H$ is cocommutative and $\chi$ is convolution invertible.
Dual considerations hold  for $\rho$ and $\psi$. This particular situation can be generalized to the notion of {\em cocycle Singer pair} introduced in Definition \ref{defi:csingerpair}. 

\item For the product of two elements of the form
$1 \# x, 1 \# y \in A^\psi \#_\chi H$, we have the following formula: $(1 \# x)(1 \# y)=\sum \chi(x_1,y_1) \# x_2y_2$.

\item The rather complicated compatibility condition \eqref{c8} can be samewhat simplified as we can write it as: 
\begin{equation}\label{eqn:19'}
\Delta \chi \star \psi m = (\psi \otimes \varepsilon) \star \theta \star (1 \otimes \chi),
\end{equation} 
with 
\begin{equation*}\theta(x,y)= \sum (x_{1H} \trb y_{1I})\chi(x_{3H},y_{2H})\otimes x_{1A}(x_2 \trb y_{1II})x_{3A}(x_4 \trb y_{2A})
\end{equation*}
\begin{equation}=\sum (x_{1H} \trb y_{1I})\chi(x_{2H},y_{2H})\otimes x_{1A}x_{2A}(x_3 \trb y_{1II})(x_4 \trb y_{2A}),
\end{equation}
where the last equality follows from a direct application of equation \eqref{c7}.
Using equations \eqref{a2} and \eqref{b2} the above expression can be transformed into: 
\begin{equation}\theta(x,y)= \sum (x_{1H1} \trb y_{1I})\chi(x_{1H2},y_{2H})\otimes x_{1A}(x_2 \trb (y_{1II}y_{2A}))\label{eqn:imp}.
\end{equation}
\end{enumerate}

\end{obse}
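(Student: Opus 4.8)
The provable content here is item (4) of the observation: that the cumbersome compatibility axiom \eqref{c8} is nothing but the convolution identity \eqref{eqn:19'} once $\theta$ is taken as displayed, and that the three formulas given for $\theta$ coincide. (Items (1), (2) and the cocycle--bismash formulas are quoted from the cited sources, and item (3) is just the product formula of item (1) evaluated at $a=b=1$, with $x_1\trb 1$ replaced by $\varepsilon(x_1)1$ via \eqref{a1}, so I would dispatch them in a line.) Everything below is a bookkeeping exercise with Sweedler indices, coassociativity and the axioms \eqref{a2}, \eqref{b2}, \eqref{c7}; there is no conceptual content, only the discipline of keeping the legs of the iterated coproducts straight.

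First I would check that \eqref{eqn:19'} literally encodes \eqref{c8}. All the maps in \eqref{eqn:19'} are morphisms $H\otimes H\to A\otimes A$, with $\star$ taken for the tensor-coalgebra structure on $H\otimes H$ and the tensor-algebra structure on $A\otimes A$, and with the conventions $(\psi\otimes\varepsilon)(x\otimes y)=\varepsilon(y)\psi(x)$ and $(1\otimes\chi)(x\otimes y)=1\otimes\chi(x,y)$. Evaluating the left-hand side at $x\otimes y$ gives $\sum\Delta(\chi(x_1,y_1))\,\psi(x_2y_2)=\sum\chi(x_1,y_1)_1(x_2y_2)_I\otimes\chi(x_1,y_1)_2(x_2y_2)_{II}$, which is exactly the left-hand side of \eqref{c8}. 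For the right-hand side one feeds the first leg of $x$ to $\psi$, kills the first leg of $y$ against $\varepsilon$, feeds the middle legs to $\theta$ and the last legs to $1\otimes\chi$; expanding $\theta$ by its first displayed formula (coproduct of degree three in the first slot, degree one in the second) and flattening the nested coproducts by coassociativity against $\sum x_1\otimes\cdots\otimes x_6$ and $\sum y_1\otimes y_2\otimes y_3$, then multiplying out in $A\otimes A$, one recovers $\sum x_{1I}(x_{2H}\trb y_{1I})\chi(x_{4H},y_{2H})\otimes x_{1II}x_{2A}(x_3\trb y_{1II})x_{4A}(x_5\trb y_{2A})\chi(x_6,y_3)$, i.e. the right-hand side of \eqref{c8}. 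Hence the two conditions are equivalent.

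Next I would show the first two expressions for $\theta$ agree. In the first, $\rho$ is applied to the first and third legs of $x$, and the point is to slide it from the third leg onto the second. Grouping the middle two legs into a block $z$ with $\Delta(z)=x_2\otimes x_3$, this block contributes $z_{2H}$ inside $\chi(-,y_{2H})$ in the first tensor slot and $(z_1\trb y_{1II})z_{2A}$ wedged between $x_{1A}$ and $(x_4\trb y_{2A})$ in the second; axiom \eqref{c7}, applied to $z$ with $a=y_{1II}$, rewrites $\sum z_{2H}\otimes(z_1\trb y_{1II})z_{2A}$ as $\sum z_{1H}\otimes z_{1A}(z_2\trb y_{1II})$, and since the surrounding factors $x_{1A}$, $(x_4\trb y_{2A})$ and the functional $\chi(-,y_{2H})$ depend on legs disjoint from $z$, the substitution is legitimate and produces the second form. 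Finally, to pass from the second form to \eqref{eqn:imp} I would merge the third and fourth legs of $x$ by reading \eqref{a2} backwards — writing $w$ for the un-split leg, $(x_3\trb y_{1II})(x_4\trb y_{2A})=w\trb(y_{1II}y_{2A})$ — which leaves $\rho$ on the first two legs only, and then merge those two by reading \eqref{b2} backwards, $\sum x_{1H}\otimes x_{2H}\otimes x_{1A}x_{2A}=\sum x_{H1}\otimes x_{H2}\otimes x_A$, turning $(x_{1H}\trb y_{1I})\chi(x_{2H},y_{2H})\otimes x_{1A}x_{2A}(w\trb(y_{1II}y_{2A}))$ into $(x_{1H1}\trb y_{1I})\chi(x_{1H2},y_{2H})\otimes x_{1A}(x_2\trb(y_{1II}y_{2A}))$, which is exactly \eqref{eqn:imp}.

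The main obstacle, such as it is, is purely organisational: each invocation of \eqref{c7}, \eqref{a2}, \eqref{b2} must be performed on a set of legs on which every surrounding factor is constant, so that the axiom — an identity in $H\otimes A$, in $H$, and in $H\otimes H\otimes A$ respectively — can be plugged in verbatim. Once the block decompositions above are fixed this holds automatically, so I do not expect any genuine difficulty beyond the length of the index manipulations.
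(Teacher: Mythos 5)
Your verification is correct and follows exactly the route the paper indicates: unpacking the convolution product $(\psi\otimes\varepsilon)\star\theta\star(1\otimes\chi)$ to recover \eqref{c8}, then applying \eqref{c7} on the middle block of legs to pass between the two displayed forms of $\theta$, and finally reading \eqref{a2} and \eqref{b2} backwards to merge legs and obtain \eqref{eqn:imp}. The Sweedler bookkeeping (six legs of $x$, three of $y$ after the counit kills one) matches the paper's formulas, so there is nothing to add.
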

\medskip

Let
\(
(M):\ A\stackrel{\iota}{\to}M\stackrel{\pi}{\to}H
\)
be a sequence of Hopf algebras and Hopf algebra maps.
In this situation the correstriction
$m\mapsto \sum m_1\otimes \pi(m_2): M \rightarrow M \otimes H$ endows $M$ with a structure of right $H$-comodule algebra
and the restriction $a\otimes m\mapsto \iota(a)m: A \otimes M \rightarrow M$ endows $M$ with a structure of a left
$A$-module coalgebra.
The space of right coinvariants of $M$ is $M^{\rm{co}H}=\{m\in M:\ \sum m_1\otimes\pi(m_2)=m\otimes 1\}$.

\begin{defi}[\cite{kn:andrus-devoto}] \label{def:exacta}
We say that $(M)$ is \emph{exact} and that $M$ is an \emph{extension} of $H$ by $A$ if
\begin{enumerate}
\item
$\iota$ is injective,
\item
$\pi$ is surjective,
\item
$\Im(\iota)=M^{\rm{co}H}$,
\item
$\Ker(\pi)=MA^+$, being $A^+$ the kernel of the counit of $A$.
\end{enumerate}

An \emph{equivalence} of exact sequences $(M)$ and $(M')$ is a Hopf algebra map $M\to M'$ which induces the identity map
in $A$ and $H$. This algebra map is necessarily bijective.

The extension is \emph{cleft} if there exists a convolution invertible $H$-comodule map $H\to M$, and is
\emph{cocleft} if there exists a convolution invertible $A$-module map $M\to A$.
\end{defi}

\begin{prop} [Lemma 3.2.17, \cite{kn:andrus-devoto}] \label{teo:ext-hopf}
Let $(A,H,\trb,\rho,\chi,\psi)$ be a cocycle linked pair of Hopf algebras,
where $\chi:H\otimes H\to A$ and $\psi:H\to A\otimes A$ are convolution invertible.
Then $A^\psi\#_\chi H$ is a Hopf algebra and
\[
A\stackrel{\iota_A}{\longrightarrow}A^\psi\#_\chi H\stackrel{\pi_H}{\longrightarrow}H
\]
is a cleft and cocleft extension of Hopf algebras.
Conversely, any cleft and cocleft extension of Hopf algebras $A\to M\to H$ is equivalent to $A^\psi\#_\chi H$,
for some $\trb,\rho,\chi,\psi$ as above, where $\chi$ and $\psi$ are convolution invertible.

In this situation, if we write the inverses of $\chi$ and $\psi$ as $x\otimes y \mapsto \chi^{-1}(x,y)$
and $x\mapsto \sum x_{\widehat I}\otimes x_{\widehat{II}}$ respectively,

then the antipode of $A^\psi\#_\chi H$ is given by
\begin{align*}
\ant(a\# x)&=\sum \Big(\chi^{-1}\big(\ant (x_{2H}), x_{3H}\big) \# \ant(x_{1H}) \Big)
\Big(x_{4\widehat I}\,\ant\big(ax_{1A}x_{2A}x_{3A}
x_{4\widehat{II}}\big)\# 1\Big)\\
&=\sum \Big(\chi^{-1}\big(\ant (x_{1H2}), x_{1H3}\big) \# \ant(x_{1H1}) \Big)
\Big(x_{2\widehat I}\,\ant\big(ax_{1A}
x_{2\widehat{II}}\big)\# 1\Big)
\end{align*}
\fin
\end{prop}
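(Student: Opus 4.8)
The plan is to separate the two ``halves'' of the structure of $A^\psi\#_\chi H$: forgetting $\psi$, its algebra structure is the crossed product $A\#_\chi H$ of $A$ by the cocycle left $H$-module algebra structure $(\trb,\chi)$, and forgetting $\chi$, its coalgebra structure is the crossed coproduct of $H$ by the cocycle right $A$-comodule coalgebra structure $(\rho,\psi)$. That $A^\psi\#_\chi H$ is a bialgebra is recorded in Observation~\ref{rem:action}. To see that it is a Hopf algebra, one checks directly that the map displayed in the statement is a two-sided convolution inverse of $\id_{A^\psi\#_\chi H}$, hence an antipode; the verification is long but is organized by the two halves just mentioned --- the $\chi^{-1}$-factor is of ``crossed-product type'' while the $\psi^{-1}$-terms absorb the ``crossed-coproduct data'' --- the mixed compatibilities \eqref{c5}--\eqref{c8} being exactly what merges them, after a reorganization of coproducts via \eqref{b2} and \eqref{c7}. (Alternatively, existence of an antipode follows from the cleftness established below, and its closed form by a further computation.)

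To exhibit the extension, put $\iota_A(a)=a\# 1$ and $\pi_H(a\# x)=\varepsilon_A(a)x$; by the product and coproduct formulas of Observation~\ref{rem:action} these are bialgebra maps, $\iota_A$ is injective and $\pi_H$ surjective. Computing the associated coaction $a\# x\mapsto\sum(a_1\# x_1)\otimes\pi_H(a_2\# x_2)$ and simplifying with \eqref{b3} and \eqref{b6}, one finds it equals $\sum(a\# x_1)\otimes x_2$, so the coinvariants are exactly $A\# 1=\Im(\iota_A)$; the equality $\Ker(\pi_H)=(A^\psi\#_\chi H)A^+$ then follows from the product formula together with the fact that $A^\psi\#_\chi H$ is generated by $A\# 1$ and $1\# H$. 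Cleftness is witnessed by $j\colon H\to A^\psi\#_\chi H$, $j(x)=1\# x$: a short computation with \eqref{b3} and \eqref{b6} shows that $j$ is a right $H$-comodule map, and it is convolution invertible with inverse built from $\chi^{-1}$ and $\psi^{-1}$. Cocleftness is witnessed dually by $q\colon A^\psi\#_\chi H\to A$, $q(a\# x)=\varepsilon_H(x)a$, a convolution invertible left $A$-module map. This settles the direct implication.

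For the converse, begin with a cleft and cocleft extension $A\xrightarrow{\iota}M\xrightarrow{\pi}H$. Fix a cleaving map $j\colon H\to M$ --- a convolution invertible right $H$-comodule map, which we may normalize so that $j(1)=1$ --- and a cocleaving map $q\colon M\to A$, a convolution invertible left $A$-module map normalized so that $q\iota=\id_A$, and identify $A$ with $\iota(A)=M^{\mathrm{co}H}\subseteq M$. Define $x\trb a=\sum q\big(j(x_1)\,a\,j^{-1}(x_2)\big)$ and $\chi(x,y)=\sum q\big(j(x_1)j(y_1)j^{-1}(x_2y_2)\big)$, and let $\rho\colon H\to H\otimes A$ and $\psi\colon H\to A\otimes A$ be the dual data read off from $\pi$ and $q$ applied to comultiplications of the elements $j(x)$ (they record, respectively, the interaction of $\pi$ with $q$ and the failure of $j$ to be comultiplicative). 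One then verifies that $(A,H,\trb,\rho,\chi,\psi)$ is a cocycle linked pair, i.e.\ satisfies \eqref{a1}--\eqref{c8}, using repeatedly that $j$ is a comodule map, $q$ a module map, and that $\iota(A)=M^{\mathrm{co}H}$, $\Ker\pi=MA^+$. Finally $a\# x\mapsto\iota(a)j(x)$ is a bialgebra isomorphism $A^\psi\#_\chi H\to M$ inducing the identity on $A$ and on $H$; as an equivalence of extensions it is automatically bijective, so $M$ is a Hopf algebra with the antipode transported from the explicit map found above.

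The hard part is the bookkeeping in the converse: verifying the mixed conditions \eqref{c5}--\eqref{c8}, which interlock the pair $(\trb,\chi)$ with the pair $(\rho,\psi)$ --- equivalently, in the direct statement, checking that the candidate antipode really satisfies $\ant\star\id=\id\star\ant=u\varepsilon$. These identities encode exactly the requirement that $M$ be simultaneously a crossed product (from cleftness) and a crossed coproduct (from cocleftness), and establishing them rests on combining the comodule-map property of $j$ with the coinvariance $\iota(A)=M^{\mathrm{co}H}$.
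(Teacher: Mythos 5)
The paper itself gives no proof of this proposition: it is quoted, with the terminal $\qed$, from \cite{kn:andrus-devoto} (Lemma 3.2.17), so there is no argument in the text to compare against line by line. Your outline follows the same standard route as that reference (and as \cite{kn:Majid}, Theorem 6.3.9): the algebra half is the crossed product governed by $(\trb,\chi)$, the coalgebra half the crossed coproduct governed by $(\rho,\psi)$, cleftness is witnessed by $x\mapsto 1\# x$ and cocleftness by $a\# x\mapsto \varepsilon(x)a$, and the converse transports the structure through $a\#x\mapsto\iota(a)j(x)$ using a normalized cleaving map $j$ and cocleaving map $q$. As a plan this is the right one.

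As a proof, however, the substantive content is deferred rather than established. First, the explicit antipode formula is the only part of the statement going beyond the bialgebra structure already recorded in Observation \ref{rem:action}, and you never derive or verify it: ``one checks directly that it is a two-sided convolution inverse'' and ``its closed form follows by a further computation'' merely restate the claim. A proof must either carry out the convolution computation $\ant\star\id=\id\star\ant=u\varepsilon$ (this is where \eqref{a5}, \eqref{b2}--\eqref{b4} and \eqref{c6}--\eqref{c8} actually enter), or obtain $\ant(1\# x)$ as the convolution inverse of the cleaving map and then get $\ant(a\# x)=\ant(1\# x)\big(\ant_A(a)\#1\big)$ from $a\# x=(a\#1)(1\# x)$; neither is done, and the two displayed expressions are not reconciled. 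Second, in the converse you define $\trb$ and $\chi$ explicitly but $\rho$ and $\psi$ only impressionistically (``dual data read off from $\pi$ and $q$''); without formulas such as $\rho=(\pi_H\otimes q\,)\Delta_M j$ and its $\psi$-analogue, the axioms \eqref{b1}--\eqref{b6} and the mixed conditions \eqref{c5}--\eqref{c8} --- which you yourself identify as ``the hard part'' --- cannot even be stated, let alone checked; verifying them is precisely the content of the cited lemma. A smaller soft spot: your product-formula argument shows $\Ker(\pi_H)=A^+\cdot(A^\psi\#_\chi H)$, whereas Definition \ref{def:exacta} asks for $(A^\psi\#_\chi H)A^+$; identifying the two requires an extra step (normality of $A\#1$, or the equivalence of conditions (3) and (4) of Definition \ref{def:exacta} under the faithful (co)flatness supplied by cleftness, as in \cite{kn:masuoka}).
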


It is well known (see Theorem 3.3 in \cite{kn:schneider}) that an exact sequence of finite Hopf algebras is always cleft and cocleft, and --being $\iota$ injective
and $\pi$ surjective-- conditions (3) and (4) in Definition \ref{def:exacta} are equivalent (\cite{kn:masuoka}).

\begin{obse} \label{rem:ext}
In the context considered above, the bialgebra $A^\psi\#_\chi H$ can be endowed with natural structures of left $A$-module
and a right $H$-comodule as follows:
\[
a'\cdot(a\# x)=a'a\# x,\qquad \delta(a\# x)=\sum a\# x_1\otimes x_2,\quad \forall a\in A,\ x\in H.
\]
Observe that $a'\cdot(a\# x)= (a' \# 1)(a\# x)$, and
$\delta= \big((\id_A \otimes \id_H) \otimes (\varepsilon \otimes \id_H)\big)\Delta$.

If we consider the natural inclusion and projection maps
\[
\begin{array}{c}
A\stackrel{\iota_{_A}}{\longrightarrow} A^\psi\#_\chi H\\
a\mapsto a\# 1
\end{array}
,\quad
\begin{array}{c}
H\stackrel{\iota_{_H}}{\longrightarrow} A^\psi\#_\chi H\\
x\mapsto 1\# x
\end{array}
,\quad
\begin{array}{c}
A^\psi\#_\chi H\stackrel{\pi_{_A}}{\longrightarrow} A\\
a\# x\mapsto a\varepsilon(x)
\end{array}
,\quad
\begin{array}{c}
A^\psi\#_\chi H\stackrel{\pi_{_H}}{\longrightarrow} H\\
a\# x\mapsto \varepsilon(a)x
\end{array}
,
\]
then, $\iota_A$ and $\pi_H$ are bialgebra maps, $\iota_H$ is a morphism of $H$-comodules and $\pi_A$ is
morphism of $A$-modules.
\end{obse}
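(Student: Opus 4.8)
The plan is to verify each assertion by a direct computation from the explicit formulas for the product and coproduct of $A^\psi\#_\chi H$ recorded in Observation \ref{rem:action}; all of it is routine once a handful of ``unit'' and ``counit'' identities are isolated first. Specifically, I would begin by recording the following immediate consequences of the axioms: from \eqref{c2} and \eqref{c4} one has $\rho(1)=1\otimes 1$ and $\psi(1)=1\otimes 1$; applying $\varepsilon$ to \eqref{b1} and to \eqref{b6} gives $(\varepsilon\otimes\varepsilon)\rho=\varepsilon$ and $(\varepsilon\otimes\varepsilon)\psi=\varepsilon$; and \eqref{b3} reads $\sum x_H\varepsilon(x_A)=x$. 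On the $A$-side the relevant facts are $1\trb a=a$ from \eqref{a3}, $\chi(1,x)=\chi(x,1)=\varepsilon(x)1$ from \eqref{a6}, and the counit compatibilities $\varepsilon(x\trb a)=\varepsilon(x)\varepsilon(a)$ and $\varepsilon(\chi(x,y))=\varepsilon(x)\varepsilon(y)$ from \eqref{c1} and \eqref{c3}.

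With these in hand the module and comodule structures are immediate: $a'\cdot(a\#x)=a'a\#x$ satisfies the module axioms because the product of $A$ is associative and unital, and $\delta(a\#x)=\sum a\#x_1\otimes x_2$ satisfies coassociativity and the counit axiom because $\Delta_H$ does. The identification $a'\cdot(a\#x)=(a'\#1)(a\#x)$ follows by expanding the product formula with the first factor equal to $a'\#1$ and using $\Delta_H(1)=1\otimes1$, $1\trb a=a$ and $\chi(1,x_1)=\varepsilon(x_1)1$. For the formula $\delta=\big((\id_A\otimes\id_H)\otimes(\varepsilon_A\otimes\id_H)\big)\Delta$ I would start from $\Delta(a\#x)=\sum a_1(x_1)_I\# (x_2)_H\otimes a_2(x_1)_{II}(x_2)_A\# x_3$, apply $\varepsilon_A\otimes\id_H$ to the right-hand tensorand, collapse $a_2$ against $a_1$, collapse $(x_1)_{II}$ against $(x_1)_I$ via $\sum x_I\varepsilon(x_{II})=\varepsilon(x)1$ from \eqref{b6}, and finally collapse $(x_1)_A$ against $(x_1)_H$ via \eqref{b3}; what remains is exactly $\sum a\#x_1\otimes x_2$.

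Next I would check that $\iota_A\colon a\mapsto a\#1$ and $\pi_H\colon a\#x\mapsto\varepsilon(a)x$ are bialgebra maps. Multiplicativity and unitality of $\iota_A$ come from $1\trb a=a$ and $\chi(1,1)=1$; comultiplicativity follows by inserting $\rho(1)=1\otimes1$ and $\psi(1)=1\otimes1$ into the coproduct formula, which reduces $\Delta(a\#1)$ to $\sum (a_1\#1)\otimes(a_2\#1)$, while $\varepsilon(a\#1)=\varepsilon(a)$ is clear. For $\pi_H$, multiplicativity is a one-line computation applying $\varepsilon(x\trb b)=\varepsilon(x)\varepsilon(b)$ and $\varepsilon(\chi(x,y))=\varepsilon(x)\varepsilon(y)$ to the product formula, and comultiplicativity follows by applying $\pi_H\otimes\pi_H$ to $\Delta(a\#x)$ and collapsing the $\psi$- and $\rho$-legs using $(\varepsilon\otimes\varepsilon)\psi=\varepsilon$ and \eqref{b3}. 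Finally, $\iota_H\colon x\mapsto 1\#x$ intertwines $\Delta_H$ with $\delta$ by the very definition of $\delta$, since $\delta(1\#x)=\sum 1\#x_1\otimes x_2=(\iota_H\otimes\id_H)\Delta_H(x)$, and $\pi_A\colon a\#x\mapsto a\varepsilon(x)$ intertwines $\cdot$ with left multiplication in $A$ because $\pi_A(a'\cdot(a\#x))=a'a\varepsilon(x)=a'\,\pi_A(a\#x)$.

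There is no genuine obstacle here; the whole statement is bookkeeping. The only points requiring a little care are the derivation of the formula for $\delta$ and the comultiplicativity of $\iota_A$ and $\pi_H$, where one must invoke the correct counit-type axioms (\eqref{b1}, \eqref{b3}, \eqref{b6}, \eqref{c2}, \eqref{c4}) in the right tensor slots; everything else is formal.
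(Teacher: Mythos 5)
Your verification is correct and is exactly the routine check the paper leaves implicit (the Observation is stated without proof): the identities \eqref{a3}, \eqref{a6}, \eqref{b3}, \eqref{b6}, \eqref{c1}--\eqref{c4} are the right ingredients, and each step you describe goes through on the explicit product and coproduct formulas. The only blemish is a harmless index slip in the derivation of $\delta$, where the $\rho$-legs to be collapsed via \eqref{b3} are $(x_2)_H$ and $(x_2)_A$, not $(x_1)_H$ and $(x_1)_A$.
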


\begin{lema} \label{lema:cocycle-bismash-coss}
Let $(A,H,\trb,\rho,\chi,\psi)$ be a cocycle linked pair of cosemisimple bialgebras and call
$\varphi_A:A^\vee \rightarrow \C$ and $\varphi_H: H^\vee \rightarrow \C$ its corresponding normal integrals.
Then, the linear map $\Phi : A\# H \rightarrow \C$ defined by $\Phi(a\# x)=\varphi_A(a)\varphi_H(x)$ is a normal integral on
$A^\psi\#_\chi H$ that is then cosemisimple.
\end{lema}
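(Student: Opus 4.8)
The plan is to mimic the proof of Lemma \ref{lema:integral}, where the analogous statement was established for the (non-cocycle) bicrossproduct $A\bowtie H$. First I would verify directly that $\Phi = \varphi_A \otimes \varphi_H$ is a normal integral on $A^\psi\#_\chi H$; recall from Theorem \ref{teo:semisimple} that a linear functional satisfying $\Phi(1\#1)=1$ together with the left and right integral identities is automatically normal, and normality gives cosemisimplicity via the same theorem. Since $\Phi(1\#1)=\varphi_A(1)\varphi_H(1)=1$, it remains to check the integral equations.

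The key computation is to apply $(\id \otimes \Phi)\Delta$ to a general element $a\# x$ and show it equals $\Phi(a\# x)(1\#1)$, and symmetrically for the right-integral identity. Using the coproduct formula from Observation \ref{rem:action}\eqref{rem:action-2},
\[
\Delta(a\# x) = \sum a_1 (x_1)_I\# (x_2)_H \otimes a_2 (x_1)_{II} (x_2)_A\# x_3,
\]
applying $\id\otimes\Phi$ to the right-hand tensorand gives
\[
\sum a_1 (x_1)_I\# (x_2)_H \,\, \varphi_A\big(a_2 (x_1)_{II} (x_2)_A\big)\,\varphi_H(x_3).
\]
One then uses that $\varphi_A$ is a left (and right) integral on $A$ and $\varphi_H$ on $H$, together with the counit normalizations \eqref{b1}, \eqref{b3}, \eqref{b6}, \eqref{c2}, \eqref{c4} for $\rho$ and $\psi$ (which say that $\rho$ and $\psi$ are "counital" in the appropriate sense), to collapse the sums. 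Concretely, the integral property of $\varphi_A$ converts the $a_2$-summand into a scalar times $1_A$, after which the remaining $\psi$- and $\rho$-terms are killed by their counit axioms, reducing the expression to $\varphi_A(a)\varphi_H(x)(1\#1)$. The right-hand integral identity is handled the same way using that $\varphi_A,\varphi_H$ are right integrals and the remaining counit axioms.

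The main obstacle — and the only place requiring care — is bookkeeping the Sweedler indices so that the counit axioms \eqref{b1}--\eqref{b6} and \eqref{c1}--\eqref{c4} apply cleanly after $\varphi_A$ has been used to contract $a_2$ against the $\psi$/$\rho$-terms; in particular one must check that no genuine use of the (complicated) cocycle-compatibility conditions \eqref{b4}, \eqref{b5}, \eqref{c5}--\eqref{c8} is needed, only the coalgebra and counit axioms. I would record that the proof "follows directly from the definitions" as in Lemma \ref{lema:integral}, possibly displaying the one chain of equalities above for clarity. Finally, cosemisimplicity of $A^\psi\#_\chi H$ is immediate from Theorem \ref{teo:semisimple} once the normal integral $\Phi$ is exhibited.
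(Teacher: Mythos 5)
Your verification of the left-integral identity is exactly the paper's computation: apply $\id\otimes\Phi$ to $\Delta(a\# x)$, use that $\varphi_H$ is a left integral to collapse $x_1\otimes x_2\otimes\varphi_H(x_3)$ to $\varphi_H(x)\,1\otimes 1$, then use only \eqref{c2} and \eqref{c4} (i.e.\ $\rho(1)=1\otimes1$ and $\psi(1)=1\otimes1$) and the left-integral property of $\varphi_A$; no cocycle compatibilities enter, as you correctly anticipate.

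The gap is your claim that the right-integral identity ``is handled the same way.'' It is not: applying $\Phi\otimes\id$ to the coproduct gives
\begin{equation*}
\sum \varphi_A\big(a_1 (x_1)_I\big)\,\varphi_H\big((x_2)_H\big)\; a_2\, (x_1)_{II}\, (x_2)_A\# x_3 ,
\end{equation*}
where $\varphi_H$ now sits on the first leg of $\rho(x_2)$ and $\varphi_A$ on the product $a_1(x_1)_I$. None of the right-integral properties of $\varphi_A,\varphi_H$ nor the counit axioms \eqref{b1}--\eqref{b6}, \eqref{c1}--\eqref{c4} lets you contract these: there is no axiom controlling $\varphi_H$ composed with the $H$-leg of $\rho$, and the partner leg of $\varphi_A$'s argument is $a_2(x_1)_{II}(x_2)_A$, not the second coproduct leg of $a_1(x_1)_I$. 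In fact the identity $\sum\varphi_H(x_H)\,x_A=\varphi_H(x)1_A$ that would be needed here is precisely what Lemma \ref{lema:intbismash} later \emph{deduces from} the right-integral property of $\Phi$, so assuming it would be circular. The paper avoids the issue entirely: it verifies only the left-integral identity (your computation), observes $\Phi(1\#1)=1$, and invokes the fact recalled in the preliminaries (from \cite{kn:Dascalescu}) that a one-sided integral taking the value $1$ at the unit is automatically a two-sided normal integral; cosemisimplicity, and with it the right-integral property, then follows from Theorem \ref{teo:semisimple}. Replacing your proposed ``symmetric'' computation by this citation closes the gap and recovers the paper's proof.
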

\dem
The following computation shows that $\Phi$ is a left integral:
\begin{align*}
\sum (a\# x)_1 \Phi\big((a\# x)_2\big)
&=
\sum a_1 (x_1)_I\# {(x_2)}_H \Phi\big( a_2 (x_1)_{II} (x_2)_A\# x_3\big)\\
&=
\sum a_1 (x_1)_I\# {(x_2)}_H \varphi_A\big( a_2 (x_1)_{II} (x_2)_A\big) \varphi_H( x_3) \\
&=
\sum \varphi_H(x) a_1 1_I\# 1_H \varphi_A\big( a_2 1_{II} 1_A\big)
=
\sum \varphi_H(x) a_1 \# 1_H \varphi_A(a_2) = \Phi(a\# x)1\# 1.
\end{align*}

As $\Phi(1\# 1)=1$ it follows that $A\# H$ is
cosemisimple and hence that $\Phi$ is also right integral.
\qed

\begin{defi}
An \emph{extension of $*$-Hopf algebras} is an extension
\(
(M):\ A\stackrel{\iota}{\to}M\stackrel{\pi}{\to}H
\)
where $A$, $M$ and $H$ are $*$-Hopf algebras and $\iota$ and $\pi$ are $*$-maps.
\end{defi}

The following theorem is inspired in \cite{kn:andrus2}, Proposition 3.2.9.

\begin{teo}
\label{teo:* en AH}
Let $(A,H,\trb,\rho,\chi,\psi)$ be a cocycle linked pair of bialgebras, where $\chi:H\otimes H\to A$ and $\psi:H\to A\otimes A$
are convolution invertible and $A$ and $H$ are $*$-Hopf algebras.
Let  $\gamma:H\to A$ be a linear map such that $\gamma(1)=1$ and $\varepsilon\gamma=\varepsilon$.
Then the formula below:
\begin{equation} \label{eq:estrella en bismash}
(a\# x)^*
=\sum \gamma\cc x_1^*\dd \cc x_2^*\trb a^*\dd\# x_3^*,\quad \forall  a\in A,\ x\in H
\end{equation}
defines a structure of $*$-Hopf algebra in $A^\psi\#_\chi H$ if and only if
\begin{align}
\sum \gamma\cc x_2^* \dd\cc x_3^*\trb \gamma(x_1)^*\dd
&=
\varepsilon(x^*)1, 			        \label{cero}\\
\sum \gamma\cc x_2^* \dd\cc x_3^*\trb(x_1\trb a)^*\dd
&=
a^*\gamma(x^*), 			        \label{uno}\\
\sum \gamma(y_1^*)\cc y_2^*\trb \gamma(x_1^*)\dd\chi(y_3^*,x_2^*)
&=
\sum \gamma(y_2^*x_2^*) \cc(y_3^*x_3^*)\trb\chi(x_1,y_1)^*\dd,		\label{dos} \\
\sum (x_2^*)_H\otimes\gamma(x_1^*)(x_2^*)_A
&=
\sum \left({(x_1)}_H\right)^*\otimes \gamma(x_2^*)\left(x_3^*\trb\left({(x_1)}_A\right)^*\right), \label{tres} \\
\sum \big(\gamma(x_1^*)\big)_1 (x_2^*)_I\otimes \big(\gamma(x_1^*)\big)_2 (x_2^*)_{II}
&=
\sum \gamma\cc\big((x_2)_H\big)^*\dd \Big(\big((x_3)_H\big)^*\trb \big((x_1)_I\big)^*\Big) \otimes \nonumber \\
&\ \ \ \ \  \ \ \
\gamma(x_4^*) \Big(x_5^*\trb \big((x_1)_{II}(x_2)_A(x_3)_A\big)^*\Big),
\label{cuatro}
\end{align}
for all $a\in A$, $x\in H$. Hence in this situation $A\to A^\psi\#_\chi H\to H$ is an extension of $*$-Hopf algebras.
\end{teo}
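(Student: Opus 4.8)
The plan is to proceed in the standard way for results of this type: translate the requirement that \eqref{eq:estrella en bismash} defines a $*$-Hopf algebra structure into explicit conditions, check each axiom separately, and see which constraints on $\gamma$ emerge. First I would observe that the formula for $*$ is \emph{forced} if we want $A$ and $H$ to embed compatibly: applied to $a\#1$ it must give a genuine $*$-structure on $A$, and applied to $1\#x$ it should produce something controlled by $\gamma$ (the role of $\gamma$ being that, unlike in the matched pair case, $1\#x$ need not be grouplike-like and $(1\#x)^*$ need not lie in the image of $\iota_H$). So I would take \eqref{eq:estrella en bismash} as the definition and verify: (i) it is conjugate-linear, which is immediate; (ii) it is an involution, i.e. $((a\#x)^*)^* = a\#x$; (iii) it is comultiplicative with respect to the cocycle bismash coproduct from Observation \ref{rem:action}(\ref{rem:action-2}); (iv) it is antimultiplicative with respect to the cocycle bismash product; and (v) it then follows automatically that $A^\psi\#_\chi H$ is a $*$-Hopf algebra, since the compatibility $(\ant*)^2=\id$ is automatic for a $*$-bialgebra that is a Hopf algebra.

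The key computational steps: for antimultiplicativity I would expand $\big((a\#x)(b\#y)\big)^*$ using the product formula $(a\#x)(b\#y)=\sum a(x_1\trb b)\chi(x_2,y_1)\# x_3y_2$ and the definition of $*$, and separately expand $(b\#y)^*(a\#x)^*$; matching the two, after using the module/comodule coalgebra axioms and the cocycle conditions \eqref{a4}, \eqref{a5}, \eqref{a6} and \eqref{c1}--\eqref{c8}, should collapse to the pair of identities \eqref{cero} (from the ``unit part'', i.e. taking $b=1$, $y=1$ or applying counits appropriately) and \eqref{uno} (the genuine action-twisting identity). For comultiplicativity I would expand $\Delta\big((a\#x)^*\big)$ and $(*\otimes *)\operatorname{sw}\Delta(a\#x)$ — more precisely $\big((a\#x)^*\big)\otimes$ versus the twisted tensor of the two tensor legs of $\Delta(a\#x)$, since $*$ is required to be \emph{comultiplicative} (i.e. $\Delta\circ* = (*\otimes *)\circ\Delta$, no flip, because the excerpt's definition of $*$-bialgebra says comultiplicative not anticomultiplicative) — using $\Delta(a\#x)=\sum a_1(x_1)_I\#(x_2)_H\otimes a_2(x_1)_{II}(x_2)_A\#x_3$. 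Isolating the $A$-free and $H$-free pieces of the resulting identity should yield \eqref{tres} (the $\rho$/comodule-coalgebra compatibility with $*$) and \eqref{cuatro} (the $\psi$-cocycle compatibility with $*$), while the piece measuring failure of $\chi$ to respect $*$ through $\gamma$ gives \eqref{dos}. Throughout, the identities \eqref{cero}--\eqref{cuatro} should be read as exactly the obstructions that appear, so the ``if and only if'' is obtained by showing each axiom is equivalent (given the others) to its corresponding displayed identity; for the ``only if'' direction one specializes the axioms by inserting units $1_A$, $1_H$ and applying $\varepsilon_A$, $\varepsilon_H$, $\iota_A$, $\iota_H$, $\pi_A$, $\pi_H$ to project out each component.

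The main obstacle I expect is bookkeeping, not conceptual: the cocycle bismash product and coproduct each carry two independent cocycles ($\chi$ on the algebra side, $\psi$ on the coalgebra side) plus the non-associative-looking action $\trb$, so expanding $\big((a\#x)(b\#y)\big)^*$ fully produces a long Sweedler expression in possibly seven or eight legs of $x$, and reconciling it with $(b\#y)^*(a\#x)^*$ requires careful, repeated use of \eqref{a4}, \eqref{c5}, \eqref{c6}, \eqref{c7} and especially the simplified form \eqref{eqn:19'}--\eqref{eqn:imp} of \eqref{c8}. The cleanest route is probably to \emph{not} verify antimultiplicativity and comultiplicativity independently from scratch but to note that $*$ is conjugate-linear plus an algebra/coalgebra compatibility, assemble the needed identities as the ``defect terms'', and then invoke that a conjugate-linear anti-algebra, co-algebra involution on a Hopf algebra is automatically a $*$-Hopf algebra structure (so $(\ant*)^2=\id$ needs no separate check). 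The final sentence of the theorem, that $A\to A^\psi\#_\chi H\to H$ is then an extension of $*$-Hopf algebras, is immediate once the $*$-structure exists: $\iota_A$ and $\pi_H$ were already shown to be Hopf maps in Observation \ref{rem:ext}, and \eqref{eq:estrella en bismash} restricted along $\iota_A$ is the given $*$ on $A$ while its composition with $\pi_H$ recovers the given $*$ on $H$ (using $\varepsilon\gamma=\varepsilon$ and $\gamma(1)=1$), so both maps are $*$-maps.
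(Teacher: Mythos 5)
Your overall strategy is exactly the paper's: take \eqref{eq:estrella en bismash} as the definition, verify involutivity, antimultiplicativity and comultiplicativity directly (with $(\ant\,*)^2=\id$ automatic), and obtain necessity by specializing to elements of the form $a\#1$ and $1\#x$ and applying counits to project out the components. The closing remark about the extension of $*$-Hopf algebras is also handled the same way. However, your bookkeeping of which condition corresponds to which axiom contains two concrete errors that would make the plan fail as written. First, you place \eqref{dos} inside the comultiplicativity analysis (``the piece measuring failure of $\chi$ to respect $*$ through $\gamma$ gives \eqref{dos}''), but the coproduct of $A^\psi\#_\chi H$ involves only $\psi$ and $\rho$, never $\chi$, so no such piece exists there; \eqref{dos} is the antimultiplicativity obstruction, arising from $\big((1\#x)(1\#y)\big)^*=(1\#y)^*(1\#x)^*$ via $(1\#x)(1\#y)=\sum\chi(x_1,y_1)\#x_2y_2$, and in the sufficiency direction it is used (together with \eqref{uno}, \eqref{a2}, \eqref{a4}) precisely in the long antimultiplicativity computation. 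Second, you claim \eqref{cero} drops out of the ``unit part'' of antimultiplicativity (taking $b=1$, $y=1$), but that specialization is vacuous; \eqref{cero} is the obstruction to $(a\#x)^{**}=a\#x$ (used together with \eqref{uno}), i.e.\ it belongs to the involutivity axiom, which you list as item (ii) but never tie to any of the five conditions. The correct assignment is: involutivity $\leftrightarrow$ \eqref{cero} (with \eqref{uno}); antimultiplicativity $\leftrightarrow$ \eqref{uno} and \eqref{dos}; comultiplicativity $\leftrightarrow$ \eqref{tres} and \eqref{cuatro} (extracted by applying $\varepsilon\otimes\id\otimes\id\otimes\varepsilon$ and $\id\otimes\varepsilon\otimes\id\otimes\varepsilon$ to $\Delta\big((1\#x)^*\big)=\big(\Delta(1\#x)\big)^{*\otimes*}$).

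A practical point that would also lighten the bookkeeping you worry about: once antimultiplicativity is established, comultiplicativity need not be checked on a general $a\#x$. Since $a\#x=(a\#1)(1\#x)$ and $\Delta$ is multiplicative, it suffices to verify $\Delta\big((a\#1)^*\big)=\big(\Delta(a\#1)\big)^{*\otimes*}$ (immediate) and $\Delta\big((1\#x)^*\big)=\big(\Delta(1\#x)\big)^{*\otimes*}$, the latter being the computation where \eqref{b2}, \eqref{a2}, \eqref{tres}, \eqref{c7} and \eqref{cuatro} enter. With the reassignment above and this reduction, your plan becomes the paper's proof.
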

\dem
First we assume that conditions \eqref{cero}--\eqref{cuatro} are verified.
The fact that the operator $*$ is an involution, i.e. that  $(a\# x)^{**}=a\# x$, follows easily from \eqref{cero} and \eqref{uno}.

Next we show that $*$ is antimultiplicative:
\begin{align*}
\big((a\# x)(b\# y)\big)^*
&=
\sum\big( a(x_1\trb b)\chi(x_2,y_1)\# x_3 y_2\big)^* \\
&=
\sum  \gamma(y_2^*x_3^*)\Big( y_3^*x_4^*\trb \big(\chi(x_2,y_1)^* (x_1\trb b)^* a^*\big)\Big)\# y_4^*x_5^* \\
&=
\sum \gamma(y_2^*x_3^*) \big( y_3^*x_4^*\trb \chi(x_2,y_1)^* \big) \big( y_4^*x_5^*\trb (x_1\trb b)^*\big)
\left( y_5^*x_6^*\trb a^*\right) \# y_6^*x_7^*
&& [\textrm{using \eqref{a2}}]
\\
&=
\sum  \gamma(y_1^*) \big( y_2^*\trb \gamma(x_2^*) \big) \chi(y_3^*,x_3^*) \big( y_4^*x_4^*\trb (x_1\trb b)^*\big)
\left( y_5^*x_5^*\trb a^*\right) \# y_6^*x_6^*
&& [\textrm{using \eqref{dos}}]
\\
&=
\sum  \gamma(y_1^*) \big( y_2^*\trb \gamma(x_2^*) \big) \Big( y_3^*\trb \big(x_3^*\trb (x_1\trb b)^*\big)\Big) \chi(y_4^*,x_4^*)
\left( y_5^*x_5^*\trb a^*\right) \# y_6^*x_6^*
&& [\textrm{using \eqref{a4}}]
\\
&=
\sum  \gamma(y_1^*) \big( y_2^*\trb \Big( \gamma(x_2^*) \big) \big(x_3^*\trb (x_1\trb b)^*\big)\Big) \chi(y_3^*,x_4^*)
\left( y_4^*x_5^*\trb a^*\right) \# y_5^*x_6^*
&& [\textrm{using \eqref{a2}}]
\\
&=
\sum  \gamma(y_1^*) \big( y_2^*\trb ( b^*\gamma(x_1^*))\big) \chi(y_3^*,x_2^*)
\left( y_4^*x_3^*\trb a^*\right) \# y_5^*x_4^*
&& [\textrm{using \eqref{uno}}]
\\
&=
\sum \gamma(y_1^*) \big(y_2^*\trb (b^*\gamma(x_1^*)\big) \big(y_3^*\trb(x_2^*\trb a^*)\big) \chi(y_4^*,x_3^*) \# y_5^*x_4^*
&& [\textrm{using \eqref{a4}}]
\\
&=
\sum \gamma(y_1^*) \Big(y_2^*\trb \big(b^*\gamma(x_1^*)(x_2^*\trb a^*)\big)\Big) \chi(y_3^*,x_3^*) \# y_4^*x_4^*
&& [\textrm{using \eqref{a2}}]
\\
&=
\sum  \big(\gamma(y_1^*) (y_2^*\trb b^*\# y_3^*)\big)\big(\gamma(x_1^*) (x_2^*\trb a^*\# x_3^*)\big)
\\
&=(b\# y)^{*}(a\# x)^{*}.
\end{align*}
In order to prove the comultiplicativity of $*$, it is enough to show that
\[
\Delta\big((a\# 1)^*\big)=\big(\Delta(a\# 1)\big)^{*\otimes *},\qquad
\Delta\big((1\# x)^*\big)=\big(\Delta(1\# x)\big)^{*\otimes *},\qquad \forall a\in A,\ x\in H.
\]
The first equality is obvious, for the second:
\begin{align*}
\big(\Delta(1\# x)\big)^{*\otimes *}
&=
\sum   \big((x_1)_I\# {(x_2)}_H\big)^* \otimes \big((x_1)_{II} (x_2)_A\# x_3\big)^*
\\
&=
\sum \gamma\Big(\big((x_2)_H\big)_1^*\Big) \Big(\big((x_2)_H\big)_2^*\trb \big((x_1)_I\big)^*\Big) \# \big((x_2)_H\big)_3^* \otimes
\\
& \ \ \ \ \ \ \ \ \
\gamma(x_3^*) \Big( x_4^*\trb \big( (x_1)_{II} (x_2)_A \big)^* \Big) \# x_5^*
\\
&=
\sum \gamma\Big(\big((x_2)_H\big)^*\Big) \Big(\big((x_3)_H\big)^*\trb \big((x_1)_I\big)^*\Big) \# \big((x_4)_H\big)^* \otimes
\\
& \ \ \ \ \ \ \ \ \
\gamma(x_5^*) \Big( x_6^*\trb \big( (x_1)_{II} (x_2)_A(x_3)_A (x_4)_A \big)^* \Big) \# x_7^*
&& [\textrm{using \eqref{b2}}]
\\
&=
\sum \gamma\Big(\big((x_2)_H\big)^*\Big) \Big(\big((x_3)_H\big)^*\trb \big((x_1)_I\big)^*\Big) \# \big((x_4)_H\big)^* \otimes
\\
& \ \ \ \ \ \ \ \ \
\gamma(x_5^*) \Big( x_6^*\trb  \big((x_4)_A\big)^*\Big) \Big( x_7^*\trb \big( (x_1)_{II} (x_2)_A(x_3)_A  \big)^* \Big) \# x_8^*
&& [\textrm{using \eqref{a2}}]
\\
&=
\sum \gamma\Big(\big((x_2)_H\big)^*\Big) \Big(\big((x_3)_H\big)^*\trb \big((x_1)_I\big)^*\Big) \# (x_5^*)_H \otimes
\\
& \ \ \ \ \ \ \ \ \
\gamma(x_4^*) \big((x_5)_A\big)^* \Big( x_6^*\trb \big( (x_1)_{II} (x_2)_A(x_3)_A  \big)^* \Big) \# x_7^*
&& [\textrm{using \eqref{tres}}]
\\
&=
\sum \gamma\Big(\big((x_2)_H\big)^*\Big) \Big(\big((x_3)_H\big)^*\trb \big((x_1)_I\big)^*\Big) \# (x_6^*)_H \otimes
\\
& \ \ \ \ \ \ \ \ \
\gamma(x_4^*) \Big( x_5^*\trb \big( (x_1)_{II} (x_2)_A(x_3)_A  \big)^* \Big) \big((x_6)_A\big)^*  \# x_7^*
&& [\textrm{using \eqref{c7}}]
\\
&=
\sum \big(\gamma(x_1^*)\big)_1 (x_2^*)_I \# (x_3^*)_H \otimes \big(\gamma(x_1^*)\big)_2 (x_2^*)_{II} \# x_4^*
= \Delta(1\# x^*).
&& [\textrm{using \eqref{cuatro}}]
\end{align*}
Hence we have proved that $A\# H$ is a $*$-Hopf algebra.

\bigbreak

Conversely, let us assume that the formula \eqref{eq:estrella en bismash} endows $A^\psi\#_\chi H$ with a
$*$-bialgebra structure. Then from the equalities
\[
\big((1\# x)(a\# 1)\big)^*=(a\# 1)^*(1\# x)^* \quad \text{and} \quad
\big((1\# x)(1\# y)\big)^*=(1\# y)^*(1\# x)^*,
\]
we obtain \eqref{uno} and \eqref{dos}, respectively. Moreover from $(a\# x)^{**}=a\# x$, we deduce \eqref{cero}.
Finally, if we apply $\varepsilon\otimes\id\otimes\id\otimes\varepsilon$ and  $\id\otimes\varepsilon\otimes\id\otimes\varepsilon$ to
both sides of $\Delta\big((1\# x)^*\big)=\big(\Delta(1\# x)\big)^{*\otimes *}$, we deduce \eqref{tres} and \eqref{cuatro}.
\qed
\begin{obse}
For future use we present below a slightly different and equivalent version of the conditions of last theorem. 
\begin{align}
\sum \gamma\cc x_2 \dd\cc x_3\trb \gamma(x_1^*)^*\dd
&=
\varepsilon(x)1, 			        \label{vcero}\\
\sum \gamma\cc x_2 \dd\cc x_3\trb(x_1^*\trb a^*)^*\dd
&=
a\gamma(x), 			        \label{vuno}\\
\sum \gamma(y_1)\cc y_2\trb \gamma(x_1)\dd\chi(y_3,x_2)
&=
\sum \gamma(y_2x_2) \cc(y_3x_3)\trb\chi(x_1^*,y_1^*)^*\dd,		\label{vdos} \\
\sum x_{2H}\otimes\gamma(x_1)x_{2A}
&=
\sum \left({(x_1^*)}_H\right)^*\otimes \gamma(x_2)\left(x_3\trb\left({(x_1^*)}_A\right)^*\right), \label{vtres} \\
\sum \big(\gamma(x_1)\big)_1 x_{2I}\otimes \big(\gamma(x_1)\big)_2 x_{2II}
&=
\sum \gamma\cc\big((x_2^*)_H\big)^*\dd \Big(\big((x_3^*)_H\big)^*\trb \big((x_1^*)_I\big)^*\Big) \otimes \nonumber \\
&\ \ \ \ \  \ \ \
\gamma(x_4) \Big(x_5\trb \big((x_1^*)_{II}(x_2^*)_A(x_3^*)_A\big)^*\Big),
\label{vcuatro}
\end{align}
\end{obse}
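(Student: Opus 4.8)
The plan is to obtain the system \eqref{vcero}--\eqref{vcuatro} from the system \eqref{cero}--\eqref{cuatro} of Theorem \ref{teo:* en AH} (and conversely) by the invertible change of variables $a\mapsto a^*$, $x\mapsto x^*$, $y\mapsto y^*$. Since $*\colon H\to H$ is a bijection --being an involution-- a family of identities valid ``for all $x\in H$'' is equivalent to the same family with $x$ replaced by $x^*$; hence it suffices to substitute $x^*$ (and $a^*$, $y^*$, wherever those variables occur) into \eqref{cero}--\eqref{cuatro}, simplify, and recognize the result as \eqref{vcero}--\eqref{vcuatro}. The converse implication is the same computation read backwards, again invoking $*\,*=\id$. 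Both directions use \emph{only} that $*$ is a comultiplicative involution; no positivity or compactness enters.

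The single coalgebra fact needed is comultiplicativity of $*$, which in Sweedler notation gives $\Delta(x^*)=\sum x_1^*\otimes x_2^*$ and, by coassociativity, $\sum (x^*)_1\otimes(x^*)_2\otimes(x^*)_3=\sum x_1^*\otimes x_2^*\otimes x_3^*$ --with the legs in the \emph{same} order, as $*$ is comultiplicative rather than anticomultiplicative. Consequently, after the substitution $x\mapsto x^*$ each starred Sweedler component $x_i^*$ appearing in \eqref{cero}--\eqref{cuatro} becomes $(x_i^*)^*=x_i$, while a $*$ placed \emph{outside} one of the maps $\gamma,\chi,\rho,\psi$ --none of which carries a prescribed compatibility with $*$-- survives: the $\gamma(x_1)^*$ of \eqref{cero} turns into the $\gamma(x_1^*)^*$ of \eqref{vcero}, the $\chi(x_1,y_1)^*$ of \eqref{dos} into the $\chi(x_1^*,y_1^*)^*$ of \eqref{vdos}, the $\bigl((x_1)_H\bigr)^*$ of \eqref{tres} into the $\bigl((x_1^*)_H\bigr)^*$ of \eqref{vtres}, and so on; finally $\varepsilon(x^*)1$ on the right of \eqref{cero} becomes $\varepsilon(x^{**})1=\varepsilon(x)1$, matching \eqref{vcero}.

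Running through the list: \eqref{cero} with $x\mapsto x^*$ gives $\sum\gamma(x_2)\bigl(x_3\trb\gamma(x_1^*)^*\bigr)=\varepsilon(x)1$, i.e.\ \eqref{vcero}; \eqref{uno} with $x\mapsto x^*$, $a\mapsto a^*$ gives \eqref{vuno}; \eqref{dos} with $x\mapsto x^*$, $y\mapsto y^*$ gives \eqref{vdos}; \eqref{tres} with $x\mapsto x^*$ gives \eqref{vtres}; and \eqref{cuatro} with $x\mapsto x^*$ gives \eqref{vcuatro}. As every step uses only bijectivity and comultiplicativity of $*$, the two systems are equivalent. The only delicate point --bookkeeping rather than a genuine obstacle-- is keeping track of the Sweedler indices through the higher iterated coproducts occurring in \eqref{dos}/\eqref{vdos} and \eqref{cuatro}/\eqref{vcuatro}, so that the surviving $*$'s and the legs of those coproducts get paired up correctly.
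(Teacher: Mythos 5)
Your substitution argument is correct and is exactly the intended justification: since $*$ is a bijective, comultiplicative involution on $A$ and $H$ (so $\Delta(x^*)=\sum x_1^*\otimes x_2^*$ with legs in the same order), replacing $x,y,a$ by $x^*,y^*,a^*$ in \eqref{cero}--\eqref{cuatro} turns each $x_i^*$ into $x_i$ while the stars outside $\gamma,\chi,\rho,\psi$ persist, yielding precisely \eqref{vcero}--\eqref{vcuatro}, and the argument reverses. The paper states the observation without proof, and your bookkeeping (including the surviving outer stars and the equality $\varepsilon(x^{**})=\varepsilon(x)$) is the same routine verification it leaves implicit.
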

\begin{obse}\label{rem:nico}
We consider two particular situations of special relevance. 
\begin{enumerate}
\item
In the hypothesis of the Theorem \ref{teo:* en AH}, if one takes the particular case that $\gamma(x)=\sum\chi^{-1}\cc x_2,\ant^{-1}(x_1)\dd$ --see \cite{kn:andrus2}--, then it can be proved that conditions:
\begin{align*}
(x\trb a)^* = \ant^{-1}(x^*)\trb a^* 
,\qquad
\chi(x,y)^* = \chi^{-1}\cc \ant^{-1}(x^*),\ant^{-1}(y^*)\dd		
\end{align*}
taken from \cite{kn:andrus2}, Proposition 3.2.9, imply \eqref{cero}, \eqref{uno} and \eqref{dos}.

\item If we consider  $\gamma(x)=\varepsilon(x)1$, then conditions \eqref{cero}--\eqref{cuatro} become:
\begin{align*}
\sum x_2^*\trb(x_1\trb a)^*
&=
a^*\varepsilon(x^*), 			        \\
\chi(x^*,y^*)
&=
\sum (x_2^*y_2^*)\trb\chi(y_1,x_1)^* ,		\\
\sum x^*_H\otimes x^*_A
&=
\sum \left(x_{1H}\right)^*\otimes \left(x_2^*\trb\left(x_{1A}\right)^*\right), \\
\sum x^*_I\otimes x^*_{II}
&=
\sum \big((x_{2H})^*\trb (x_{1I})^*\big) \otimes \big(x_3^*\trb (x_{1II}x_{2A})^*\big),
\end{align*}
for all $a\in A$, $x\in H$. These conditions, were the ones appearing in a preliminary version of this paper. 
\end{enumerate}
\end{obse}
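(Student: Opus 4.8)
The final statement is Observation \ref{rem:nico}, which records two special choices of the auxiliary map $\gamma$. I treat the two parts as independent specializations of the conditions \eqref{cero}--\eqref{cuatro} of Theorem \ref{teo:* en AH}. A tool I use throughout is the identity $\ant^{-1}={*}\,\ant\,{*}$, i.e. $\ant^{-1}(x)=\ant(x^*)^*$, valid in any $*$-Hopf algebra since $(\ant\,{*})^2=\id$; it lets me trade $\ant^{-1}$ for $\ant$ at the cost of two stars.

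For part (2) the plan is a direct substitution of $\gamma(x)=\varepsilon(x)1$ into \eqref{cero}--\eqref{cuatro}, followed by collapse. Condition \eqref{cero} vanishes immediately: by $x\trb1=\varepsilon(x)1$ (equation \eqref{a1}) and $\varepsilon(h^*)=\overline{\varepsilon(h)}$ its left side equals $\overline{\varepsilon(x)}1=\varepsilon(x^*)1$, so it is automatic and drops out, leaving four conditions. For these the key point is that every scalar counit factor produced by $\gamma$ sits next to starred Sweedler legs of the same element; since $*$ is comultiplicative, two consecutive starred legs $x_i^*,x_{i+1}^*$ are exactly the coproduct of $x_i^*$, so the ordinary counit axiom $\sum\varepsilon(w_1)w_2=w$ applied to $w=x_i^*$ collapses them with no conjugation mismatch. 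On the comodule side I apply $*$ to the counit axioms \eqref{b1} and \eqref{b3}, obtaining for instance $\sum\varepsilon((x_H)^*)(x_A)^*=\varepsilon(x^*)1$, to collapse the starred coaction legs (here one must remember that $(x_2)_A$ enters \eqref{cuatro} as $(x_2)_A^*$, because $*$ is antimultiplicative). With these collapses \eqref{uno}, \eqref{tres} and \eqref{cuatro} reduce verbatim to the first, third and fourth displayed equations, and \eqref{dos} reduces to the second after the relabelling $x\leftrightarrow y$.

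For part (1), with $\gamma(x)=\sum\chi^{-1}\cc x_2,\ant^{-1}(x_1)\dd$, the plan is to substitute this $\gamma$ into \eqref{cero}, \eqref{uno}, \eqref{dos} and to remove every star using the two hypotheses, which I abbreviate (A) $(x\trb a)^*=\ant^{-1}(x^*)\trb a^*$ and (B) $\chi(x,y)^*=\chi^{-1}\cc\ant^{-1}(x^*),\ant^{-1}(y^*)\dd$; from (B) one also derives the companion relation $\chi^{-1}(x,y)^*=\chi\cc\ant^{-1}(x^*),\ant^{-1}(y^*)\dd$ for the inverse (just as a $*$-cocycle has a $*$-cocycle inverse), which is what converts the starred $\chi^{-1}$'s hidden in $\gamma(x_1)^*$ and $\gamma(x_1^*)$. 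For \eqref{uno} I first rewrite $(x_1\trb a)^*$ as $\ant^{-1}(x_1^*)\trb a^*$ by (A), then merge the nested action $x_3^*\trb(\ant^{-1}(x_1^*)\trb a^*)$ via the twisted associativity \eqref{a4}, whose cocycle factor is cancelled by the $\chi^{-1}$ carried by $\gamma(x_2^*)$; the identity $\ant^{-1}={*}\,\ant\,{*}$ together with the normalisation \eqref{a6} then telescopes the left side to $a^*\gamma(x^*)$. Equation \eqref{cero} follows from the same manipulation in the degenerate case, using $\gamma(1)=1$, $x\trb1=\varepsilon(x)1$ and \eqref{a6}.

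The main obstacle is \eqref{dos}, where $x$ and $y$ are tied together by three cocycle factors: the $\chi^{-1}$ in $\gamma(y_1^*)$, the $\chi^{-1}$ in $\gamma(x_1^*)$ and the explicit $\chi(y_3^*,x_2^*)$ on the left, against $\gamma(y_2^*x_2^*)$ and $\chi(x_1,y_1)^*$ on the right. The strategy is to apply (B) to turn $\chi(x_1,y_1)^*$ into $\chi^{-1}\cc\ant^{-1}(x_1^*),\ant^{-1}(y_1^*)\dd$ so that, after $\ant^{-1}={*}\,\ant\,{*}$, both sides become pure convolution expressions in $\chi$ and $\chi^{-1}$ carrying a single action $\trb$; the equality then reduces to an instance of the $2$-cocycle condition \eqref{a5} for $\chi$ (and, by inversion, for $\chi^{-1}$) combined with \eqref{a4}. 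I expect essentially all the difficulty to be bookkeeping: keeping the nested $\ant^{-1}$'s and the many Sweedler indices aligned while performing this matching, since the conceptual input is carried entirely by (A), (B), the cocycle axioms \eqref{a4}--\eqref{a5}, and the relation $\ant^{-1}={*}\,\ant\,{*}$.
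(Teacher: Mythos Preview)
The paper states Observation \ref{rem:nico} without proof, so there is no argument to compare against line by line. Your proposal is correct. Part (2) is a clean and complete reduction: the key observation that \eqref{cero} becomes tautological and that the $\varepsilon$-factors collapse against adjacent starred Sweedler legs (via comultiplicativity of $*$ and, for the coaction legs in \eqref{cuatro}, via \eqref{b1} applied under the outer star) is exactly what is needed. For Part (1) your plan is sound: after the change of variables $z=x^*$, $w=y^*$ (which removes all stars thanks to (A), (B), the companion identity $\chi^{-1}(u,v)^*=\chi(\ant^{-1}(u^*),\ant^{-1}(v^*))$, and $\ant^{-1}={*}\ant{*}$), condition \eqref{uno} collapses via \eqref{a4} and the antipode identity $\sum z_3\ant^{-1}(z_2)=\varepsilon(z_{(2)})1$, and \eqref{cero}, \eqref{dos} reduce to purely Hopf-algebraic identities in $\chi,\chi^{-1},\trb,\ant^{-1}$ that follow from \eqref{a4}--\eqref{a6}. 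The paper carries out precisely this kind of computation in the Singer-pair setting later on (Observation \ref{obse:four} and Lemma \ref{lema:preparation}), where commutativity and cocommutativity make the bookkeeping lighter; your sketch is the general-case version of the same argument.
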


\begin{teo} \label{teo:simplificar}
Let $(A,H,\trb,\rho,\chi,\psi)$ be a cocycle linked pair of Hopf algebras, where $A$ and $H$ are CQG and $\chi$ and $\psi$ are invertible.
We consider the map $\gamma:H\to A$ defined by $\gamma(x)=\sum\chi^{-1}\cc x_2,\ant^{-1}(x_1)\dd$, for all $x\in H$.
Let $\varphi_A, \varphi_H, \Phi$ be normal
integrals given as in Lemma \ref{lema:cocycle-bismash-coss}.
Assume that $\Phi$ is central, \textit{i.e.} it satisfies $\Phi((a \# x)(b \#y)) = \Phi((b \# y)(a \#x))$.
If \eqref{cero}-\eqref{cuatro} are verified, then $A^\psi\#_\chi H$ with the $*$-structure considered in \eqref{eq:estrella en bismash} is a CQG.
\end{teo}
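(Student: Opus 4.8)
The plan is to mimic the strategy already used for matched pairs in Theorem~\ref{teo:bicrossproduct-CQG}: show that the natural normal integral $\Phi$ on $A^\psi\#_\chi H$ gives an inner product $\langle\,,\,\rangle_\Phi$ via $\langle u,v\rangle_\Phi=\Phi(v^*u)$, and then invoke Theorem~\ref{teo:andrus1}, for which we already know (Lemma~\ref{lema:cocycle-bismash-coss}) that $A^\psi\#_\chi H$ is cosemisimple with normal integral $\Phi$, and we know from Theorem~\ref{teo:* en AH} that $A^\psi\#_\chi H$ is a $*$-Hopf algebra. So the entire content is to prove positivity: $\Phi\big((a\#x)^*(a\#x)\big)>0$ for every nonzero $a\#x$, equivalently $\langle z,z\rangle_\Phi>0$ for all $z\neq 0$ in $A^\psi\#_\chi H$.

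First I would compute $\langle a\#x, b\#y\rangle_\Phi=\Phi\big((b\#y)^*(a\#x)\big)$ explicitly using formula~\eqref{eq:estrella en bismash} for the $*$-structure, the product of the cocycle bismash, and the definition $\Phi(a\#x)=\varphi_A(a)\varphi_H(x)$. The centrality hypothesis on $\Phi$ is exactly what lets one move factors around inside $\varphi_A$ and $\varphi_H$ (these are traces on $A$ and $H$ since $A,H$ are cosemisimple, recall Theorem~\ref{teo:semisimple}(4)), so that the nested actions $\trb$, the cocycle $\chi$ and the map $\gamma$ collapse. The goal of this computation is to show that $\langle\,,\,\rangle_\Phi$ on $A^\psi\#_\chi H\cong A\otimes H$ is, up to the identification $a\#x\leftrightarrow a\otimes x$, a \emph{positive} combination of the inner products $\langle\,,\,\rangle_A$ on $A$ and $\langle\,,\,\rangle_H$ on $H$; concretely I expect to obtain that $A\#1$ and $1\#H$ become orthogonal-type pieces and that the form restricted to each factor is (a positive multiple of) the known inner product there. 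Since $A$ and $H$ are CQG, $\langle\,,\,\rangle_A$ and $\langle\,,\,\rangle_H$ are inner products, and a tensor product (diagonal) of inner products is an inner product (Remark~\ref{rem:inv-prod-tens}), so positivity on $A^\psi\#_\chi H$ follows.

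The main obstacle is the bookkeeping: the $*$-structure~\eqref{eq:estrella en bismash} involves $\gamma$ and an action term $x_2^*\trb a^*$, and the product of the bismash involves both $\chi$ and another action term, so $(b\#y)^*(a\#x)$ is a genuinely complicated expression in $A\otimes H$. I would handle this by first reducing to the two ``pure'' cases $a\#1$ and $1\#x$ — showing $\langle a\#1,b\#1\rangle_\Phi=\langle a,b\rangle_A$ and $\langle 1\#x,1\#y\rangle_\Phi$ equals $\langle x,y\rangle_H$ up to the contribution of $\gamma$ and $\chi$, using relations \eqref{a1}--\eqref{a6}, \eqref{cero}--\eqref{dos} and the trace property of $\varphi_A,\varphi_H$ — and then checking that the mixed terms assemble multiplicatively, exactly as in \eqref{eq:rels-pi}. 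The centrality of $\Phi$ is the key hypothesis that makes the cocycle and $\gamma$-terms cancel: without it the form would be twisted by the Nakayama automorphisms of $A$ and $H$ (cf.\ Lemma~\ref{lema:integral} and \eqref{eq:naka}) and need not be positive. Once the explicit formula $\langle a\#x,b\#y\rangle_\Phi=\langle a,b\rangle_A\,\langle x,y\rangle_H$ (or a manifestly positive variant) is in hand, Theorem~\ref{teo:andrus1} immediately gives that $A^\psi\#_\chi H$ is a CQG, completing the proof.
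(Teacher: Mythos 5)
Your proposal is correct and follows essentially the same route as the paper's proof: one evaluates $\Phi\big((b\#y)^*(a\#x)\big)$ directly, using the centrality of $\Phi$ to commute the factor coming from the $*$-structure, the cocycle relation \eqref{a5} (through the specific choice of $\gamma$) and the integral property of $\varphi_H$ to collapse the $\chi$- and $\gamma$-terms, arriving at $\langle a\#x,b\#y\rangle_\Phi=\langle a,b\rangle_A\,\langle y^*,x^*\rangle_H$ — exactly the ``manifestly positive variant'' you anticipate — after which Theorem~\ref{teo:andrus1} gives the conclusion. The only cosmetic difference is that the paper computes the general pairing in one pass rather than reducing to the pure cases $a\#1$ and $1\#x$.
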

\dem
Let $\langle -,- \rangle_{A^\psi\#_\chi H}$, $\langle -,- \rangle_A$ and $\langle -,- \rangle_H$ the hermitians forms corresponding
respectively to $\Phi$, $\varphi_A$ and $\varphi_H$.

From condition \eqref{a5} it follows that --see also equation \eqref{eqn:final}--: 
\[
\sum \big(x_1\trb \gamma(y_1)\big)\chi(x_2,y_2)=\sum\chi^{-1}\!\!\cc xy_2,\ant^{-1}(y_1)\dd,\qquad \forall x,y\in H.
\]
Also it is easy to see that
\[
(b\# y)^*(a\# x)=\sum \big(\gamma(y_1^*)\# y_2^*\big)\big(b^*a\# x\big),\qquad \forall x,y\in H,\ a,b\in A.
\]
Then
\begin{align*}
\langle a\# x,b\# y\rangle_{A\# H}
&=
\Phi\big((b\# y)^*(a\# x)\big)
=
\sum \Phi\Big( \big(\gamma(y_1^*)\# y_2^*\big)\big(b^*a\# x\big)\Big)
=
\sum \Phi\Big(\big(b^*a\# x\big) \big(\gamma(y_1^*)\# y_2^*\big)\Big) \\
&=
\sum \varphi_A \Big( b^*a \big(x_1\trb \gamma(y_1^*)\big) \chi(x_2,y_2^*) \Big) \varphi_H(x_3y_3^*)
=
\sum \varphi_A \Big( b^*a\, \chi^{-1}\!\!\cc xy_2^*,\ant^{-1}(y_1^*)\dd \Big) \varphi_H(x_3y_3^*) \\
&=
\sum \varphi_A \Big( b^*a\, \chi^{-1}\!\!\cc x_1y_2^* \varphi_H(x_2y_3^*),\ant^{-1}(y_1^*)\dd \Big)
=
\sum \varphi_A \Big( b^*a\, \chi^{-1}\!\!\cc 1 ,\ant^{-1}(y_1^*)\dd \Big) \varphi_H(xy_2^*) \\
&=
\sum \varphi_A ( b^*a ) \varphi_H(xy^*)
=
\langle a,b\rangle_{A} \langle y^*,x^*\rangle_{H}.
\end{align*}
The proof of the positivity of the inner product in $A^\psi\#_\chi H$ follows immediately from the above equality.
\fin

\begin{obse}\label{rem:finit-conm}
In the case that $A$ and $H$ are finite, the fact that $\Phi$ is central follows immediately from Theorem \ref{teo:semisimple}.
\end{obse}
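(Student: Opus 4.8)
The plan is to deduce the centrality of $\Phi$ directly from the structure theory of finite semisimple Hopf algebras recalled in Theorem \ref{teo:semisimple}. The point is that when $A$ and $H$ are finite dimensional the Hopf algebra $A^\psi\#_\chi H$ is itself \emph{finite}: as a coalgebra it is just $A\otimes H$, so $\dim\big(A^\psi\#_\chi H\big)=\dim A\cdot\dim H<\infty$. By Lemma \ref{lema:cocycle-bismash-coss} the Hopf algebra $A^\psi\#_\chi H$ is cosemisimple and $\Phi$, defined by $\Phi(a\# x)=\varphi_A(a)\varphi_H(x)$, is a normal integral on it. Being both finite and cosemisimple, Theorem \ref{teo:semisimple} shows that $A^\psi\#_\chi H$ is semisimple.

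Next I would invoke the uniqueness of the normal integral recalled in the Preliminaries: the functional $\varphi\in\big(A^\psi\#_\chi H\big)^{\vee}$ provided by Theorem \ref{teo:semisimple} for the semisimple Hopf algebra $A^\psi\#_\chi H$ must then coincide with $\Phi$. Consequently the trace identity $\varphi(uv)=\varphi(vu)$ stated in the last part of Theorem \ref{teo:semisimple} holds verbatim for $\Phi$, that is, $\Phi(uv)=\Phi(vu)$ for all $u,v\in A^\psi\#_\chi H$. Specializing to $u=a\# x$ and $v=b\# y$ gives precisely $\Phi\big((a\# x)(b\# y)\big)=\Phi\big((b\# y)(a\# x)\big)$, which is the asserted centrality.

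There is no genuine obstacle here; the two points worth noticing are that finiteness of $A$ and $H$ is exactly what upgrades the cosemisimplicity of $A^\psi\#_\chi H$ coming from Lemma \ref{lema:cocycle-bismash-coss} to semisimplicity, so that the trace property of the integral becomes available, and that uniqueness of the normal integral is what lets us identify $\Phi$ with the functional $\varphi$ of Theorem \ref{teo:semisimple}. Combined with Theorem \ref{teo:simplificar}, this remark shows that in the finite case the hypothesis ``$\Phi$ is central'' is automatically satisfied, so that whenever the conditions \eqref{cero}--\eqref{cuatro} hold the $*$-Hopf algebra $A^\psi\#_\chi H$ equipped with the involution \eqref{eq:estrella en bismash} is automatically a CQG.
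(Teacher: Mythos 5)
Your argument is correct and is exactly the one the paper intends: $A^\psi\#_\chi H$ is finite and cosemisimple with normal integral $\Phi$ (Lemma \ref{lema:cocycle-bismash-coss}), hence semisimple by Theorem \ref{teo:semisimple}, and by uniqueness of the normal integral the trace identity $\varphi(xy)=\varphi(yx)$ of that theorem applies to $\Phi$, giving centrality. The paper leaves these details implicit, so your write-up simply spells out the same route.
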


\bigbreak

Next we study the particular case of this construction when $\chi$ and $\psi$ are trivial. 

\subsection{Linked pair of bialgebras}\label{subsection:linkedpairs}

In this section we consider the special case of a cocycle linked pair of bialgebras with trivial cocycle and cococycle,
in which case one can prove more precise results.

\begin{defi}
 A \emph{linked pair of bialgebras} is a cocycle linked pair of bialgebras with trivial cocycle and cococycle.
\end{defi}
\begin{obse}\label{obse:actioncoaction}
\begin{enumerate}
\item If we have a linked pair of bialgebras $(A,H,\trb,\rho)$,
where $A$ and $H$ are Hopf algebras, then conditions
\eqref{c1} and \eqref{c2}

are automatically verified (see \cite{kn:takeuchi}).

\item Clearly if $(A,H,\trb,\rho)$ is a linked pair of bialgebras, then  $(A,\trb)$ is a left
$H$-module algebra and $(H,\rho)$ is right $A$-comodule coalgebra. In this situation, the remaining compatibility relations
are \eqref{c1}, \eqref{c2}, \eqref{c5}, \eqref{c6} and \eqref{c7}.
\end{enumerate}
\end{obse}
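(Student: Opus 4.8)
My plan is to treat the statement as the conjunction of two formal facts and, in both cases, to substitute the trivial cocycle $\chi=\varepsilon\otimes\varepsilon$ (so $\chi(x,y)=\varepsilon(x)\varepsilon(y)1_A$) and the trivial cococycle $\psi$ (so $\psi(x)=\varepsilon(x)(1_A\otimes 1_A)$) into the axioms \eqref{a1}--\eqref{c8} of a cocycle linked pair and read off what survives.

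For part (2) I would go through the three blocks of axioms. In the first block \eqref{a6} holds trivially, \eqref{a5} becomes the tautology $\varepsilon(x)\varepsilon(y)\varepsilon(z)1=\varepsilon(x)\varepsilon(y)\varepsilon(z)1$, and \eqref{a4} collapses to $x\trb(y\trb a)=(xy)\trb a$; together with \eqref{a3} this says $\trb$ is a genuine left $H$-action on $A$, while \eqref{a1} and \eqref{a2} are precisely the module-algebra axioms, so $(A,\trb)$ is a left $H$-module algebra. Dually, in the second block \eqref{b5} and \eqref{b6} become trivial and \eqref{b4} collapses to coassociativity of $\rho$; with \eqref{b3} this makes $\rho$ a genuine right $A$-coaction, and \eqref{b1}, \eqref{b2} are the comodule-coalgebra axioms, so $(H,\rho)$ is a right $A$-comodule coalgebra. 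In the third block \eqref{c3} and \eqref{c4} are trivially true, and the only point needing a short computation is that the long relation \eqref{c8} degenerates: after substituting the trivial $\chi,\psi$ and using \eqref{a1} together with the counit identity \eqref{b1}, both sides of \eqref{c8} reduce to $\varepsilon(x)\varepsilon(y)(1\otimes 1)$, so \eqref{c8} is automatic. What remains is exactly \eqref{c1}, \eqref{c2}, \eqref{c5}, \eqref{c6}, \eqref{c7}.

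For part (1) I would show that when $A$ and $H$ are Hopf algebras the two ``unitality'' conditions \eqref{c1}, $\varepsilon(x\trb a)=\varepsilon(x)\varepsilon(a)$, and \eqref{c2}, $\rho(1)=1\otimes 1$, follow from the other axioms. This is Takeuchi's redundancy lemma, and I would simply invoke \cite{kn:takeuchi}; the shape of the proof is to apply a counit to a suitable structure relation --- e.g. $\mathrm{id}_A\otimes\varepsilon_H$ to \eqref{c7}, or $\varepsilon_A\otimes\mathrm{id}_H$ to \eqref{c6} --- and then use the antipodes of $A$ and $H$ to cancel the unwanted Sweedler components, which forces first $\rho(1)=1\otimes1$ and then $\varepsilon(x\trb a)=\varepsilon(x)\varepsilon(a)$.

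The hard part will be nothing conceptual: both difficulties are purely a matter of bookkeeping. In part (2) the only genuine work is the verification that \eqref{c8} collapses (it carries many Sweedler indices and several axioms enter at once), and in part (1) it is reproducing Takeuchi's counit-and-antipode cancellations without error; since the latter is already in the literature I would quote it rather than redo it.
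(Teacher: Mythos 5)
Your proposal is correct and follows essentially the same route as the paper: part (2) is exactly the routine specialization of the axioms \eqref{a1}--\eqref{c8} to trivial $\chi$ and $\psi$ that the paper treats as clear (including your observation that \eqref{a5}, \eqref{b5} and \eqref{c8} collapse with the help of \eqref{a1} and \eqref{b1}, leaving precisely \eqref{c1}, \eqref{c2}, \eqref{c5}, \eqref{c6}, \eqref{c7}), and part (1) is handled in the paper, as in your proposal, by citing Takeuchi's redundancy result rather than reproving it.
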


\begin{obse}\label{rem:bi-algcoalg}
\begin{enumerate}
\item \label{rem:bi-algcoalg-1}
If $(H,A,\tl,\tr)$ is a matched pair of bialgebras and $A$ is finite dimensional, we have that $(A^\vee,H,\trb,\rho)$ becomes a
linked pair of bialgebras with the following structures:
\[
(x\trb \alpha)(a)=\alpha(a\tl x),\qquad \rho(x)=\sum x_H\otimes x_{A^\vee}\ \Leftrightarrow\
\sum x_H\, x_{A^\vee}(a)=a\tr x
\]
for all $a\in A$, $x\in H$ and $\alpha\in A^\vee$.
\item \label{rem:bi-algcoalg-2}
In view of Observation \ref{rem:action}.\ref{rem:action-2}, a linked pair
of bialgebras gives rise to a bialgebra $A^\psi\#_\chi H$, for which $\psi$ and $\chi$ are trivial.
The bialgebra obtained in this particular case is called the \emph{bismash} product of $A$ and $H$ and is
denoted by $A \# H$.
(See \cite{kn:takeuchi}, Theorem 6.2.2 or \cite{kn:Majid}.)
We have the following explicit formulae:
\[
(a\# x)(b\# y) = \sum a(x_1\trb b)\# x_2 y,\,\,
\Delta(a\# x) = \sum a_1\# {(x_1)}_H \otimes a_2 {(x_1)}_A\# x_2, \quad \forall a,b\in A,\ x,y\in H.
\]
In the case that $A$ and $H$ are Hopf algebras, then $A\# H$ is a Hopf algebra with antipode
\begin{equation*}
\ant(a\# x)=\sum \big(1\# \ant(x_H)\big)\big(\ant(a x_A)\# 1\big),\quad \forall  a\in A,\ x\in H.
\end{equation*}
Moreover, if we consider the natural inclusion and projection maps defined in Observation \ref{rem:ext}
then, $\iota_H$ is an algebra map and $\pi_A$ is a coalgebra map.
Hence $A\# H$ is generated as an algebra by the subalgebras $A\# 1$ and $1\# H$.
\end{enumerate}
\end{obse}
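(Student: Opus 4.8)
This observation records two standard facts; in both cases the proof is a dualization followed by a routine verification of axioms, and the first assertion is where essentially all the work lies.

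\emph{First assertion.} The plan is to obtain the linked pair $(A^\vee,H,\trb,\rho)$ from the matched pair $(H,A,\tl,\tr)$ by partially dualizing its two structure maps in the finite-dimensional factor $A$. Concretely, I would take $\trb:H\otimes A^\vee\to A^\vee$ to be the transpose of the right $H$-action $\tl$ on $A$, so that $(x\trb\alpha)(a)=\alpha(a\tl x)$, and $\rho:H\to H\otimes A^\vee$ to be the partial transpose of the left $A$-action $\tr$ on $H$ — here one uses $\dim A<\infty$ to identify $\Hom(A,H)$ with $H\otimes A^\vee$ — so that $\rho$ is determined by $\sum x_H\,x_{A^\vee}(a)=a\tr x$. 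The verification then breaks into three packets. First, since $(A,\tl)$ is a right $H$-module coalgebra, its transpose $(A^\vee,\trb)$ is a left $H$-module algebra; unwinding, this is exactly \eqref{a1}--\eqref{a6} with trivial cocycle $\chi$. Dually, since $(H,\tr)$ is a left $A$-module coalgebra, $(H,\rho)$ is a right $A^\vee$-comodule coalgebra, which is exactly \eqref{b1}--\eqref{b6} with trivial cococycle $\psi$. Finally, the three matched-pair compatibilities \eqref{compat-bi-lr-1}, \eqref{compat-bi-lr-2}, \eqref{compat-bi-lr-3} transpose into the three surviving linked-pair compatibilities \eqref{c5}, \eqref{c6}, \eqref{c7}, while \eqref{c1} and \eqref{c2} are automatic for Hopf algebras by Observation \ref{obse:actioncoaction}. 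The one genuinely delicate point is the bookkeeping: keeping the variances (left/right, module/comodule) straight through the transpositions and matching the Sweedler legs correctly. I would carry out \eqref{compat-bi-lr-3}$\leftrightarrow$\eqref{c7} first, since that is the relation in which a transposition slip is easiest to miss.

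\emph{Second assertion.} Nothing new is needed here: one simply specializes the cocycle bismash product of Observation \ref{rem:action}.\ref{rem:action-2}, and the antipode of Proposition \ref{teo:ext-hopf}, to trivial $\chi$ and $\psi$. The product $\sum a(x_1\trb b)\chi(x_2,y_1)\#x_3y_2$ collapses to $\sum a(x_1\trb b)\#x_2y$, the coproduct collapses to $\sum a_1\#(x_1)_H\otimes a_2(x_1)_A\#x_2$, and the antipode collapses to $\sum\big(1\#\ant(x_H)\big)\big(\ant(ax_A)\#1\big)$; these coincide with the classical bismash product, so all that remains is a comparison of conventions with \cite{kn:takeuchi} and \cite{kn:Majid}. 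From the collapsed product, $\iota_H:x\mapsto 1\#x$ is an algebra map because the factor $x_1\trb b$ is absorbed when $b=1$ via \eqref{a1}; from the collapsed coproduct, $\pi_A:a\#x\mapsto a\varepsilon(x)$ is a coalgebra map because applying $\pi_A\otimes\pi_A$ and using \eqref{b1} in the form $\sum\varepsilon(x_H)x_A=\varepsilon(x)1$ returns $\Delta_A(a)\varepsilon(x)$; and the identity $a\#x=(a\#1)(1\#x)$ — a one-line check against the collapsed product, using $1\trb 1=1$ — shows that $A\#H$ is generated as an algebra by the subalgebras $A\#1\cong A$ and $1\#H\cong H$. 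There is no real obstacle in this part beyond the conventions check.
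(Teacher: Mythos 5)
Your proposal is correct and follows the same route the paper (implicitly, via its citations to Takeuchi and Majid and to Observation \ref{rem:action}.\ref{rem:action-2}) intends: dualize the matched-pair structure maps in the finite-dimensional factor $A$ to get the linked pair, with \eqref{compat-bi-lr-2}, \eqref{compat-bi-lr-1}, \eqref{compat-bi-lr-3} transposing to \eqref{c5}, \eqref{c6}, \eqref{c7}, and then specialize the cocycle bismash product and the antipode of Proposition \ref{teo:ext-hopf} to trivial $\chi$ and $\psi$. One small touch-up: since the statement is about bialgebras, you should not invoke Observation \ref{obse:actioncoaction} (which assumes Hopf algebras) for \eqref{c1} and \eqref{c2}; instead note that they follow directly from the matched-pair unit conditions $1\tl x=\varepsilon(x)1$ and $a\tr 1=\varepsilon(a)1$, which give $\varepsilon(x\trb\alpha)=\varepsilon(x)\varepsilon(\alpha)$ and $\rho(1)=1\otimes\varepsilon$.
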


\begin{lema} \label{lema:intbismash}
Let $(A,H,\trb,\rho)$ be a linked pair of bialgebras.
If $A$ and $H$ are cosemisimple bialgebras with normal integrals $\varphi_A$ and $\varphi_H$ respectively,
then $A\# H$ is cosemisimple and $\Phi : A\# H \rightarrow \C$ defined by $\Phi(a\# x)=\varphi_A(a)\varphi_H(x)$ is a
normal integral. Moreover, in this situation $\varphi_H:H\to\C$ is a morphism of $A$-comodules.
\end{lema}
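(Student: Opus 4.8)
The plan is to obtain the cosemisimplicity of $A\#H$ and the normality of $\Phi$ as the special case of Lemma \ref{lema:cocycle-bismash-coss} in which the cocycle $\chi$ and the cococycle $\psi$ are trivial: a linked pair is, by definition, precisely such a cocycle linked pair, and then $A^\psi\#_\chi H$ is the bismash product $A\#H$. (If a self-contained argument is preferred, one simply re-runs the short computation in the proof of that lemma using the simpler coproduct $\Delta(a\#x)=\sum a_1\#(x_1)_H\otimes a_2(x_1)_A\#x_2$ of Observation \ref{rem:bi-algcoalg}.) Hence the only genuinely new assertion is that $\varphi_H$ is a morphism of right $A$-comodules, where $\C$ carries the trivial $A$-comodule structure; explicitly, that $\sum\varphi_H(x_H)x_A=\varphi_H(x)1_A$ for every $x\in H$.

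To prove this last identity I would use that $\Phi$, being a normal integral, is in particular a \emph{right} integral on $A\#H$, so $\sum\Phi(w_1)w_2=\Phi(w)(1\#1)$ for all $w\in A\#H$. Applying this to $w=1\#x$ and using $\Delta(1\#x)=\sum 1\#(x_1)_H\otimes(x_1)_A\#x_2$ together with $\varphi_A(1)=1$, the left-hand side becomes $\sum\varphi_H\big((x_1)_H\big)\,(x_1)_A\#x_2$, so that
\[
\sum\varphi_H\big((x_1)_H\big)\,(x_1)_A\otimes x_2=\varphi_H(x)\,1_A\otimes 1_H \quad\text{in }A\otimes H.
\]
Now apply $\id_A\otimes\varepsilon_H$: since $\rho$ is linear and $\sum x_1\varepsilon(x_2)=x$, the left-hand side equals $(\varphi_H\otimes\id_A)\big(\sum\rho(x_1)\varepsilon(x_2)\big)=(\varphi_H\otimes\id_A)\rho(x)=\sum\varphi_H(x_H)x_A$, while the right-hand side equals $\varphi_H(x)1_A$. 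This is exactly the claimed comodule-map property.

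There is no real obstacle here: the substance is carried by Lemma \ref{lema:cocycle-bismash-coss}, and the comodule-map statement is squeezed out of the integral identity by a single application of the counit. The one point deserving care is to invoke the right integral property rather than the left one: running the analogous computation with $\sum w_1\Phi(w_2)=\Phi(w)(1\#1)$ leaves $\varphi_A$ evaluated on the $A$-component $(x_1)_A$, which does not collapse, whereas the right-integral version isolates the $H$-component, on which $\varphi_H$ can be used directly.
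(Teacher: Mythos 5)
Your proposal is correct and matches the paper's own argument: the first assertion is obtained as the trivial-cocycle special case of Lemma \ref{lema:cocycle-bismash-coss}, and the comodule-map property of $\varphi_H$ is extracted by applying the right integral condition of $\Phi$ to $1\# x$ and then $\id_A\otimes\varepsilon_H$, exactly as in the paper. Your closing remark on why the right (rather than left) integral identity is the one that works is a sensible addition but not a deviation.
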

\dem
The first assertion follows from Lemma \ref{lema:cocycle-bismash-coss}.
The second assertion is proved by writing the right integral condition of $\Phi$ to $1\# x$ and then applying
$id_A \otimes \varepsilon_H$ to obtain:
\[
\sum\varphi_H\cc x_H\dd x_A=\varphi_H(x)1_A,\quad \forall x\in H.
\]
\qed

In connection with Observation \ref{rem:nico} we have the following.

\begin{lema} \label{lema:simpplificar}
Let $(A,H,\trb,\rho)$ be a linked pair of bialgebras, where $A$ and $H$ are $*$-Hopf algebras.
We consider $\gamma:H\to A$ defined by $\gamma(x)=\varepsilon(x)1$, for all $x\in H$.
Then condition \eqref{uno} is equivalent to
\begin{align} \label{compat-trb-*}
(x\trb a)^* &= \ant^{-1}(x^*)\trb a^*. 		
\end{align}
\end{lema}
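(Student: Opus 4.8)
The plan is to reduce the assertion to a purely multiplicative identity and then apply the antipode axioms twice. With $\gamma(x)=\varepsilon(x)1$, condition \eqref{uno} reads $\sum\gamma(x_2^*)\bigl(x_3^*\trb(x_1\trb a)^*\bigr)=a^*\gamma(x^*)$. Since $*$ is comultiplicative, $\sum x_1^*\otimes x_2^*\otimes x_3^*=(\Delta\otimes\id)\Delta(x^*)$, so applying $\id\otimes\varepsilon\otimes\id$ collapses the middle leg and the left-hand side becomes $\sum x_2^*\trb(x_1\trb a)^*$, while the right-hand side is $\varepsilon(x^*)a^*$; this is exactly the reduced form already recorded in Observation \ref{rem:nico}(2). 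Hence it suffices to prove that the identity $(\star)\colon\ \sum x_2^*\trb(x_1\trb a)^*=\varepsilon(x^*)a^*$ (for all $a\in A$, $x\in H$) is equivalent to \eqref{compat-trb-*}. Two standing facts will be used repeatedly: for a linked pair, $(A,\trb)$ is a genuine left $H$-module algebra (Observation \ref{obse:actioncoaction}); and, since $x\mapsto x\trb a$ is linear while $*$ is conjugate-linear, a collapse of a Sweedler sum such as $\sum\varepsilon(x_2^*)(\cdots)$ is legitimate as long as one reads it as the application of an honest (bi)linear map to the tensor $\sum\varepsilon(x_2^*)\,x_1^*\otimes x_3^*=\sum x_1^*\otimes x_2^*$, equivalently by invoking the counit and antipode identities $\sum\varepsilon(w_1)w_2=w=\sum w_1\varepsilon(w_2)$ and $\sum w_2\,\ant^{-1}(w_1)=\varepsilon(w)1=\sum\ant^{-1}(w_2)\,w_1$ for the element $w=x^*$ (whose Sweedler components are the $x_i^*$).

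For \eqref{compat-trb-*}$\Rightarrow(\star)$: substitute $(x_1\trb a)^*=\ant^{-1}(x_1^*)\trb a^*$ into the left side of $(\star)$, rewrite $x_2^*\trb\bigl(\ant^{-1}(x_1^*)\trb a^*\bigr)=\bigl(x_2^*\,\ant^{-1}(x_1^*)\bigr)\trb a^*$ by the module axiom, collapse $\sum x_2^*\,\ant^{-1}(x_1^*)=\varepsilon(x^*)1$ by the antipode identity, and conclude with $1\trb a^*=a^*$.

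For $(\star)\Rightarrow$\eqref{compat-trb-*}: the key device is the element $T:=\sum\ant^{-1}(x_3^*)\trb\bigl(x_2^*\trb(x_1\trb a)^*\bigr)$, which I would evaluate in two ways. First, regrouping the first two legs of the threefold coproduct (write $\Delta(x)=\sum x'\otimes x''$ and $\Delta(x')=\sum x'_1\otimes x'_2$), the inner sum $\sum(x'_2)^*\trb(x'_1\trb a)^*$ is an instance of $(\star)$ for $x'$, hence equals $\varepsilon((x')^*)a^*$; therefore $T=\sum\varepsilon((x')^*)\bigl(\ant^{-1}((x'')^*)\trb a^*\bigr)=\ant^{-1}(x^*)\trb a^*$, using $\sum\varepsilon((x')^*)(x'')^*=x^*$ and linearity of $w\mapsto\ant^{-1}(w)\trb a^*$. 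Second, applying the module axiom throughout gives $T=\sum\bigl(\ant^{-1}(x_3^*)\,x_2^*\bigr)\trb(x_1\trb a)^*$; regrouping the last two legs and using $\sum\ant^{-1}(w_2)w_1=\varepsilon(w)1$ and then the counit axiom yields $\sum\ant^{-1}(x_3^*)\,x_2^*\otimes x_1^*=1\otimes x^*$, whence $T=1\trb(x\trb a)^*=(x\trb a)^*$. Comparing the two evaluations gives $(x\trb a)^*=\ant^{-1}(x^*)\trb a^*$, i.e.\ \eqref{compat-trb-*}.

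There is no conceptual obstacle here — each step is a one-line application of a module or antipode axiom — so the hard part is purely bookkeeping: one must arrange the Sweedler indices so that, before $(\star)$ is invoked, the acting factor carries a \emph{later} comultiplication index than the argument sitting inside $(\,\cdot\,\trb a)^*$ (this is what dictates placing $\ant^{-1}$ on the third leg of $T$), and one must keep track honestly of the conjugate-linearity introduced by $*$, carrying out every collapse of a sum of the form $\sum\varepsilon(x_i^*)(\cdots)$ as an identity for the element $x^*$ rather than for $x$.
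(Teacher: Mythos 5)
Your argument is correct and is essentially the paper's own proof: the forward direction is the same substitution-plus-antipode collapse, and your two-way evaluation of $T=\sum\ant^{-1}(x_3^*)\trb\bigl(x_2^*\trb(x_1\trb a)^*\bigr)$ is precisely the paper's chain for the converse, $(x\trb a)^*=\sum\varepsilon(x_2^*)(x_1\trb a)^*=\sum\ant^{-1}(x_3^*)\trb\bigl(x_2^*\trb(x_1\trb a)^*\bigr)=\sum\ant^{-1}(x_2^*)\trb a^*\,\varepsilon(x_1^*)=\ant^{-1}(x^*)\trb a^*$, merely reorganized. There are no gaps; your explicit care with conjugate-linearity and with reading the Sweedler collapses as identities for the element $x^*$ only spells out what the paper leaves implicit.
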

\dem
If we assume \eqref{compat-trb-*}, then
\begin{align*}
\sum x_2^*\trb(x_1\trb a)^*
&=
\sum x_2^*\trb \left(\ant^{-1}(x_1^*)\trb a^*\right)
=
a^*\varepsilon(x^*).
\end{align*}

If we assume \eqref{uno}, then
\begin{align*}
(x\trb a)^*
&=
\sum \varepsilon(x_2^*)(x_1\trb a)^*
=
\sum \ant^{-1}(x_3^*)\trb \big( x_2^* \trb(x_1\trb a)^*\big)
=
\sum \ant^{-1}(x_2^*)\trb a^*\varepsilon(x_1^*)
=
\ant^{-1}(x^*)\trb a^*.
\end{align*}
\fin

\bigbreak

From Theorems \ref{teo:* en AH} and \ref{teo:simplificar}, Observation \ref{rem:finit-conm} and Lema \ref{lema:simpplificar} we get:

\begin{coro}
Let $(A,H,\trb,\rho)$ be a linked pair of bialgebras, where $A$ and $H$ are $*$-Hopf algebras.
Then the formula below:
\begin{equation*}
(a\# x)^*
=\sum x_1^*\trb a^*\# x_2^*,\quad \forall  a\in A,\ x\in H
\end{equation*}
defines a structure of $*$-Hopf algebra in $A\# H$ if and only if
\[
(x\trb a)^* = \ant^{-1}(x^*)\trb a^* \qquad \textrm{and} \qquad 			
\rho(x^*) = \sum \left(x_{1H}\right)^*\otimes \left(x_2^*\trb\left(x_{1A}\right)^*\right),
\]
for all $a\in A$, $x\in H$. In this situation, if $A$ and $H$ are finite CQG, then so is $A\# H$.
\fin
\end{coro}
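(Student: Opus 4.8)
The plan is to derive this corollary as the specialization of Theorem~\ref{teo:* en AH} and Theorem~\ref{teo:simplificar} to a \emph{linked} pair, i.e.\ a cocycle linked pair in which $\chi$ and $\psi$ are trivial, taking as auxiliary map $\gamma:H\to A$ the counit $\gamma(x)=\varepsilon(x)1$. First I would note that this $\gamma$ satisfies the hypotheses of Theorem~\ref{teo:* en AH} ($\gamma(1)=1$ and $\varepsilon\gamma=\varepsilon$) and that, contracting $\gamma(x_1^*)=\varepsilon(x_1^*)1$ in \eqref{eq:estrella en bismash}, the $*$-formula there becomes exactly $(a\#x)^*=\sum (x_1^*\trb a^*)\#x_2^*$, the one in the statement.

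Next I would cut down the list \eqref{cero}--\eqref{cuatro}. For $\gamma=\varepsilon(\cdot)1$ these are the four simplified equations of Observation~\ref{rem:nico}(2); since here also $\chi$ and $\psi$ are trivial, I would check that three of them drop out. Equation \eqref{cero} and the $\chi$-equation (reduction of \eqref{dos}) collapse using the counit identity $z\trb 1=\varepsilon(z)1$ from \eqref{a1} together with $\varepsilon *=\overline\varepsilon$; and the $\psi$-equation (reduction of \eqref{cuatro}) collapses once one inserts the reduction of \eqref{tres} and the integral identity $\sum\varepsilon(x_H)x_A=\varepsilon(x)1$ coming from \eqref{b1}. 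What survives is the reduction of \eqref{uno}, namely $\sum x_2^*\trb(x_1\trb a)^*=a^*\varepsilon(x^*)$, which by Lemma~\ref{lema:simpplificar} is equivalent to $(x\trb a)^*=\ant^{-1}(x^*)\trb a^*$, and the reduction of \eqref{tres}, which is precisely $\rho(x^*)=\sum(x_{1H})^*\otimes(x_2^*\trb(x_{1A})^*)$. Running Theorem~\ref{teo:* en AH} in both directions then yields the stated ``if and only if''.

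For the compactness claim, suppose $A$ and $H$ are finite CQG. By Lemma~\ref{lema:intbismash} (a particular case of Lemma~\ref{lema:cocycle-bismash-coss}) $A\#H$ is cosemisimple with normal integral $\Phi(a\#x)=\varphi_A(a)\varphi_H(x)$; being finite it is semisimple, so Theorem~\ref{teo:semisimple} gives $\Phi(uv)=\Phi(vu)$, that is, $\Phi$ is central --- this is Observation~\ref{rem:finit-conm}. Because $\chi$ is trivial, the map $\gamma(x)=\sum\chi^{-1}(x_2,\ant^{-1}(x_1))$ used in Theorem~\ref{teo:simplificar} coincides with $\varepsilon(x)1$, so that theorem applies and delivers that $A\#H$, equipped with the above $*$-structure, is a CQG. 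I expect no genuine obstacle, since everything is a specialization of results already established; the only points needing a moment of care are verifying that triviality of the cocycle and cococycle annihilates \eqref{cero}, \eqref{dos} and \eqref{cuatro}, and observing that the centrality hypothesis of Theorem~\ref{teo:simplificar} is automatic here by semisimplicity.
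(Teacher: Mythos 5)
Your proposal is correct and follows essentially the same route as the paper, which obtains this corollary precisely by specializing Theorem~\ref{teo:* en AH} and Theorem~\ref{teo:simplificar} to trivial $\chi$ and $\psi$ with $\gamma=\varepsilon(\cdot)1$, invoking Lemma~\ref{lema:simpplificar} to rewrite condition \eqref{uno} and Observation~\ref{rem:finit-conm} for the centrality of $\Phi$ in the finite case. The only cosmetic remark is that the collapse of the reduction of \eqref{cuatro} already follows from \eqref{b1} and \eqref{a1} alone, without needing \eqref{tres}, but since \eqref{tres} is among the assumed conditions this makes no difference.
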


In order to apply theorem \ref{teo:simplificar} without assuming that $A$ and $H$ are finite, we have the following.

\begin{prop}\label{teo:linkedcompacto} 
Let $(A,H,\trb,\rho)$ be a linked pair of bialgebras.
Let $\varphi_A$, $\varphi_H$, $\Phi$, be as in Lemma \ref{lema:intbismash}.
Then $\Phi$ is central if and only if $\varphi_A$ and $\varphi_H$ are central and $\varphi_A: A \rightarrow \C$ is a morphism of $H$--modules.

\end{prop}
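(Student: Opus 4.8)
The plan is to test the centrality of $\Phi$ against the algebra generators of $A\#H$ and then unravel what is obtained. First I would record the purely formal observation that the set $Z=\{\,u\in A\#H:\ \Phi(uv)=\Phi(vu)\ \text{for all }v\in A\#H\,\}$ is a unital subalgebra of $A\#H$: membership in $Z$ is a linear condition on $u$, and if $u,u'\in Z$ then $\Phi(uu'v)=\Phi(u'vu)=\Phi(vuu')$ for every $v$. By Observation \ref{rem:bi-algcoalg}, $A\#H$ is generated as an algebra by $A\#1$ and $1\#H$; hence $\Phi$ is central if and only if $A\#1\subseteq Z$ and $1\#H\subseteq Z$, and since $Z$-membership is also linear in $v$ it is enough to test these inclusions on elementary tensors $v=b\#y$.

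Then I would carry out the two resulting computations, using the explicit product of the bismash product (Observation \ref{rem:bi-algcoalg}) and the fact that $\varphi_H$ is a left integral, so that $\sum g(y_1)\varphi_H(y_2)=g(1)\varphi_H(y)$ for every $g\in H^{\vee}$. From $(a\#1)(b\#y)=ab\#y$ and $(b\#y)(a\#1)=\sum b(y_1\trb a)\#y_2$ one gets $\Phi((a\#1)(b\#y))=\varphi_A(ab)\varphi_H(y)$ and $\Phi((b\#y)(a\#1))=\varphi_A(ba)\varphi_H(y)$; as $\varphi_H(1)=1$, this shows that $A\#1\subseteq Z$ if and only if $\varphi_A$ is central. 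From $(1\#x)(b\#y)=\sum(x_1\trb b)\#x_2y$ and $(b\#y)(1\#x)=b\#yx$ one gets that $1\#H\subseteq Z$ is equivalent to the single identity
\begin{equation}\label{eqn:dagger-prop}
\sum\varphi_A(x_1\trb b)\,\varphi_H(x_2y)=\varphi_A(b)\,\varphi_H(yx),\qquad\text{for all }b\in A,\ x,y\in H.
\end{equation}

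It then remains to see that \eqref{eqn:dagger-prop} is equivalent to the conjunction of the statements that $\varphi_H$ is central and that $\varphi_A$ is a morphism of $H$-modules, the latter meaning $\varphi_A(x\trb b)=\varepsilon(x)\varphi_A(b)$ for all $x,b$ (with $\C$ the trivial $H$-module). One direction is immediate: if $\varphi_A(x\trb b)=\varepsilon(x)\varphi_A(b)$, the left-hand side of \eqref{eqn:dagger-prop} collapses to $\varphi_A(b)\varphi_H(xy)$, which is the right-hand side once $\varphi_H$ is central. Conversely, putting $b=1$ in \eqref{eqn:dagger-prop} gives $\varphi_H(xy)=\varphi_H(yx)$; feeding this back, \eqref{eqn:dagger-prop} reads $\varphi_H(wy)=\varphi_H((\varphi_A(b)x)\,y)$ for all $y\in H$, where $w=\sum\varphi_A(x_1\trb b)x_2$, and since $H$ is cosemisimple the bilinear form $(u,v)\mapsto\varphi_H(uv)$ on $H$ is non-degenerate, hence $w=\varphi_A(b)x$; applying $\varepsilon$ then yields $\varphi_A(x\trb b)=\varepsilon(x)\varphi_A(b)$. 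Assembling the equivalences, $\Phi$ is central if and only if $\varphi_A$ is central and \eqref{eqn:dagger-prop} holds, which happens if and only if $\varphi_A$ and $\varphi_H$ are central and $\varphi_A$ is a morphism of $H$-modules. The only step that is not routine bookkeeping with the integral axioms and the product of $A\#H$ is the cancellation of $\varphi_H$ in \eqref{eqn:dagger-prop}, i.e. the non-degeneracy of the pairing $\varphi_H(\cdot\,\cdot)$ for a cosemisimple Hopf algebra (alternatively one could route that step through the $A$-comodule-morphism property of $\varphi_H$ from Lemma \ref{lema:intbismash}); I expect this to be the only genuinely delicate point.
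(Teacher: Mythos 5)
Your argument is correct, and it hinges on the same two points as the paper's proof: the specialisation of a centrality identity at particular elements, and the cancellation of $\varphi_H$ via the coFrobenius (non-degeneracy) property of a cosemisimple Hopf algebra, which the paper invokes in exactly the same way to pass from $\varphi_A(a)\varphi_H(yx)=\sum\varphi_A(y_1\trb a)\varphi_H(y_2x)$ for all $x$ to the $H$-module-morphism property of $\varphi_A$. Where you genuinely differ is in the organisation of the sufficiency direction: the paper establishes the full identity \eqref{eq:central} directly, by a computation that uses centrality of $\varphi_A$, the integral property of $\varphi_H$, the fact that $\trb$ is an action, the module-algebra axiom \eqref{a2} and the module-morphism property of $\varphi_A$; you instead observe that the set $Z$ of elements $u$ with $\Phi(uv)=\Phi(vu)$ for all $v$ is a unital subalgebra, and that $A\# H$ is generated by $A\#1$ and $1\# H$ (Observation \ref{rem:bi-algcoalg}), so centrality of $\Phi$ need only be tested on generators. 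This buys a lighter sufficiency argument (the two generator computations need only the product formula, \eqref{a1}, \eqref{a3} and the integral property) and it cleanly separates the two halves of the conclusion: $A\#1\subseteq Z$ is precisely centrality of $\varphi_A$, while $1\# H\subseteq Z$ is precisely the identity $\sum\varphi_A(x_1\trb b)\varphi_H(x_2y)=\varphi_A(b)\varphi_H(yx)$, which you then analyse exactly as the paper analyses \eqref{eq:central}. One small caveat: your parenthetical suggestion that the cancellation step could instead be routed through the $A$-comodule-morphism property of $\varphi_H$ from Lemma \ref{lema:intbismash} is not substantiated and is not needed; the coFrobenius justification you (and the paper) give is the right one.
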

\dem
We have $\Phi\big((a\# x)(b\# y)\big)=\sum\varphi_A\big(a(x_1\trb b)\big)\varphi_H(x_2y)$, then $\Phi$ is central if and only if
\begin{equation} \label{eq:central}
\sum\varphi_A\big(a(x_1\trb b)\big)\varphi_H(x_2y)
=
\sum\varphi_A\big(b(y_1\trb a)\big)\varphi_H(y_2x)
,\quad \forall  a,b\in A,\ x,y\in H.
\end{equation}
Assume  that $\varphi_A$ and $\varphi_H$ are central and $\varphi_A$ is a morphism of $H$--modules.
The last condition on $\varphi_A$ means that $\varphi_A(x\trb a) = \varepsilon(x)\varphi_A(a)$ for all $x \in H,\, a \in A$.
Then
\begin{align*}
\sum\varphi_A\big(a(x_1\trb b)\big)\varphi_H(x_2y)
&=
\sum\varphi_A\big((x_1\trb b)a\big)\varphi_H(x_2y)
=
\sum\varphi_A\Big((x_1\trb b)\big((x_2y_1)\trb a\big)\Big)\varphi_H(x_3y_2) \\
&=
\sum\varphi_A\Big((x_1\trb b)\big(x_2\trb (y_1\trb a)\big)\Big)\varphi_H(x_3y_2) \\
&=
\sum\varphi_A\Big(x_1\trb \big(b (y_1\trb a)\big)\Big)\varphi_H(x_2y_2)
=
\sum\varphi_A\big(b (y_1\trb a)\big)\varphi_H(xy_2) \\
&=
\sum\varphi_A\big(b (y_1\trb a)\big)\varphi_H(y_2x).
\end{align*}
Assume now that $\Phi$ is central. If we put $a=b=1$ in \eqref{eq:central} we get that $\varphi_H$ is central and if we put $x=y=1$ we get that $\varphi_A$ is central.

Using that $\varphi_H$ is central, if we put $b=1$ in \eqref{eq:central} we get
\begin{equation*}
\varphi_A(a)\varphi_H(yx)
=
\sum\varphi_A(y_1\trb a)\varphi_H(y_2x)
,\quad \forall  a\in A,\ x,y\in H.
\end{equation*}
Now, as $H$ is coFrobenius the validity of the above equation for all $x \in H$ implies that
\begin{equation*}
\varphi_A(a)y
=
\sum\varphi_A(y_1\trb a)y_2
,\quad \forall  a\in A,\ y\in H,
\end{equation*}
and applying $\varepsilon$ we obtain that $\varphi_A$ is a morphism of $H$--modules.
\fin

\section{The case of a cocycle Singer pair}
\label{section:singerpairs}
In this section we consider the general construction of a star for a cocycle linked pair of $*$--Hopf algebras in the case that $H$ is cocommutative and $A$ is commutative. 
Notice that in this situation the antipodes of $A$ and of $H$ are involutive.  

\subsection{General definitions}
\begin{defi}\label{defi:csingerpair} 
A cocycle linked pair of Hopf algebras $(A,H,\trb,\rho,\chi,\psi)$ is said to be a \textit{cocycle Singer pair} if $H$ is cocommutative, $A$ is commutative, 
and the cocycle and cococycle $\chi$ and $\psi$ are invertible.  
\end{defi}
The above definition is motivated by the fact that a linked pair of Hopf algebras $(A,H,\trb,\rho)$ in which  $A$ is a commutative Hopf algebra and $H$ is a
cocommutative Hopf algebra is called a \emph{Singer pair} (see for example \cite{kn:masuoka}).
\begin{obse}
\label{rem:singer}
\begin{enumerate}
\item Notice that if we have a cocycle Singer pair, then condition \eqref{c7} is authomatically verified,
and it is easy to see that we can remove $\chi$ and $\psi$ from relations \eqref{a4}, \eqref{b4}, \eqref{c5}, \eqref{c6}.
\item
Hence in this case being $H$ is cocommutative, $A$ commutative and $\chi$ and $\psi$ are convolution invertible, then
$(A,H,\trb,\rho,\chi,\psi)$ is a cocycle linked pair of bialgebras if and only if
$(A,H,\trb,\rho)$ is a linked pair of bialgebras and $\chi$ and $\psi$ verify relations
\eqref{a5}, \eqref{a6}, \eqref{b5}, \eqref{b6}, \eqref{c3}, \eqref{c4}, \eqref{c8}.
\end{enumerate}
\end{obse}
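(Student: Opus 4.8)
The whole statement rests on one elementary consequence of the Singer hypotheses (Definition~\ref{defi:csingerpair}): since $H$ is cocommutative and $A$ commutative, every linear map from a tensor power of $H$ to a tensor power of $A$ --- in particular $\chi\in\Hom(H\otimes H,A)$ and $\psi\in\Hom(H,A\otimes A)$ --- is central for any convolution product in which it occurs, even when the ambient convolution algebra is \emph{not} itself commutative: such a factor only touches the $A$--tensor slots, two elements of $A$ always commute, and the relevant Sweedler legs of $H$ may be permuted freely. Since $\chi$ and $\psi$ are convolution invertible, each of them can accordingly be collected on one side of any of the relations and cancelled. Before doing so I would record that \eqref{c7} is automatic: in $\sum x_{2H}\otimes(x_1\trb a)x_{2A}$, swap $x_1\leftrightarrow x_2$ by cocommutativity of $H$ and commute the two factors of $A$ to obtain $\sum x_{1H}\otimes x_{1A}(x_2\trb a)$.

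Now I would reduce \eqref{a4}: fixing $a\in A$ and setting $F(x\otimes y)=x\trb(y\trb a)$, $G(x\otimes y)=(xy)\trb a$, the relation is the identity $F\star\chi=\chi\star G$ in the commutative convolution algebra $\Hom(H\otimes H,A)$, so cancelling the invertible central element $\chi$ yields $F=G$, i.e. the action axiom $x\trb(y\trb a)=(xy)\trb a$ --- which is \eqref{a4} with $\chi$ trivial --- and conversely convolving the action axiom with $\chi$ returns \eqref{a4} (compare Observation~\ref{rem:action}). The same mechanism handles \eqref{c6}: there $\chi$ appears in the $A$--slot of maps $H\otimes H\to H\otimes A$, and although $\Hom(H\otimes H,H\otimes A)$ is not commutative, $\chi$ (factoring through the cocommutative $H\otimes H$ and valued in $A$) is still central in it, so cancelling it turns \eqref{c6} into $\rho(xy)=\sum x_{1H}y_{1H}\otimes x_{1A}(x_2\trb y_{1A})$. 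Dually, $\psi$ is central and invertible for the natural $\Hom(H,A\otimes A)$--bimodule structures on the spaces occurring in \eqref{b4} and \eqref{c5}; cancelling $\psi$ turns \eqref{b4} into the coassociativity axiom $(\rho\otimes\id_A)\rho=(\id_H\otimes\Delta_A)\rho$ and \eqref{c5} into $\Delta_A(x\trb a)=\sum(x_{1H}\trb a_1)\otimes x_{1A}(x_2\trb a_2)$. This proves the first assertion.

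For the second assertion I would simply assemble. Of the defining relations of a cocycle linked pair, \eqref{a1}, \eqref{a2}, \eqref{a3}, \eqref{b1}, \eqref{b2}, \eqref{b3}, \eqref{c1}, \eqref{c2} do not mention $\chi$ or $\psi$; by the previous paragraph (and since $\chi,\psi$ are convolution invertible) relations \eqref{a4}, \eqref{b4}, \eqref{c5}, \eqref{c6} are equivalent to their trivial-cocycle forms just displayed, and these, together with the previous group, are exactly the axioms making $(A,H,\trb,\rho)$ a linked pair of bialgebras (Observation~\ref{obse:actioncoaction}); \eqref{c7} is automatic; and the remaining relations are \eqref{a5}, \eqref{a6}, \eqref{b5}, \eqref{b6}, \eqref{c3}, \eqref{c4}, \eqref{c8}, which are precisely the stated conditions on $\chi$ and $\psi$. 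Reading this in both directions gives the equivalence claimed.

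The delicate point is the centrality-and-cancellation step for \eqref{b4}, \eqref{c5} and \eqref{c6}, where the ambient convolution algebras are genuinely noncommutative: one must check carefully that the particular factors of $\chi$ and $\psi$ being moved do factor through the cocommutative coalgebra $H$ (resp. $H\otimes H$) and take values in the commutative algebra $A$ (resp. $A\otimes A$), and one must be sure that only the convolution invertibility of $\chi$ and $\psi$ --- and never \eqref{a5}, \eqref{b5} or \eqref{c8} --- is used, so that the resulting equivalences hold in both directions.
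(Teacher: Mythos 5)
Your argument is correct and is essentially the route the paper intends (and leaves unwritten): the paper's Observation \ref{rem:action}(2) already notes that an invertible $\chi$ central in $\Hom(H\otimes H,A)$ cancels in \eqref{a4}, with dual considerations for $\psi$, and your proof simply carries out this centrality-and-cancellation mechanism in detail for \eqref{a4}, \eqref{b4}, \eqref{c5}, \eqref{c6} together with the cocommutativity/commutativity swap showing \eqref{c7} is automatic. The verification that the embedded factors $1_H\otimes\chi$ and $1_H\otimes\psi$ remain central in the noncommutative ambient algebras $\Hom(H\otimes H,H\otimes A)$ and $\Hom(H,H\otimes A\otimes A)$, and that only invertibility (never \eqref{a5}, \eqref{b5}, \eqref{c8}) is used, is exactly the point that makes the stated equivalence two-sided, so nothing is missing.
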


\begin{obse} 

In this case the conditions characterizing the structure become simpler.  
\begin{enumerate}
\item Conditions for $(\trb,\chi)$.
\[x\trb 1  = \varepsilon(x)1,\,\, x\trb ab = \sum (x_1\trb a)(x_2\trb b),\,\, 1\trb a = a,\]
\[x \trb (y \trb a)=
(xy)\trb a,\] 
\begin{equation}\sum \big( x_1\trb \chi(y_1,z_1)\big)\,\chi(x_2,y_2z_2)
=
\sum \chi(x_1,y_1)\, \chi(x_2 y_2,z), 
\chi(x,1)= \chi(1,x)=\varepsilon(x)1.\label{a41}
\end{equation}
for all $x,y,z\in H$, $a,b\in A$.
\item Conditions for $(\rho,\psi)$.
\[\sum\varepsilon(x_H)x_A =
\varepsilon(x)1,\,\, 
\sum x_{H1} \otimes x_{H2} \otimes x_A=
\sum x_{1H} \otimes x_{2H} \otimes x_{1A}x_{2A},
\sum x_H\varepsilon(x_A)=x\]
\[\sum x_{HH} \otimes x_{HA} \otimes x_A
=
\sum x_H \otimes x_{A1}  \otimes x_{A2},\]
\[\sum x_{1I1} x_{2HI} \otimes x_{1I2} x_{2HII}
\otimes x_{1II} x_{2A} =
\sum x_{1I} \otimes x_{1II1} x_{2I} \otimes  x_{1II2} 
x_{2II},\notag\]  
\begin{equation}\sum\varepsilon(x_I)x_{II}
=\sum x_I\varepsilon(x_{II})=\varepsilon(x)1,\label{b41}\end{equation}
for all $x\in H$.

\item Compatibility conditions for $(\trb,\ \rho,\ \chi,\ \psi)$. 
\[\varepsilon(x\trb a) = \varepsilon(x)\varepsilon(a), 
\sum 1_A\otimes 1_H = 1\otimes 1 ,
\varepsilon\big(\chi(x,y)\big) =
\varepsilon(x)\varepsilon(y),
\sum 1_I\otimes 1_{II}=
1\otimes 1 ,\]
\begin{align}
\sum (x\trb a)_1 \otimes (x \trb a)_2       
&=
\sum ({(x_1)}_H\trb a_1) \otimes {(x_1)}_A (x_2\trb a_2), \label{sc5}\\
\sum (xy)_H\otimes (xy)_A
&=
\sum {(x_1)}_H\, y_H \otimes {(x_1)}_A \big(x_2\trb y_A\big)
, \label{sc6}
\end{align}

\begin{equation}\label{eqn:sc7}
\Delta \chi \star \psi m = (\psi \otimes \varepsilon) \star \theta \star (1 \otimes \chi),
\end{equation} 
with 
 \begin{equation}\theta(x,y)= \sum (x_{1H1} \trb y_{1I})\chi(x_{1H2},y_{2H})\otimes x_{1A}(x_2 \trb (y_{1II}y_{2A}))\label{eqn:imp2}.
\end{equation}
\end{enumerate}
\end{obse}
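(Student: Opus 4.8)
The plan is to show that, for a sextuple $(A,H,\trb,\rho,\chi,\psi)$ in which $H$ is cocommutative, $A$ is commutative and $\chi,\psi$ are convolution invertible, being a cocycle linked pair of bialgebras is equivalent to the conjunction of the relations displayed in (1)--(3); equivalently, I would check that each displayed relation is either one of \eqref{a1}--\eqref{c8} copied verbatim or a genuine simplification of one of them, and that the relation absent from the list --namely \eqref{c7}-- is automatically satisfied. Concretely, I expect that the only relations that change are \eqref{a4}, \eqref{b4}, \eqref{c5} and \eqref{c6}, where a cocycle is cancelled, together with \eqref{c7}, which becomes vacuous; everything else is a transcription.

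For groups (1) and (2) I would argue as in Observation~\ref{rem:action}(2) and its dual. Since $A$ is commutative and $H$ cocommutative, $\Hom(H\otimes H,A)$ is a commutative algebra, so the convolution-invertible $\chi$ is central in it; writing \eqref{a4} as $\mu_a\star\chi=\chi\star\nu_a$ with $\mu_a(x,y)=x\trb(y\trb a)$ and $\nu_a(x,y)=(xy)\trb a$ and cancelling $\chi$ yields $\mu_a=\nu_a$, i.e. $\trb$ is a genuine left $H$-action, the converse being immediate; relations \eqref{a1}--\eqref{a3}, \eqref{a5}, \eqref{a6} are copied without change. Dually, $\psi$ is central and convolution invertible in $\Hom(H,A\otimes A)$, hence $1_H\otimes\psi$ is central and invertible in $\Hom(H,H\otimes A\otimes A)$ --using that $1_H\otimes A\otimes A$ is a central subalgebra of $H\otimes A\otimes A$ (only $A$ commutative is needed) together with cocommutativity of $H$--; after using cocommutativity to interchange the two Sweedler legs of $x$, relation \eqref{b4} takes the form $\big((\rho\otimes\id)\rho\big)\star(1_H\otimes\psi)=\big((\id\otimes\Delta_A)\rho\big)\star(1_H\otimes\psi)$, and cancellation gives the coassociativity of $\rho$; relations \eqref{b1}--\eqref{b3}, \eqref{b5}, \eqref{b6} are unchanged.

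For the compatibility group: \eqref{c1}--\eqref{c4} are copied verbatim. Relation \eqref{c7} is automatic, since applying cocommutativity of $H$ to interchange $x_1$ and $x_2$ and then commutativity of $A$ turns each of its sides into $\sum x_{1H}\otimes x_{1A}(x_2\trb a)$. For \eqref{c5} I would work in the commutative algebra $\Hom(H,A\otimes A)$, write the relation as $\big(\Delta_A(-\trb a)\big)\star\psi=\psi\star G_a$ with $G_a$ the right-hand side of \eqref{sc5} regarded as a map $H\to A\otimes A$, and cancel the central invertible element $\psi$ to obtain \eqref{sc5}. For \eqref{c6} the same strategy works but the target must be chosen with care: $\chi$ is $A$-valued while \eqref{c6} also carries the $H$-valued piece $\rho(xy)$, so instead of $\Hom(H\otimes H,A)$ one works in $\Hom(H\otimes H,H\otimes A)$ with $\bar\chi=1_H\otimes\chi$; this $\bar\chi$ is convolution invertible, with inverse $1_H\otimes\chi^{-1}$, and central, because $1_H\otimes A$ is a central subalgebra of $H\otimes A$ (again only $A$ commutative is used) and $H\otimes H$ is cocommutative, and cancelling $\bar\chi$ in $\bar\chi\star(\rho\circ m)=E\star\bar\chi$ --with $E$ the right-hand side of \eqref{sc6}-- gives \eqref{sc6}. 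Finally, \eqref{c8} does not simplify further: the right-hand side \eqref{eqn:sc7} together with the expression \eqref{eqn:imp2} for $\theta$ is exactly the reformulation of \eqref{c8} obtained in Observation~\ref{rem:action}(4), whose derivation uses only \eqref{a2}, \eqref{b2} and \eqref{c7}, all of which are available here, \eqref{c7} being now automatic.

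I expect the only real work to be the Sweedler-calculus bookkeeping needed to bring \eqref{b4}, \eqref{c5} and \eqref{c6} into the shape ``(data)$\,\star\,$(cocycle)$\,=\,$(cocycle)$\,\star\,$(data)'' and, in the case of \eqref{c6}, the small but essential observation that one should enlarge the target from $A$ to $H\otimes A$, inside which $1_H\otimes A$ stays central even though $H$ need not be commutative; once the relations are in that shape, cancellation of the central convolution-invertible cocycle is routine.
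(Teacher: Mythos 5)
Your proposal is correct and follows essentially the same route as the paper, which (via Observation \ref{rem:singer} and Observation \ref{rem:action}(4)) merely notes that \eqref{c7} is automatic from cocommutativity of $H$ and commutativity of $A$, that the central convolution-invertible $\chi$ and $\psi$ can be cancelled in \eqref{a4}, \eqref{b4}, \eqref{c5}, \eqref{c6}, and that \eqref{c8} is rewritten as \eqref{eqn:sc7}--\eqref{eqn:imp2}. Your write-up simply supplies the bookkeeping (including the useful point that for \eqref{c6} one cancels $1_H\otimes\chi$ inside $\Hom(H\otimes H,H\otimes A)$), so there is nothing to correct.
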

\begin{obse}
\begin{enumerate}
\item Recall --see Observation \ref{obse:actioncoaction}-- that the above equations imply that the map $\trb: H \otimes A \rightarrow A$ is in fact an action and dually the map $\rho: H \rightarrow H \otimes A$ is a coaction. 
\item Condition \eqref{a2} implies that the convolution inverse of $\trb: H \otimes A \rightarrow A$ is:
\begin{equation}\label{eqn:invtriangle}
\trb^{-1}= \trb (\operatorname{id} \otimes \mathcal S).
\end{equation}
\item Similarly, condition \eqref{b2} yields the formula:
\begin{equation}\label{eqn:invrho}
\rho^{-1}= (\mathcal S \otimes \operatorname{id})\rho.
\end{equation}
\item If we apply $m(\mathcal S \otimes \operatorname{id})$ in equation \eqref{sc5} and use that the Hopf algebras are involutive, we obtain $\varepsilon(x)\varepsilon(a)1=\sum \mathcal S(x_{1H} \trb a_1)x_{1A}(x_2 \trb a_2)$ and using \eqref{eqn:invtriangle} we deduce that:
\begin{equation}\label{eqn:trianrho2}
x \trb a = \sum \mathcal S(x_H \trb {\mathcal S}(a))x_A\quad\textrm{or} \quad
\mathcal S(x \trb a) = \sum (x_H \trb {\mathcal S}(a))\mathcal S(x_A).
\end{equation}
\item Proceeding dually with equation \eqref{sc6} and $\rho$ we deduce that:
\begin{equation}\label{eqn:rhotrian}
\sum x_H \otimes x_A = \sum {\mathcal S}(\mathcal S (x_2)_H) \otimes (x_1 \trb \mathcal S (x_2)_A),
\end{equation}
or:
\begin{equation}\label{eqn:rhotrian2}
\sum \mathcal S(x_{2H}) \otimes (\mathcal S (x_1)\trb x_{2A}) = \sum \mathcal S(x)_H \otimes \mathcal S(x)_A,
\end{equation}
\end{enumerate}
\end{obse}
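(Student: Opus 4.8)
The plan is to prove the five items of the observation as identities in suitable convolution algebras, using only the module/comodule axioms of a cocycle Singer pair (as simplified in Observation~\ref{rem:singer}) together with the fact that $\ant_A$ and $\ant_H$ are involutions. For item~(1) there is nothing new to do: once $\chi$ and $\psi$ are cancelled, \eqref{a4} and \eqref{b4} become exactly associativity of $\trb$ and coassociativity of $\rho$, while \eqref{a3} and \eqref{b3} supply the unit and counit conditions, so $\trb$ is a genuine action and $\rho$ a genuine coaction; this is the content of Observation~\ref{obse:actioncoaction}.

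For item~(2) I would read \eqref{a1}--\eqref{a2} as the assertion that $\trb$, viewed as the map $a\mapsto(x\mapsto x\trb a)$ from $A$ into the convolution algebra $\Hom(H,A)$ (convolution taken with $\Delta_H$ and $m_A$), is a morphism of algebras on the bialgebra $A$. Since $A$ is a Hopf algebra, the convolution inverse of this morphism — computed now in $\Hom(H\otimes A,A)$, whose coalgebra side is $\Delta_A$ — is forced to be its precomposition with $\ant_A$, i.e. $\trb\compo(\id\otimes\ant)$; concretely $\sum(x_1\trb a_1)(x_2\trb\ant(a_2))=x\trb\bigl(\sum a_1\ant(a_2)\bigr)=\varepsilon(x)\varepsilon(a)1$ by \eqref{a2}, and symmetrically on the other side. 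As $\ant_A^2=\id$ this is \eqref{eqn:invtriangle}. Item~(3) is the formal dual: \eqref{b1}--\eqref{b2} say that $\rho$ is a coalgebra-type morphism on the Hopf algebra $H$, and one checks $\bigl(\rho\star((\ant\otimes\id)\rho)\bigr)(x)=\sum x_{1H}\ant(x_{2H})\otimes x_{1A}x_{2A}=\sum x_{H1}\ant(x_{H2})\otimes x_A=\varepsilon(x)(1\otimes1)$ using \eqref{b2}, the antipode axiom of $H$, and \eqref{b1}; hence $(\ant\otimes\id)\rho$ is the convolution inverse of $\rho$, which is \eqref{eqn:invrho} once $\ant_H^2=\id$ is invoked.

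For item~(4) I would apply $m_A\compo(\ant_A\otimes\id_A)$ to both sides of \eqref{sc5}: the left-hand side collapses, by the antipode axiom and \eqref{c1}, to $\varepsilon(x)\varepsilon(a)1$, leaving $\sum\ant(x_{1H}\trb a_1)\,x_{1A}\,(x_2\trb a_2)=\varepsilon(x)\varepsilon(a)1$. Read in the convolution algebra of item~(2), this says that $(x\otimes a)\mapsto\ant(x_H\trb a)\,x_A$ is a one-sided convolution inverse of $\trb$; since $\trb$ already has a two-sided inverse by item~(2), the two coincide, so $\ant(x_H\trb a)\,x_A=x\trb\ant(a)$, and replacing $a$ by $\ant(a)$ gives the first form of \eqref{eqn:trianrho2}, the second following on applying $\ant_A$ (an anti-algebra map, $A$ being commutative). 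Item~(5) is obtained identically, starting from \eqref{sc6}, using the counit/antipode axioms of $\rho$ and item~(3) in place of item~(2) to invert $\rho$.

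The calculations themselves are routine Sweedler bookkeeping; the one thing that genuinely requires care — and the main pitfall — is keeping straight which comultiplication ($\Delta_A$, $\Delta_H$, or the coaction $\rho$) is playing the role of the coalgebra structure in each of the several distinct convolution algebras that occur, and checking that the inverses produced in items~(4)--(5) are two-sided. This last point is exactly where the Singer-pair hypothesis is used: because $\trb$ and $\rho$ are honest module/comodule structures there, the inverses of items~(2)--(3) are two-sided and hence unique, which is what lets a merely one-sided inverse be identified with them.
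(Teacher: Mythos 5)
Your argument is correct and follows essentially the same route as the paper: you compute the two-sided convolution inverses $\trb(\id\otimes\ant)$ and $(\ant\otimes\id)\rho$ from \eqref{a1}--\eqref{a2} and \eqref{b1}--\eqref{b2}, then apply $m(\ant\otimes\id)$ to \eqref{sc5} (and dually to \eqref{sc6}) and identify the resulting one-sided inverse with the already known two-sided one, using $\ant^2=\id$ and commutativity/cocommutativity exactly as the paper does. Your closing remark that invertibility of $\trb$ and $\rho$ is what lets a one-sided inverse be promoted to the two-sided one is precisely the point the paper leaves implicit in the phrases ``using \eqref{eqn:invtriangle}'' and ``proceeding dually''.
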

We list some additional properties needed for our computations. 
\begin{obse} The map $\gamma: H \rightarrow A, \,\gamma(x)=\sum \chi^{-1}(x_2,   {\mathcal S}^{-1}(x_1))$ considered in \cite{kn:andrus2} can also be written as: $\gamma(x)=\sum \chi^{-1}(x_2, {\mathcal S}(x_1))= \sum \chi^{-1}(x_1, {\mathcal S}(x_2))$. 
\begin{enumerate}
\item Clearly, $\gamma^{-1}(x)=\sum \chi(x_1, {\mathcal S}(x_2))=\sum \chi(x_2, {\mathcal S}(x_1))$. 
\item From the cocycle equation \eqref{a5} one easily obtains:  
\begin{equation}\label{eqn:important0}
x \trb \chi^{-1}(y,z)= \sum \chi(x_1,y_1z_1)\chi^{-1}(x_2y_2,z_2)\chi^{-1}(x_3,y_3).
\end{equation}

Using the above equation \eqref{eqn:important0} and applying it to the situation that $x \otimes y \otimes z$ is substituted by
$y \otimes x_2 \otimes {\mathcal S}(x_1)$, we obtain the formula: 
\begin{equation}\label{eqn:final}
y \trb \gamma(x)= \sum \chi^{-1}(y_1x_1,{\mathcal S}(x_2))\chi^{-1}(y_2,x_3).
\end{equation}
\item Similarly, from equation \eqref{a5} we deduce that
\begin{equation}\label{eqn:useful}
\sum x_1 \trb \gamma^{-1}(\mathcal S(x_2))= \gamma^{-1}(x).
\end{equation}
\end{enumerate}
\end{obse}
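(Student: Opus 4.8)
The plan is to reduce the identity to the companion formula \eqref{eqn:final} by first trading $\gamma^{-1}$ for $\gamma$. Introduce the operator $\Phi\colon\Hom(H,A)\to\Hom(H,A)$ given by $\Phi(f)(x)=\sum x_1\trb f(x_2)$. Since $H$ is cocommutative, I would check (using \eqref{a2} and a harmless permutation of Sweedler components) that $\Phi$ is multiplicative for the convolution product, while $x\trb 1=\varepsilon(x)1$ makes it unital; hence $\Phi$ carries convolution inverses to convolution inverses. Next, as $\mathcal S$ is an anti-coalgebra map with $\varepsilon\mathcal S=\varepsilon$, one has $(f\circ\mathcal S)\star(g\circ\mathcal S)=(f\star g)\circ\mathcal S$ (cocommutativity of $H$ is used here as well), so the convolution inverse of $\gamma\circ\mathcal S$ is $\gamma^{-1}\circ\mathcal S$. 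Combining these, $\Phi(\gamma^{-1}\circ\mathcal S)$ is the convolution inverse of $\Phi(\gamma\circ\mathcal S)$; but $\Phi(\gamma^{-1}\circ\mathcal S)(x)$ is exactly the left-hand side of \eqref{eqn:useful}, so it suffices to prove
\[
\sum x_1\trb\gamma(\mathcal S(x_2))=\gamma(x),\qquad\forall x\in H.
\]

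To prove this last identity I would apply \eqref{eqn:final} with acting element $x_1$ and with $\mathcal S(x_2)$ inserted in the slot occupied by the argument of $\gamma$. Writing out five Sweedler components of $x$ and remembering that $\mathcal S$ reverses the comultiplication, this gives $\sum x_1\trb\gamma(\mathcal S(x_2))=\sum\chi^{-1}\bigl(x_1\mathcal S(x_5),\,\mathcal S(\mathcal S(x_4))\bigr)\,\chi^{-1}(x_2,\mathcal S(x_3))$. Since the antipodes of a cocycle Singer pair are involutive, $\mathcal S\mathcal S=\id$, so the inner entry is just $x_4$. Now cocommutativity of $H$ lets me permute the five components so that $x_1$ and the antipode of the component next to it appear as a single argument $x_1\mathcal S(x_2)$; the antipode axiom turns $\sum x_1\mathcal S(x_2)$ into $\varepsilon(\cdot)1$, and then $\chi^{-1}(1,-)=\varepsilon(-)1$ (immediate from \eqref{a6}) removes the leftover counits. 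What remains is $\sum\chi^{-1}(x_1,\mathcal S(x_2))=\gamma(x)$, as wanted, which completes the argument.

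Everything here is formal; the one delicate point is the Sweedler bookkeeping — tracking how $\mathcal S$ inverts iterated coproducts when it is pushed through $\gamma$ in \eqref{eqn:final}, and choosing the permutation of tensor components (legitimate by cocommutativity of $H$) that uncovers an antipode axiom. I note that the first step can be bypassed: deriving from \eqref{a5} the analogue of \eqref{eqn:important0} with $\chi$ in place of $\chi^{-1}$, substituting $x\otimes y\otimes z\mapsto y\otimes x_2\otimes\mathcal S(x_1)$ to obtain a closed formula for $y\trb\gamma^{-1}(x)$, and then running the same collapse would prove \eqref{eqn:useful} directly — but the intermediate expressions are bulkier, so the reduction above seems cleaner.
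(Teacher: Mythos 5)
Your argument is correct, and for \eqref{eqn:useful} it follows a genuinely different route from the one the paper intends: the paper's ``similarly, from \eqref{a5}'' points to a direct computation parallel to the derivation of \eqref{eqn:important0} and \eqref{eqn:final} (essentially the route you sketch in your closing remark), whereas you obtain \eqref{eqn:useful} structurally, by noting that both $f\mapsto\Phi(f)$ with $\Phi(f)(x)=\sum x_1\trb f(x_2)$ (unital and convolution-multiplicative by \eqref{a1}, \eqref{a2} and cocommutativity of $H$) and $f\mapsto f\circ\mathcal S$ (since $\mathcal S$ is an anti-coalgebra map and $H$ is cocommutative) preserve convolution inverses, so that \eqref{eqn:useful} becomes the convolution inverse of the single identity $\sum x_1\trb\gamma(\mathcal S(x_2))=\gamma(x)$. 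Your extraction of that identity from \eqref{eqn:final} is also sound: substituting $y\mapsto x_1$, $x\mapsto\mathcal S(x_2)$ gives $\sum\chi^{-1}\big(x_1\mathcal S(x_5),\mathcal S^2(x_4)\big)\,\chi^{-1}\big(x_2,\mathcal S(x_3)\big)$, and with $\mathcal S^2=\id$ (valid for a cocycle Singer pair) cocommutativity lets you rewrite this as $\sum\chi^{-1}\big(x_1\mathcal S(x_2),x_3\big)\,\chi^{-1}\big(x_4,\mathcal S(x_5)\big)$, which collapses to $\gamma(x)$ by the antipode axiom and $\chi^{-1}(1,-)=\varepsilon(-)1$. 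The trade-off: your reduction proves the $\gamma$-identity once and gets the $\gamma^{-1}$-statement for free, at the price of leaning on \eqref{eqn:final}; since \eqref{eqn:final} is derived from \eqref{a5} independently of \eqref{eqn:useful}, there is no circularity, but note that your proposal only covers item (3) of the Observation --- the rewritings of $\gamma$ and $\gamma^{-1}$ and the derivations of \eqref{eqn:important0} and \eqref{eqn:final} are taken as given, so a complete treatment would still include those (routine) verifications, or else run the direct $\chi$-analogue you mention, which is what the paper has in mind and which avoids the dependence on item (2) at the cost of repeating a Sweedler collapse of the same kind.
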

\begin{lema} \label{lema:preparation} In the above situation, the we have that:
\begin{equation}\label{eqn:moreimportant}
\sum y_1x_1 \trb \chi^{-1}(\mathcal S(x_2),\mathcal S(y_2))= \sum \gamma^{-1}(y_1x_1)\gamma(y_2)(y_3\trb \gamma(x_2))\chi(y_4,x_3).
\end{equation}
\end{lema}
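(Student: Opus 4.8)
The plan is to verify \eqref{eqn:moreimportant} by a direct computation in Sweedler notation, reducing both sides to the same ``normal form'' involving only $\chi$ and $\chi^{-1}$. The only non-formal ingredients are the cocycle relation for $\chi$, which enters exclusively through its two consequences \eqref{eqn:final} (describing $y\trb\gamma(x)$) and \eqref{eqn:important0} (describing the action of $H$ on a value $\chi^{-1}(y,z)$), together with the standing hypotheses of a cocycle Singer pair: $H$ is cocommutative, $A$ is commutative, and $\trb$ is a genuine module--algebra action. From cocommutativity one has $\Delta\mathcal S=(\mathcal S\otimes\mathcal S)\Delta$ and, crucially, complete freedom to relabel the Sweedler legs of $x$ and of $y$; from commutativity of $A$ one may reorder the resulting $\chi^{\pm1}$-factors freely; and one repeatedly uses the antipode axiom $\sum z_1\mathcal S(z_2)=\varepsilon(z)1$ and the convolution-inverse identities $\sum\chi(u_1,v_1)\chi^{-1}(u_2,v_2)=\varepsilon(u)\varepsilon(v)1=\sum\chi^{-1}(u_1,v_1)\chi(u_2,v_2)$.

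First I would reduce the right-hand side. Applying \eqref{eqn:final} to $y_3\trb\gamma(x_2)$ produces, immediately to the left of the trailing factor $\chi(y_4,x_3)$, a factor $\chi^{-1}(\,\cdot\,,\cdot\,)$ whose two arguments are coproduct-predecessors of those of $\chi(y_4,x_3)$; by the convolution-inverse identity this pair collapses to a counit, leaving $\sum\gamma^{-1}(y_1x_1)\,\gamma(y_2)\,\chi^{-1}(y_3x_2,\mathcal S(x_3))$. Expanding $\gamma^{-1}(y_1x_1)=\sum\chi(y_1x_1,\mathcal S(x_2)\mathcal S(y_2))$ (using $\Delta\mathcal S=(\mathcal S\otimes\mathcal S)\Delta$) and $\gamma(y_2)=\sum\chi^{-1}(y_2,\mathcal S(y_3))$ then brings the right-hand side to the form
\[
\sum\chi(y_1x_1,\mathcal S(x_2)\mathcal S(y_2))\,\chi^{-1}(y_3,\mathcal S(y_4))\,\chi^{-1}(y_5x_3,\mathcal S(x_4)),
\]
a sum of three factors in $\chi$ and $\chi^{-1}$ only.

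Next I would reduce the left-hand side. Applying \eqref{eqn:important0} to $(y_1x_1)\trb\chi^{-1}(\mathcal S(x_2),\mathcal S(y_2))$ --- with acting element $y_1x_1$ and arguments $\mathcal S(x_2),\mathcal S(y_2)$, using $\Delta\mathcal S=(\mathcal S\otimes\mathcal S)\Delta$ to distribute the antipodes over the coproducts --- yields a sum of three $\chi^{\pm1}$-factors, one of which contains a subword $y\,x_i\,\mathcal S(x_j)$ inside its first argument. Relabelling the Sweedler legs of $x$ (legitimate by cocommutativity) so that $x_j$ becomes the coproduct-successor of $x_i$, that subword collapses via $\sum z_1\mathcal S(z_2)=\varepsilon(z)1$; a further cocommutative relabelling of the remaining legs of $x$ and of $y$ (and use of $\mathcal S(x)\mathcal S(y)=\mathcal S(yx)$) then turns the left-hand side into exactly the displayed normal form of the right-hand side. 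Comparing the two yields \eqref{eqn:moreimportant}.

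The one delicate point, and the only real work, is the bookkeeping: each side carries on the order of five to six Sweedler legs of $x$ and of $y$, and \eqref{eqn:final}, \eqref{eqn:important0}, the convolution-inverse identities and the antipode axiom must be invoked at precisely the right positions, with cocommutativity of $H$ used each time to bring the relevant legs into adjacency so that the cancellations line up. There is no conceptual obstacle beyond the cocycle identity for $\chi$, which is already packaged into \eqref{eqn:final} and \eqref{eqn:important0}.
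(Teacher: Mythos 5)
Your proposal is correct and follows essentially the same route as the paper: the paper also applies \eqref{eqn:important0} with the substitution $x\otimes y\otimes z\mapsto \sum y_1x_1\otimes\mathcal S(x_2)\otimes\mathcal S(y_2)$, collapses the $x_i\,\mathcal S(x_j)$ subword using cocommutativity and the antipode axiom, and then identifies the result with the right-hand side via \eqref{eqn:final}. Your only (harmless) variation is organizational --- you reduce both sides to a common normal form of three $\chi^{\pm1}$-factors, whereas the paper reduces only the left-hand side and then invokes \eqref{eqn:final}; the intermediate expressions you display agree with the paper's \eqref{eqn:intermedia} up to cocommutative relabelling.
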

\begin{proof}
By substitution of $x \otimes y \otimes z$ with $\sum y_1x_1 \otimes {\mathcal S}(x_2) \otimes {\mathcal S}(y_2)$ in equation \eqref{eqn:important0} we obtain: 
\begin{equation*}
\sum y_1x_1 \trb \chi^{-1}({\mathcal S}(x_2),{\mathcal S}(y_2))=\sum \chi(y_1x_1,{\mathcal S}(x_6){\mathcal S}(y_5))\chi^{-1}(y_2x_2{\mathcal S}(x_5),{\mathcal S}(y_4))\chi^{-1}(y_3x_3,{\mathcal S}(x_4))
\end{equation*} 
\begin{equation}\label{eqn:intermedia}= \sum \chi(y_1x_1,{\mathcal S}(x_4){\mathcal S}(y_5))\chi^{-1}(y_2,{\mathcal S}(y_4))\chi^{-1}(y_3x_3,{\mathcal S}(x_2))
\end{equation}
Hence, using \eqref{eqn:final} we deduce our result from the above formula \eqref{eqn:intermedia}.
\end{proof}

\begin{obse} \label{obse:compatibility} Next we extract some consequences of equation \eqref{c8}.
\begin{enumerate}

\item If we change variables in equation \eqref{eqn:19'} substituting $x \otimes y \mapsto \sum x_1 \otimes \mathcal S(x_2),$ the left hand side becomes: $$\sum(\Delta \chi \star \psi m)(x_1,\mathcal S(x_2))= \Delta \gamma^{-1}(x),$$
and we obtain the expression:
\begin{equation}\label{eqn:impo2}
\Delta\gamma^{-1}= \psi \star \nu \star (1 \otimes \gamma^{-1})\,\,\, \text{and} \,\,\, \psi^{-1} \star \Delta\gamma^{-1} \star (1 \otimes \gamma) =  \nu,\,\, \nu(x)= \sum \theta(x_1,\mathcal S(x_2)).  
\end{equation}
\item By sustitution in equation \eqref{eqn:imp}, we obtain the following expression for $\nu$:
\begin{equation*}
\nu(x)= \sum \big(x_{1H1} \trb \mathcal S(x_3)_I\big)\chi\big(x_{1H2}, \mathcal S(x_4)_H\big)\otimes x_{1A}\big(x_2 \trb \mathcal S(x_3)_{II} \mathcal S(x_4)_A\big).
\end{equation*}    

Applying equation \eqref{eqn:rhotrian2} we change $\nu$ into:

\end{enumerate}
\begin{align*}
\nu(x)&= \sum \big(x_{1H1} \trb \mathcal S(x_3)_I\big)\chi\big(x_{1H2}, \mathcal S(x_{5H})\big)\otimes x_{1A}\big(x_2 \trb \big(\mathcal S(x_3)_{II} \big(\mathcal S(x_4) \trb  x_{5A}\big)\big)\big)\\
&= \sum \big(x_{1H1} \trb \mathcal S(x_4)_I\big)\chi\big(x_{1H2}, \mathcal S(x_{6H})\big)\otimes x_{1A}\big(x_2 \trb \mathcal S(x_4)_{II}\big)\big(x_3\mathcal S(x_5) \trb x_{6A}\big)\\
&= \sum \big(x_{1H1} \trb (\mathcal Sx_3)_I\big)\chi\big(x_{1H2}, \mathcal S(x_{4H})\big)\otimes x_{1A}\big(x_2 \trb (\mathcal Sx_3)_{II}\big)x_{4A}\\
&=\sum \big(x_{1H1} \trb (\mathcal Sx_3)_I\big)\chi\big(x_{1H2}, \mathcal S(x_{2H})\big)\otimes x_{1A}x_{2A}\big(x_4 \trb (\mathcal Sx_3)_{II}\big).
\end{align*} 
Next if we apply the morphism $\Delta \otimes \operatorname{id} \otimes \operatorname{id}$ to equation 
\eqref{b2} and then substitute above, we obtain:
\begin{equation*}
\nu(x)= \sum \big(x_{1H1} \trb (\mathcal Sx_2)_I\big)\chi\big(x_{1H2}, \mathcal S(x_{1H3})\big)\otimes x_{1A}\big(x_3 \trb (\mathcal Sx_2)_{II}\big)
\end{equation*}
\begin{equation}\label{eqn:goodeq}
= \sum \big(x_{1H1} \trb (\mathcal Sx_2)_I\big)\gamma^{-1}(x_{1H2}) \otimes x_{1A}\big(x_3 \trb (\mathcal Sx_2)_{II}\big).\end{equation}
\end{obse}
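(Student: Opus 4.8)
The plan is to specialize the master identity \eqref{eqn:19'} along the substitution $x\otimes y\mapsto\sum x_1\otimes\mathcal S(x_2)$ and to identify each side separately; the two displayed forms in \eqref{eqn:impo2} are then equivalent by convolution inversion, and the closed expression for $\nu$ ending in \eqref{eqn:goodeq} is obtained by expanding $\theta$ through \eqref{eqn:imp}. First I would treat the left-hand side $\Delta\chi\star\psi m$. Comultiplying the element $\sum x_1\otimes\mathcal S(x_2)$ in the coalgebra $H\otimes H$ and using that $\mathcal S$ reverses the coproduct on the second leg, the convolution evaluates to $\sum\Delta\big(\chi(x_1,\mathcal S(x_4))\big)\cdot\psi(x_2\mathcal S(x_3))$. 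The antipode axiom collapses $\sum x_2\mathcal S(x_3)$ to a counit, so by \eqref{c4} the factor $\psi$ contributes $1\otimes1$ and the indices shift down, leaving $\sum\Delta\big(\chi(x_1,\mathcal S(x_2))\big)=\Delta\gamma^{-1}(x)$ since $\gamma^{-1}(x)=\sum\chi(x_1,\mathcal S(x_2))$. This is the left-hand side of the first equality in \eqref{eqn:impo2}.

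Next I would treat the right-hand side $(\psi\otimes\varepsilon)\star\theta\star(1\otimes\chi)$, read as a convolution of maps $H\otimes H\to A\otimes A$. The three-fold coproduct of $\sum x_1\otimes\mathcal S(x_2)$ is $\sum(x_1\otimes\mathcal S(x_6))\otimes(x_2\otimes\mathcal S(x_5))\otimes(x_3\otimes\mathcal S(x_4))$; evaluating the outer factors gives $(\psi\otimes\varepsilon)(x_1\otimes\mathcal S(x_6))=\psi(x_1)\varepsilon(x_6)$, which kills the index $6$, and $(1\otimes\chi)(x_3\otimes\mathcal S(x_4))=1\otimes\chi(x_3,\mathcal S(x_4))$, whose adjacent pair is recognized as $1\otimes\gamma^{-1}$ because $\gamma^{-1}(z)=\sum\chi(z_1,\mathcal S(z_2))$. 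The middle factor $\theta(x_2,\mathcal S(\cdot))$ is precisely $\nu$ evaluated on one leg, by the definition $\nu(x)=\sum\theta(x_1,\mathcal S(x_2))$. The one point requiring care is that after these collapses the surviving $\theta$-leg straddles the $\gamma^{-1}$-leg in the coproduct ordering; here I would invoke the cocommutativity of $H$ to permute the coproduct legs freely and the commutativity of $A$ to reorder products in $A\otimes A$, thereby aligning the expression with $\psi\star\nu\star(1\otimes\gamma^{-1})$. This yields the first equality of \eqref{eqn:impo2}. The second equality, $\psi^{-1}\star\Delta\gamma^{-1}\star(1\otimes\gamma)=\nu$, then follows by convolving on the left with $\psi^{-1}$ and on the right with $1\otimes\gamma=(1\otimes\gamma^{-1})^{-1}$, these inverses existing because $\psi$ and $\chi$ (hence $\gamma$) are convolution invertible in a cocycle Singer pair.

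For the closed form of $\nu$ I would substitute $y=\mathcal S(x_2)$ into \eqref{eqn:imp}, distributing $\mathcal S$ across the coproduct (legitimate since cocommutativity of $H$ makes $\mathcal S$ a coalgebra map), which gives the first displayed expression for $\nu$. The ensuing chain is then pure Sweedler bookkeeping: \eqref{eqn:rhotrian2} rewrites $\mathcal S(x_4)_H\otimes\mathcal S(x_4)_A$, the module-algebra axiom \eqref{a2} splits the action over a product, the action property of $\trb$ merges $x_3\trb(\mathcal S(x_5)\trb x_{6A})=(x_3\mathcal S(x_5))\trb x_{6A}$, and the antipode--counit relations together with cocommutativity collapse the $\mathcal S$-annihilated pairs. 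Finally, applying $\Delta\otimes\id\otimes\id$ to \eqref{b2} consolidates the two separate applications of $\rho$ (on $x_1$ and on $x_2$) into a single one on $x_1$, after which $\sum\chi(x_{1H2},\mathcal S(x_{1H3}))$ is read off as $\gamma^{-1}(x_{1H2})$, producing \eqref{eqn:goodeq}.

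The main obstacle is the index bookkeeping on the right-hand side, specifically ensuring that cocommutativity of $H$ and commutativity of $A$ are applied correctly to re-align the straddling $\theta$-leg with the $\gamma^{-1}$-leg so that the collapsed expression becomes an honest convolution $\psi\star\nu\star(1\otimes\gamma^{-1})$. A secondary subtlety is the correct reading of the factors $\psi\otimes\varepsilon$ and $1\otimes\chi$ as maps $H\otimes H\to A\otimes A$, together with the coproduct reversal induced by $\mathcal S$ on the second tensor leg, which governs how the indices pair up before the antipode collapses them.
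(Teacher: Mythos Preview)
Your proposal is correct and follows essentially the same route as the paper: specialize \eqref{eqn:19'} along $x\otimes y\mapsto\sum x_1\otimes\mathcal S(x_2)$, collapse the left side to $\Delta\gamma^{-1}$ via the antipode axiom and \eqref{c4}, read off the right side as $\psi\star\nu\star(1\otimes\gamma^{-1})$ using cocommutativity of $H$ and commutativity of $A$, and then expand $\nu$ through \eqref{eqn:imp}, \eqref{eqn:rhotrian2}, \eqref{a2}, the action property, and finally \eqref{b2} (after applying $\Delta\otimes\id\otimes\id$) to reach \eqref{eqn:goodeq}. The only minor slip is your first mention that ``$\mathcal S$ reverses the coproduct on the second leg'': in the Singer-pair setting $H$ is cocommutative, so $\mathcal S$ is in fact a coalgebra map, which is exactly what you use (and state correctly) later.
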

\subsection{The structure of compact quantum group}

In the general situation of a cocycle linked pair each of whose components is equipped with a star structure, we have proved that for an arbitrary map $\gamma: H \rightarrow A$ if we define  $*:A^\psi\#_\chi H\rightarrow A^\psi\#_\chi H$ by the formula:
\begin{equation}
(a \# x)^{*} =\sum \gamma(x_1) (x_2^*\trb a^*) \# x_3^*;
\end{equation}
then conditions \eqref{cero}, \eqref{uno}, \eqref{dos}, \eqref{tres}, \eqref{cuatro} --or its versions \eqref{vcero}, \eqref{vuno}, \eqref{vdos}, \eqref{vtres}, \eqref{vcuatro}--are necessary and sufficient for $*$ to define a $*$--structure compatible with the bialgebra structure on $A^\psi\#_\chi H$. 

Next we prove that in the case of a cocycle Singer pair with the particular $\gamma$ considered in \cite{kn:andrus2}, i.e. $\gamma(x)= \sum \chi^{-1}(x_2,{\mathcal S}^{-1}(x_1))$ 
the above conditions are equivalent to those presented by Andruskiewitsch in \cite{kn:andrus2} with the numbers (3.2.1), (3.2.2), (3.2.6) and (3.2.7) that we write below as: 
\eqref{eqn:and1},\eqref{eqn:and2}, \eqref{eqn:and3} and \eqref{eqn:and4}, respectively.  

\begin{align}
(x \trb a)^* &= {\mathcal S}^{-1}(x^*) \trb a^*\label{eqn:and1}, \\
\chi(x,y)^*&= \chi^{-1}({\mathcal S}^{-1}(x^*),{\mathcal S}^{-1}(y^*))\label{eqn:and2},\\
\sum (x^*)_H \otimes (x^*)_A&=\sum \Big(\mathcal S\big({\mathcal S}^{-1}(x)_{H}\big)\Big)^*\otimes \big({\mathcal S}^{-1}(x)_A\big)^*\label{eqn:and3},\\
\sum (x^*)_I \otimes (x^*)_{II}&=\sum \big({\mathcal S}^{-1}(x)_{\widehat{I}}\big)^*\otimes \big({\mathcal S}^{-1}(x)_{\widehat{II}}\big)^*\label{eqn:and4}. 
\end{align}
\begin{obse}\label{obse:four}  Let $(A,H)$ be a cocycle Singer pair.
\begin{enumerate}
\item For a general $\gamma$, equation \eqref{vcero} is equivalent to the (convolution) invertibility of $\gamma$ and to the equality:
$\gamma^{-1}(x)=\sum x_1 \trb \gamma(x_2^*)^*$.

Indeed using the commutativity of $A$ and cocommutativity of $H$ we have:
\begin{equation}\sum \gamma(x_1)(x_2 \trb \gamma(x_3^*)^*)=\varepsilon(x)1=\sum (x_1 \trb \gamma(x_2^*)^*)\gamma(x_3).
\end{equation}
\item Notice also that for the particular $\gamma=\sum \chi^{-1}(x_2,{\mathcal S}^{-1}(x_1))$ using \eqref{eqn:useful}, from the condition above for $\gamma(x^*)^*$ and using that $\trb$ is an action, we deduce:
\begin{equation}\label{eqn:peculiar}
\gamma(x^*)^*=\gamma^{-1}(\mathcal Sx).
\end{equation}
\item In the case of a general $\gamma$ that is invertible \eqref{vuno} is equivalent to \eqref{eqn:and1}. Indeed from \eqref{vuno} we obtain --after cancelling $\gamma$-- that:
$\sum x_1 \trb (x_2^* \trb a^*)^* = \varepsilon(x)a$.

By acting with ${\mathcal S}(x)$ we deduce that: 
 
\begin{equation*}
 {\mathcal S}(x)\trb a = (x^* \trb a^*)^* \,\,\, \text{and} \,\,\, (x \trb a)^*= {\mathcal S}(x^*)\trb a^*.
\end{equation*}

\item We consider now the particular case that $\gamma(x)=\sum \chi^{-1}(x_2,{\mathcal S}^{-1}(x_1))$.
\begin{enumerate} 
\item Equation \eqref{vdos} is equivalent to \eqref{eqn:and2}. Indeed after some elementary manipulations it can be transformed into: 
\begin{equation}
\sum y_1x_1 \trb \chi(x_2^*,y_2^*)^*= \sum \gamma^{-1}(y_1x_1)\gamma(y_2)(y_3\trb \gamma(x_2))\chi(y_4,x_3)
\end{equation}

Then, the equality $\chi(x^*,y^*)^*=\chi^{-1}({\mathcal S}(x),{\mathcal S}(y))$ (or $\chi^{-1}(x^*,y^*)^*=\chi({\mathcal S}(x),{\mathcal S}(y))$ follows immediately from Lemma \ref{lema:preparation}, equation \eqref{eqn:moreimportant}. Clearly, this argument can be reversed.
\item If we start with equation \eqref{eqn:and2}, then  $\gamma(x^*)^* = \chi^{-1}(x_1^*,(\mathcal Sx_2)^*)^*=\chi(\mathcal Sx_1,x_2)$. Hence the equality $\gamma^{-1}(x)=\sum x_1 \trb \gamma(x_2^*)^*$ becomes $\chi(x_1,\mathcal Sx_2)=\sum x_1 \trb \chi(\mathcal Sx_2,x_3)$, that can be easily proved using \eqref{eqn:important0}. Hence, in the context of \cite{kn:andrus2}, condition \eqref{cero} is unnecessary. 
\end{enumerate}
\item Consider again the situation of a general $\gamma$. Once that the equality \eqref{eqn:and1} as well as the invertibility of $\gamma$ are guaranteed, we can prove that equation \eqref{vtres} is equivalent to \eqref{eqn:and3}. Indeed, in equation \eqref{vtres}, we can cancel $\gamma$ and obtain: $\sum (x^*)_H \otimes (x^*)_A = \sum (x_{1H})^* \otimes x_2^* \trb (x_{1A})^*= \sum (x_{1H})^* \otimes (\mathcal S(x_2) \trb x_{1A})^*$. 
Hence, to prove the equivalence, all we have to check is that \[\sum \mathcal S(\mathcal S(x)_H) \otimes \mathcal S(x)_A = \sum x_{1H} \otimes (\mathcal S(x_2) \trb x_{1A}), \]
and this is exactly equation \eqref{eqn:rhotrian2}. 
\item In the particular situation that $\gamma(x)= \sum \chi^{-1}(x_2,{\mathcal S}^{-1}(x_1))$ and using equation \eqref{eqn:impo2} one can write condition 
\eqref{cuatro} in the following manner:
\begin{equation}
\nu^{-1}(x) = \psi \star \Delta \gamma \star (1 \otimes \gamma^{-1})(x) = \sum \gamma(((x_2^*)_H)^*_1) (((x_2^*)_H)^*_2 \trb ((x_1^*)_I)^*) \otimes (x_3 \trb ((x_1^*)_{II}(x_2^*)_A)^*)\label{eqn:start}
\end{equation}
\end{enumerate}
\end{obse}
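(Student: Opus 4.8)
The plan is to prove \eqref{eqn:start} in two stages: first the formal identity $\nu^{-1}=\psi\star\Delta\gamma\star(1\otimes\gamma^{-1})$, which does not use condition \eqref{cuatro} at all, and then the fact that \eqref{cuatro} is precisely what turns $\nu^{-1}$ into the explicit expression displayed on the far right of \eqref{eqn:start}; finally I would note how the resulting identity is equivalent to Andruskiewitsch's condition \eqref{eqn:and4}.

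For the first stage I would take convolution inverses in the second relation of \eqref{eqn:impo2}, namely $\nu=\psi^{-1}\star\Delta\gamma^{-1}\star(1\otimes\gamma)$. Since $H$ is cocommutative and $A\otimes A$ is commutative, the convolution algebra $\Hom(H,A\otimes A)$ is commutative, so $\nu^{-1}=(\psi^{-1})^{-1}\star(\Delta\gamma^{-1})^{-1}\star(1\otimes\gamma)^{-1}$ with the three factors in any order. Here $(\psi^{-1})^{-1}=\psi$ and $(1\otimes\gamma)^{-1}=1\otimes\gamma^{-1}$; and because $\Delta_A\colon A\to A\otimes A$ is an algebra map one has $\Delta\gamma\star\Delta\gamma^{-1}=\Delta_A(\gamma\star\gamma^{-1})=\varepsilon(\cdot)\,1_{A\otimes A}$, so $(\Delta\gamma^{-1})^{-1}=\Delta\gamma$. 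This gives $\nu^{-1}=\psi\star\Delta\gamma\star(1\otimes\gamma^{-1})$, the first equality in \eqref{eqn:start}.

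For the second stage I would first rewrite \eqref{cuatro} in the starred form \eqref{vcuatro} (replacing $x$ by $x^*$ and using antimultiplicativity of $*$ and $\ant^2=\id$, exactly as in the passage from \eqref{cero}--\eqref{cuatro} to \eqref{vcero}--\eqref{vcuatro}). The key remark is that the left-hand side of \eqref{vcuatro} is precisely $(\Delta\gamma\star\psi)(x)$, so \eqref{vcuatro} amounts to the equality of maps $\Delta\gamma\star\psi=R$, where $R(x)$ is the right-hand side of \eqref{vcuatro}. Convolving on the right by $1\otimes\gamma^{-1}$ and using the commutativity of $\Hom(H,A\otimes A)$ together with the first stage, this is equivalent to $\nu^{-1}=R\star(1\otimes\gamma^{-1})$. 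It then remains to evaluate $R\star(1\otimes\gamma^{-1})$: the new $\gamma^{-1}$-leg lands in the second tensor factor next to the $\gamma(x_4)$ already present there, and using commutativity of $A$, cocommutativity of $H$ and $\gamma\star\gamma^{-1}=\varepsilon\,1_A$ the three legs carrying $\gamma(x_4)$, $x_5\trb(\,\cdot\,)$ and $\gamma^{-1}(x_6)$ collapse into a single leg carrying $x_4\trb(\,\cdot\,)$; simultaneously the factor $(x_2^*)_A(x_3^*)_A$ inside that $\trb$-argument, together with the double action $\big((x_3^*)_H\big)^*\trb(\,\cdot\,)$ in the first factor, is collapsed by means of \eqref{b2} (compatibility of $\rho$ with $\Delta_H$) and of the fact that $\trb$ is a genuine action in the cocycle Singer setting (Observation \ref{rem:singer}). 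This reduces the five-fold coproduct in $R$ to the three-fold one and produces exactly $\sum\gamma\big(((x_2^*)_H)^*_1\big)\big(((x_2^*)_H)^*_2\trb((x_1^*)_I)^*\big)\otimes\big(x_3\trb((x_1^*)_{II}(x_2^*)_A)^*\big)$.

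To close the equivalence with \eqref{eqn:and4}, I would substitute the explicit formula \eqref{eqn:goodeq} for $\nu$ into the identity just obtained and simplify both sides using \eqref{eqn:and3} (equivalently \eqref{vtres}, already shown to hold) and the identity \eqref{eqn:peculiar}, $\gamma(x^*)^*=\gamma^{-1}(\ant x)$; the $\gamma$'s and $\gamma^{-1}$'s cancel and what survives is $\psi(x^*)=\big(\psi^{-1}(\ant^{-1}x)\big)^{*\otimes*}$, which is \eqref{eqn:and4}. The main obstacle is the index bookkeeping of the second stage -- pushing the $*$-, $\rho$-, $\psi$- and $\trb$-operations through the five-fold coproduct and collapsing it to the three-fold one in the right order -- for which \eqref{a2}, \eqref{b2}, cocommutativity of $H$, commutativity of $A$ and $\gamma\star\gamma^{-1}=\varepsilon$ must be orchestrated carefully.
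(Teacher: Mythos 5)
Your treatment of item (6) is essentially correct and follows the route the paper intends: from \eqref{eqn:impo2} and the commutativity of the convolution algebra $\Hom(H,A\otimes A)$ (cocommutative $H$, commutative $A\otimes A$) one gets the unconditional identity $\nu^{-1}=\psi\star\Delta\gamma\star(1\otimes\gamma^{-1})$, and condition \eqref{vcuatro}, whose left-hand side is exactly $(\Delta\gamma\star\psi)(x)$, becomes --- after convolving with $1\otimes\gamma^{-1}$, collapsing $\gamma\star\gamma^{-1}=\varepsilon 1$ by commutativity of $A$ and cocommutativity of $H$, and merging the two $\rho$-legs via \eqref{b2} and comultiplicativity of $*$ --- precisely the right-hand side of \eqref{eqn:start}. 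Two caveats on this part: the collapse does not use that $\trb$ is an action (only \eqref{b2}, cocommutativity and the behaviour of $*$ under $\Delta$), so that invocation is spurious; and your closing claim that \eqref{eqn:and4} then drops out by substituting \eqref{eqn:goodeq} and ``cancelling the $\gamma$'s'' is far too quick --- in the paper that equivalence is the content of Lemma \ref{lema:changenu} and Theorem \ref{theo:equivalence} and requires computing $\nu(x^*)^*$, the auxiliary map $\mu$, and the operators $\pi_x$ --- though that equivalence lies outside the present statement.

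The genuine gap is coverage: the statement is the whole of Observation \ref{obse:four}, and you only prove its last item. Items (1)--(5) are left untouched, and they are not formal consequences of what you did: (1) requires the commutativity/cocommutativity argument showing that \eqref{vcero} says exactly that $x\mapsto\sum x_1\trb\gamma(x_2^*)^*$ is a two-sided convolution inverse of $\gamma$; (2) requires \eqref{eqn:useful} and the action property to obtain $\gamma(x^*)^*=\gamma^{-1}(\mathcal S x)$; (3) requires cancelling $\gamma$ in \eqref{vuno} and then acting with $\mathcal S(x)$ to reach \eqref{eqn:and1}; (4a) requires transforming \eqref{vdos} into the form matched by \eqref{eqn:moreimportant}, hence Lemma \ref{lema:preparation}; (4b) requires \eqref{eqn:important0}; and (5) requires cancelling $\gamma$ in \eqref{vtres} and identifying what remains with \eqref{eqn:rhotrian2}. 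Without these, the bulk of the observation --- in particular the equivalences \eqref{vuno}$\Leftrightarrow$\eqref{eqn:and1}, \eqref{vdos}$\Leftrightarrow$\eqref{eqn:and2} and \eqref{vtres}$\Leftrightarrow$\eqref{eqn:and3}, on which Theorem \ref{theo:equivalence} later depends --- remains unproven.
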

Next we proceed to compute $\nu(x^*)^*$ once we know that in our situation conditions \eqref{eqn:and1}--\eqref{eqn:and3} hold and assuming that $\gamma(x)= \sum \chi^{-1}(x_2,{\mathcal S}^{-1}(x_1))$.
\begin{lema}\label{lema:changenu} In the situation of a cocycle Singer pair and under the hypothesis of conditions \eqref{eqn:and1}--\eqref{eqn:and3}, and assuming that $\gamma(x)= \sum \chi^{-1}(x_2,{\mathcal S}^{-1}(x_1))$, we have that:
\begin{enumerate}
\item
\begin{equation}\label{eqn:nustar2} 
\nu(x^*)^*= \sum \gamma(\mathcal S(x_1)_{H1})\big(\mathcal S(x_1)_{H2} \trb (\mathcal S(x_3^*)_I)^*)\otimes \mathcal S(x_1)_{A}(\mathcal S(x_2) \trb (\mathcal S(x_3^*)_{II})^*).
\end{equation}
\item If we call $\mu(x)=\sum \gamma^{-1}(\mathcal S(x_1)_{H1})\big(\mathcal S(x_1)_{H2} \trb x_{3I}\big)\otimes \mathcal S(x_1)_{A}(\mathcal S(x_2) \trb x_{3II})$, then condition \eqref{cuatro} --see also condition \eqref{eqn:start}-- is equivalent to:
\begin{equation}\label{eqn:nuinvstar2}
\nu^{-1}(x^*)^*= \mu(x)
\end{equation}
\end{enumerate}
\end{lema}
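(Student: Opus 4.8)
For part (1), the plan is to expand the right-hand side of \eqref{eqn:goodeq} at $x^{*}$ and then push every $*$-operator inward. Since $*$ is comultiplicative on $H$ one has $(x^{*})_{i}=(x_{i})^{*}$, so after applying $*\otimes *$ to the two legs --- and using that the componentwise involution on $A\otimes A$ is multiplicative because $A$ is commutative, so factors may be freely transposed --- one invokes three things: \eqref{eqn:and1} in the form $(u\trb a)^{*}=\mathcal S(u^{*})\trb a^{*}$, to carry $*$ through each occurrence of $\trb$; the identity $\gamma^{-1}(y)^{*}=\gamma\bigl((\mathcal S y)^{*}\bigr)$ (a consequence of \eqref{eqn:and2}, cf. Observation \ref{obse:four}), for the $\gamma^{-1}$-factor; and the identity obtained from \eqref{eqn:and3} by applying $\Delta$ to its $H$-leg,
\[
\textstyle\sum (x^{*})_{H1}\otimes (x^{*})_{H2}\otimes (x^{*})_{A}=\sum \mathcal S\bigl(\mathcal S(x)_{H2}\bigr)^{*}\otimes \mathcal S\bigl(\mathcal S(x)_{H1}\bigr)^{*}\otimes\bigl(\mathcal S(x)_{A}\bigr)^{*},
\]
for the coaction terms. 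Cleaning up with $\mathcal S^{2}=\id$ and the fact that $\mathcal S$ is a $*$-map (both valid here, the antipodes being involutive), together with one use of the cocommutativity of $H$ to interchange the second and third legs of the iterated coproduct, produces \eqref{eqn:nustar2}. Observe that \eqref{eqn:and4} is never invoked --- this is exactly why the cococycle enters \eqref{eqn:nustar2} only through $\psi$ evaluated at $\mathcal S(x_{3}^{*})$.

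For part (2), I would first record the identity $\mu=\nu\circ\mathcal S$: starting again from \eqref{eqn:goodeq}, using that $H$ is cocommutative (so $\Delta_{2}\circ\mathcal S=(\mathcal S\otimes\mathcal S\otimes\mathcal S)\circ\Delta_{2}$) together with $\mathcal S^{2}=\id$, and then interchanging the two coproduct legs of $(\mathcal S x_{1})_{H}\in H$, reordering the two $A$-factors of the first leg, and interchanging $x_{2},x_{3}$ --- all legitimate by cocommutativity of $H$ and commutativity of $A$ --- one lands exactly on the defining expression for $\mu(x)$. Next, consider the conjugate-linear map $\sigma\colon\Hom(H,A\otimes A)\to\Hom(H,A\otimes A)$, $\sigma(f)(x)=\bigl(f(x^{*})\bigr)^{*}$ (with $A\otimes A$ carrying the componentwise involution). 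Because $A$ is commutative, $H$ is cocommutative, $*$ is comultiplicative on $H$ and on $A$, and $\varepsilon*=\overline{\varepsilon}$, the map $\sigma$ is an involutive automorphism of the convolution algebra $\Hom(H,A\otimes A)$ fixing its unit; in particular $f=g$ if and only if $\sigma(f)=\sigma(g)$. By Observation \ref{obse:four}, condition \eqref{cuatro} is equivalent to \eqref{eqn:start}, i.e.\ to the identity $\nu^{-1}=G$, where $G$ is the explicit map on the right-hand side of \eqref{eqn:start}. Applying $\sigma$, this is in turn equivalent to $\nu^{-1}(x^{*})^{*}=G(x^{*})^{*}$ for all $x\in H$. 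Hence the lemma reduces to the single identity $G(x^{*})^{*}=\mu(x)$.

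That identity I would establish by a computation of the same flavour as in part (1): substitute $x\mapsto x^{*}$ in $G$ (the inner $*$-operators in \eqref{eqn:start} cancel, since $(x^{*})_{i}=(x_{i})^{*}$), apply $*\otimes *$, reorder by commutativity of $A$, carry $*$ through the two $\trb$'s by \eqref{eqn:and1}, rewrite the $\gamma$-factor by \eqref{eqn:and2}, and distribute $\mathcal S(x_{3})$ over the product $(x_{1})_{II}(x_{2})_{A}$ by \eqref{a2}. One is then left with an expression containing $\mathcal S\bigl((x_{2})_{H}\bigr)$ and the coaction term $(x_{2})_{A}$ acted on by a leg of $\mathcal S(x_{3})$; the $\rho$--$\mathcal S$ compatibilities \eqref{eqn:rhotrian} and \eqref{eqn:rhotrian2} (both consequences of \eqref{sc6}), together with \eqref{a2}, \eqref{b2} and cocommutativity of $H$, serve to recombine these into a single coaction $\rho\circ\mathcal S$, and a final reindexing rewrites the answer as $\nu(\mathcal S x)=\mu(x)$. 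I expect this last step to be the main obstacle: the positions of $\psi$, $\rho$ and $\mathcal S$ along the Sweedler legs of $G(x^{*})^{*}$ and of $\mu$ do not match term by term, so the recombination requires a careful choice of which coproduct legs to split or merge before \eqref{eqn:rhotrian2} can be applied --- each manipulation is routine, but the bookkeeping is what needs care.
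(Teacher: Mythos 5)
Your proposal is correct and takes essentially the same route as the paper: part (1) is the same computation from \eqref{eqn:goodeq}, using \eqref{eqn:and3} with $\Delta$ applied to its $H$-leg, \eqref{eqn:and1}, the identity $\gamma^{-1}(y^*)^*=\gamma(\mathcal S y)$ coming from \eqref{eqn:and2}, cocommutativity and $\mathcal S^2=\id$, while in part (2) your reduction via the conjugation $f\mapsto\bigl(f((\cdot)^*)\bigr)^*$ is just the paper's ``direct substitution'', after which the identity $G(x^*)^*=\mu(x)$ follows by exactly the steps you list. The recombination you flag as the main obstacle is handled in the paper by a single application of $\Delta\otimes\id$ to \eqref{eqn:rhotrian2} (after using \eqref{eqn:and1}, $\gamma(x^*)^*=\gamma^{-1}(\mathcal S x)$, \eqref{a2} and a cocommutative reindexing), and your auxiliary identity $\mu=\nu\circ\mathcal S$ is true but not actually needed.
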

\begin{proof}

\begin{enumerate}
\item Performing a direct substitution in equation \eqref{eqn:goodeq} we obtain:
\begin{equation}
\nu(x^*)^*= \sum \big((x^*)_{1H1} \trb \mathcal S(x_2^*)_I\big)^*\gamma^{-1}(x^*_{1H2})^* \otimes ((x^*)_{1A})^*\big(x^*_3 \trb \mathcal S(x_2^*)_{II}\big)^*.\end{equation}
Using equation \eqref{eqn:and3} in the equation above, we obtain:
\begin{equation*}
\nu(x^*)^*= \sum \big(\mathcal S(\mathcal S(x_{1})_H)_1^* \trb \mathcal S(x_2^*)_I\big)^*\gamma^{-1}(\mathcal S(\mathcal S(x_{1})_H)_2^*)^* \otimes \mathcal S(x_1)_A\big(x^*_3 \trb \mathcal S(x_2^*)_{II}\big)^*
\end{equation*}
\begin{equation*}=\sum \big(\mathcal S(\mathcal S(x_{1})_H)_2^* \trb \mathcal S(x_3^*)_I\big)^*\gamma^{-1}(\mathcal S(\mathcal S(x_{1})_H)_1^*)^* \otimes \mathcal S(x_1)_A\big(x^*_2 \trb \mathcal S(x_3^*)_{II}\big)^*.
\end{equation*}
Then, applying the equalities \eqref{eqn:peculiar} and \eqref{eqn:and1}, we deduce the required result. 
\item
 By a direct substitution we check that condition \eqref{eqn:start} is equivalent to:
\begin{equation*}\nu^{-1}(x^*)^* = \sum \gamma((x_{2H1})^*)^* ((x_{2H2})^* \trb (x_{1I})^*)^* \otimes 
(x_3^* \trb ((x_{1II}x_{2A})^*)^*
\end{equation*}
\begin{equation*}
= \sum \gamma^{-1}(\mathcal S(x_{2H1})) (\mathcal S(x_{2H2}) \trb x_{1I}) \otimes 
(\mathcal S(x_3) \trb (x_{1II}x_{2A}))
\end{equation*}
\begin{equation*}
= \sum \gamma^{-1}(\mathcal S(x_{2H1})) (\mathcal S(x_{2H2}) \trb x_{1I}) \otimes 
(\mathcal S(x_3) \trb x_{1II})(\mathcal S(x_4) \trb x_{2A})
\end{equation*}
\begin{equation}\label{eqn:cuasifinal}
= \sum \gamma^{-1}(\mathcal S(x_{2H1})) (\mathcal S(x_{2H2}) \trb x_{4I}) \otimes 
(\mathcal S(x_3) \trb x_{4II})(\mathcal S(x_1) \trb x_{2A}).
\end{equation}
Applying $\Delta \otimes \operatorname{id}$ to equation \eqref{eqn:rhotrian2} we obtain:
\begin{equation}
\sum \mathcal S(x_{2H1}) \otimes \mathcal S(x_{2H2}) \otimes (\mathcal S (x_1)\trb x_{2A}) = \sum (\mathcal S x)_{H1} \otimes (\mathcal S x)_{H2} \otimes (\mathcal S x)_A.
\end{equation}
By a direct substitution in equation \eqref{eqn:cuasifinal} we obtain equation \eqref{eqn:nuinvstar2}.
\end{enumerate}
\end{proof}
Next we prove the main result in this section.
\begin{theo}\label{theo:equivalence}
In the situation of a cocycle Singer pair with $\gamma(x)= \sum \chi^{-1}(x_2,{\mathcal S}^{-1}(x_1))$, conditions 
\eqref{eqn:and1},\eqref{eqn:and2},\eqref{eqn:and3},\eqref{eqn:and4} and conditions \eqref{cero},\eqref{uno},\eqref{dos},\eqref{tres},\eqref{cuatro} are equivalent.
\end{theo}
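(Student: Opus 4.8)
The plan is to pair the two systems condition by condition, leaning on the reductions already assembled in Observation \ref{obse:four} and Lemma \ref{lema:changenu}, so that the only genuinely new work is the equivalence \eqref{cuatro}$\Leftrightarrow$\eqref{eqn:and4}. Throughout I would work with the starred forms \eqref{vcero}--\eqref{vcuatro} in place of \eqref{cero}--\eqref{cuatro}, and record at the outset that for the chosen $\gamma(x)=\sum\chi^{-1}(x_2,\ant^{-1}(x_1))$ the map $\gamma$ is automatically convolution invertible, with $\gamma^{-1}(x)=\sum\chi(x_1,\ant(x_2))$. Then Observation \ref{obse:four}\,(3) gives \eqref{vuno}$\Leftrightarrow$\eqref{eqn:and1}; Observation \ref{obse:four}\,(4a) gives \eqref{vdos}$\Leftrightarrow$\eqref{eqn:and2}; Observation \ref{obse:four}\,(4b) shows that \eqref{eqn:and2} already forces the identity $\gamma^{-1}(x)=\sum x_1\trb\gamma(x_2^*)^*$, which by Observation \ref{obse:four}\,(1) is \eqref{vcero}; and, assuming \eqref{eqn:and1}, Observation \ref{obse:four}\,(5) gives \eqref{vtres}$\Leftrightarrow$\eqref{eqn:and3}. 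Hence $\eqref{cero}\wedge\eqref{uno}\wedge\eqref{dos}\wedge\eqref{tres}$ is equivalent to $\eqref{eqn:and1}\wedge\eqref{eqn:and2}\wedge\eqref{eqn:and3}$, with \eqref{cero} subsumed once \eqref{eqn:and2} holds.

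It then remains to prove, under \eqref{eqn:and1}--\eqref{eqn:and3}, that \eqref{cuatro}$\Leftrightarrow$\eqref{eqn:and4}. By Lemma \ref{lema:changenu}\,(2), \eqref{cuatro} is equivalent to the identity $\nu^{-1}(x^*)^*=\mu(x)$. The clean way to finish is to introduce the operator $f\mapsto f^{\flat}$, $f^{\flat}(x)=f\bigl(\ant(x)^*\bigr)^*$, on $\Hom(H,A)$ and on $\Hom(H,A\otimes A)$. Because $A$ is commutative, $H$ is cocommutative, and the antipodes of $A$ and $H$ are involutive (so $\ant$ commutes with $*$), one checks that $\flat$ is an involution and a homomorphism for the (commutative) convolution product. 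From \eqref{eqn:peculiar} one gets $\gamma^{\flat}=\gamma^{-1}$, hence $(1\otimes\gamma^{-1})^{\flat}=1\otimes\gamma$ and $(\Delta\gamma)^{\flat}=\Delta\gamma^{-1}$; and a short unwinding shows that $\psi^{\flat}=\psi^{-1}$ is exactly condition \eqref{eqn:and4}. Moreover $\mu=\nu\circ\ant$ (after relabelling Sweedler legs and using the commutativity of $A$), and $\nu^{-1}(x^*)^*=(\nu^{-1})^{\flat}(\ant x)$, so the identity $\nu^{-1}(x^*)^*=\mu(x)$ for all $x$ is equivalent to $(\nu^{-1})^{\flat}=\nu$. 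Finally, applying $\flat$ to the factorisation $\nu^{-1}=\psi\star\Delta\gamma\star(1\otimes\gamma^{-1})$ coming from \eqref{eqn:impo2} yields $(\nu^{-1})^{\flat}=\psi^{\flat}\star\Delta\gamma^{-1}\star(1\otimes\gamma)$; comparing with $\nu=\psi^{-1}\star\Delta\gamma^{-1}\star(1\otimes\gamma)$ and cancelling the invertible factors $\Delta\gamma^{-1}$ and $1\otimes\gamma$ gives $(\nu^{-1})^{\flat}=\nu\iff\psi^{\flat}=\psi^{-1}\iff\eqref{eqn:and4}$. (Alternatively, one can avoid $\flat$ and simply substitute \eqref{eqn:and4} into the formula \eqref{eqn:nustar2} for $\nu(x^*)^*$ from Lemma \ref{lema:changenu}\,(1) and compare with $\mu$; the reverse implication then requires stripping the common $\gamma$- and $\rho$-contributions, e.g.\ by applying $\varepsilon$ to suitable legs and invoking that $H$ is coFrobenius, as in the proof of Proposition \ref{teo:linkedcompacto}.)

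The main obstacle is this last equivalence, and inside it the two elementary but hypothesis-sensitive checks: that $\flat$ is a convolution homomorphism and that $\mu=\nu\circ\ant$. Both genuinely use the cocycle Singer pair hypotheses --- commutativity of $A$, cocommutativity of $H$, and involutivity of both antipodes --- together with the auxiliary coaction identities \eqref{eqn:trianrho2} and \eqref{eqn:rhotrian2} that were used earlier to obtain \eqref{eqn:goodeq} and \eqref{eqn:nustar2}. Everything else is bookkeeping: invoking Observation \ref{obse:four} and Lemma \ref{lema:changenu} in the right order, and keeping track of which of \eqref{eqn:and1}--\eqref{eqn:and3} must already be in force before each step.
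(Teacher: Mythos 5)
Your proof is correct, and although its skeleton coincides with the paper's --- both reduce the equivalence of \eqref{cero}--\eqref{tres} with \eqref{eqn:and1}--\eqref{eqn:and3} to Observation \ref{obse:four} (using that for this particular $\gamma$ invertibility is automatic, $\gamma^{-1}(x)=\sum\chi(x_1,\ant(x_2))$), and both use Lemma \ref{lema:changenu}(2) to restate \eqref{cuatro} as $\nu^{-1}(x^*)^*=\mu(x)$ --- your handling of the decisive step \eqref{cuatro}$\Leftrightarrow$\eqref{eqn:and4} is genuinely different. The paper convolves the explicit expressions \eqref{eqn:nustar2} and \eqref{eqn:nuinvstar2}, i.e.\ computes $\sum\nu(x_1^*)^*\mu(x_2)$, reorganises it via \eqref{b2} into $\sum\pi_{\ant x_1}\big((\xi\star\psi)(x_2)\big)$ with $\pi_x(a\otimes b)=\sum(x_{1H}\trb a)\otimes x_{1A}(x_2\trb b)$, and concludes from $\sum\pi_{x_1}\pi_{\ant x_2}=\id$ that the required identity holds iff $\xi\star\psi=\varepsilon\,1\otimes1$, i.e.\ iff \eqref{eqn:and4}. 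You instead package the star-twist into $f\mapsto f^\flat$, $f^\flat(x)=f(\ant(x)^*)^*$, check it is a convolution-algebra involution (this is exactly where commutativity of $A$, cocommutativity of $H$ and $\ant^2=\id$, $\ant*=*\ant$ enter), note $\gamma^\flat=\gamma^{-1}$ from \eqref{eqn:peculiar} and that $\psi^\flat=\psi^{-1}$ is precisely \eqref{eqn:and4} (your $\psi^\flat$ is the paper's $\xi$), show $\mu=\nu\circ\ant$ by leg-relabelling (cocommutativity of $H$ plus commutativity of $A$; both are available), and then cancel the invertible factors in the factorisation $\nu=\psi^{-1}\star\Delta\gamma^{-1}\star(1\otimes\gamma)$ of \eqref{eqn:impo2} to obtain $(\nu^{-1})^\flat=\nu\iff\psi^\flat=\psi^{-1}$. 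I checked the two hypothesis-sensitive ingredients you flag, and they do hold, so your argument is complete once they are written out; what it buys is conceptual economy --- it avoids the long Sweedler computation and makes transparent that \eqref{eqn:and4} is the statement $\psi^\flat=\psi^{-1}$, on the same footing as $\gamma^\flat=\gamma^{-1}$ --- while the paper's route stays at the level of explicit formulas and never needs the homomorphism property of $\flat$ or the identity $\mu=\nu\circ\ant$. Only your closing parenthetical ``alternative'' is shaky as stated (substituting \eqref{eqn:and4} into \eqref{eqn:nustar2} controls $\nu(x^*)^*$ rather than $\nu^{-1}(x^*)^*$, and the coFrobenius cancellation is not the right tool here), but since it is an aside it does not affect the correctness of the main proof.
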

\begin{proof} In Observation \ref{obse:four} we took care of the equivalence between conditions 
\eqref{eqn:and1},\eqref{eqn:and2},\eqref{eqn:and3} and conditions \eqref{cero},\eqref{uno},\eqref{dos},\eqref{tres}. We need to consider the additional conditions \eqref{eqn:and4} and \eqref{cuatro}. Performing the convolution product of the right hand sides of equations \eqref{eqn:nustar2} and \eqref{eqn:nuinvstar2}, we have --for clarity we have abbreviated 
$\xi(x)=\sum q_I(x) \otimes q_{II}(x) = \sum (\mathcal S(x^*)_I)^* \otimes (\mathcal S(x^*)_{II})^*$ and $\psi(x)= \sum p_I(x) \otimes p_{II}(x) = \sum x_I \otimes x_{II}$:
\begin{equation*}
\sum \nu(x_1^*)^* \mu(x_2)= \sum \gamma(\mathcal S(x_1)_{H1})\big(\mathcal S(x_1)_{H2} \trb q_I(x_3))\gamma^{-1}(\mathcal S(x_2)_{H1})\big(\mathcal S(x_2)_{H2} \trb p_I(x_6)\big)
\end{equation*}
\begin{equation*}
\otimes \mathcal S(x_1)_{A}\mathcal S(x_2)_{A}(\mathcal S(x_4) \trb q_{II}(x_3))(\mathcal S(x_5) \trb p_{II}(x_6)\big)
\end{equation*}
\begin{equation*}
=\sum \gamma(\mathcal S(x_1)_{H1})\gamma^{-1}(\mathcal S(x_2)_{H1})\big(\mathcal S(x_1)_{H2} \trb q_I(x_3))\big(\mathcal S(x_2)_{H2} \trb p_I(x_5)\big)
\end{equation*}
\begin{equation}\label{eqn:almost}
\otimes \mathcal S(x_1)_{A}\mathcal S(x_2)_{A}(\mathcal S(x_4) \trb (q_{II}(x_3)p_{II}(x_5))\big).
\end{equation}
The equation \eqref{b2}, applied to $\mathcal S(x)$ yields:
\begin{equation}
\sum \mathcal S(x)_{H1} \otimes \mathcal S(x)_{H2} \otimes \mathcal S(x)_A
=
\sum \mathcal S(x_1)_H \otimes \mathcal S(x_2)_H \otimes \mathcal S(x_1)_A \mathcal S(x_2)_A,  
\end{equation}
and applying $\Delta \otimes \Delta \otimes \operatorname{id}$ to the above equation we get:
\begin{align*}
\sum \mathcal S(x)_{H1} \otimes \mathcal S(x)_{H2}\otimes \mathcal S(x)_{H3}\otimes \mathcal S(x)_{H4}\otimes \mathcal S(x)_A\\
=
\sum \mathcal S(x_1)_{H1}\otimes \mathcal S(x_1)_{H2} &\otimes \mathcal S(x_2)_{H1} \otimes \mathcal S(x_2)_{H2} \otimes \mathcal S(x_1)_A \mathcal S(x_2)_A.  
\end{align*}
By substitution in equation \eqref{eqn:almost}
we obtain:
\begin{equation*}
\sum \gamma(\mathcal S(x_1)_{H1})\gamma^{-1}(\mathcal S(x_2)_{H1})\big(\mathcal S(x_1)_{H2} \trb q_I(x_3))\big(\mathcal S(x_2)_{H2} \trb p_I(x_5)\big)
\end{equation*}
\begin{equation*}
\otimes \mathcal S(x_1)_{A}\mathcal S(x_2)_{A}(\mathcal S(x_4) \trb (q_{II}(x_3)p_{II}(x_5))\big)
\end{equation*}
\begin{equation*}
=\sum \gamma(\mathcal S(x_1)_{H1})\gamma^{-1}(\mathcal S(x_1)_{H2})\big(\mathcal S(x_1)_{H3} \trb q_I(x_2))\big(\mathcal S(x_1)_{H4} \trb p_I(x_4)\big)
\end{equation*}
\begin{equation*}
\otimes \mathcal S(x_1)_{A}(\mathcal S(x_3) \trb (q_{II}(x_2)p_{II}(x_4))\big)
\end{equation*}
\begin{equation*}
=\sum \mathcal S(x_1)_{H} \trb (q_I(x_3)p_I(x_4))
\otimes \mathcal S(x_1)_{A}(\mathcal S(x_2) \trb (q_{II}(x_3)p_{II}(x_4))\big)
\end{equation*}
\begin{equation}\label{eqn:aunpaso}
= \sum \pi_{\mathcal Sx_1}(q_I(x_2)p_I(x_3)\otimes q_{II}(x_2)p_{II}(x_3)) = \sum \pi_{\mathcal Sx_1}((\xi \star \psi)(x_2)),
\end{equation}
Where the map $\pi_x: A \otimes A \rightarrow A \otimes A$ is defined as follows: $\pi_x(a \otimes b)= \sum (x_{1H} \trb a) \otimes x_{1A}(x_2 \trb b)$. 
We have proved that:
\[\sum \nu(x_1^*)^* \mu(x_2)= \sum \pi_{\mathcal Sx_1}((\xi \star \psi)(x_2))\]
An elementary computation shows that $\sum \pi_{x_1}\pi_{Sx_2} (a \otimes b)= a \otimes b$, and also that $\pi_x(1 \otimes 1)= \varepsilon(x) 1 \otimes 1$.  

Then, $\mu(x)=\nu^{-1}(x^*)^*$ if and only if $\sum \pi_{\mathcal Sx_1}((\xi \star \psi)(x_2))= \varepsilon(x) 1 \otimes 1$, i.e. if and only if $(\xi \star \psi)(x)=\varepsilon(x)1 \otimes 1 $, i.e. if and only if $\xi = \psi^{-1}$, that is exactly condition \eqref{eqn:and4} --see also Lemma \ref{lema:changenu}. 

\end{proof}
\subsection{Matched pair of groups.}
In the special case of a pair of Hopf algebras of the form $A=\mathbb C^ G$ and $H=\mathbb C F$, where 
$F$ and $G$ are finite groups, a cocycle Singer pair can be produced directly at the level of the groups $F$ and $G$ by
enriching them with four maps $\tl$, $\tr$, $\sigma$ and $\tau$ that we describe below. This construction
has been presented in \cite{kn:kac} and further studied in \cite{kn:masuoka}.

Using this description and the results of the general case, we give necessary compatibility conditions between the four
maps mentioned above, for the product Hopf algebra to be a $*$-Hopf algebra and a CQG.

\begin{defi}\label{defi:matchedgroups}
A \emph{matched pair of groups} (\cite{kn:takeuchi}) is a quadruple $(F,G,\tl,\tr)$ where $F$ and $G$ are
groups and
\[
\begin{array}{c}
G \stackrel{\triangleleft}{\leftarrow} G\times F\stackrel{\triangleright}{\to}F,\\
g\triangleleft f \leftarrow (g,f) \to g\triangleright f
\end{array}
\]
are actions of the groups $F$ and $G$ on the {\em sets}
$G$ and $F$ respectively, satisfying the conditions that follow:
\begin{align}
g\tr ff' &= (g\tr f)\big((g\tl f)\tr f'\big), 			\label{compat-lr-1}\\
gg'\tl f &= \big(g\tl (g'\tr f)\big)(g'\tl f), 			\label{compat-lr-2}
\end{align}
for all $f,f'\in G$ and $g,g'\in F$.
\end{defi}

\begin{obse}
\begin{enumerate}
\item It is easy to prove for a matched pair of groups that $1 \tl f=1$ and $g \tr 1 = 1$, $\forall f \in F, g \in G$.
\item
If $(F,G,\tl,\tr)$ is a matched pair of groups, then we define a product in the set $F\times G$ by
\[
(f,g)(f',g')=(f(g\tr f'),(g\tl f')g'),\quad \forall g,g'\in F,\ f,f'\in G.
\]
$F\times G$ with this product is a group that we call $F\bowtie G$.
If we apply the usual functor between groups and Hopf algebras sending a group to its group algebra and extend $\tl$
and $\tr$ in the obvious manner, then it is easy
to see that  $(F,G,\tl,\tr)$ is a matched pair of groups if and only if $(\C F,\C G,\tl,\tr)$ is a matched pair of Hopf
algebras and we have $\C F\bowtie \C G\cong \C (F\bowtie G)$.
\item
If $(F,G,\tl,\tr)$ is a matched pair of groups where $G$ is a finite group, it follows from Observation 
\ref{rem:bi-algcoalg} (1), 

that the quadruple $\left(\C^G,\C F,\trb,\rho\right)$ is a Singer pair.
Indeed, given the arrows
$G \stackrel{\triangleleft}{\leftarrow} G\times F\stackrel{\triangleright}{\to}F$, we define
$\C F \stackrel{\rho}{\to} \C F\otimes \C^G\stackrel{\trb}{\to} \C^G$ by:
\[
f\trb e_g = e_{g\triangleleft f^{-1}},\qquad \rho(f)=\sum_{g\in G} g\triangleright f\otimes e_g,
\qquad \forall f\in F,\ g\in G,
\]
where we have denoted as $\{e_g : g \in G\}$ the fundamental idempotents in $\C^G$.
\end{enumerate}
\end{obse}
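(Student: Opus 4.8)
The plan is to handle the three items of the statement in turn, since they are essentially independent.

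For item (1) I would specialize the two compatibility relations of Definition~\ref{defi:matchedgroups}. Setting $f'=1_F$ in \eqref{compat-lr-1} and cancelling $g\tr f$ on both sides gives $(g\tl f)\tr 1_F=1_F$ for all $g\in G$, $f\in F$; since $\tl$ is a right action of $F$ on the set $G$ we have $g\tl 1_F=g$, so taking $f=1_F$ yields $g\tr 1_F=1_F$ for every $g$. Dually, setting $g=1_G$ in \eqref{compat-lr-2} and cancelling $g'\tl f$ gives $1_G\tl(g'\tr f)=1_G$; since $\tr$ is a left action of $G$ on $F$ we have $1_G\tr f=f$, so taking $g'=1_G$ yields $1_G\tl f=1_G$ for every $f$.

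For item (2) one checks directly that $F\times G$ with the stated multiplication is a group. The element $(1_F,1_G)$ is a two-sided unit: on one side this uses $1_G\tr f=f$ and $1_G\tl f=1_G$, on the other $g\tr 1_F=1_F$ and $g\tl 1_F=g$, i.e. item (1) together with the action axioms. Associativity is the standard ``bicrossed product'' computation: expanding $\bigl((f,g)(f',g')\bigr)(f'',g'')$ and $(f,g)\bigl((f',g')(f'',g'')\bigr)$ and comparing components, the first components agree because $(g\tl f')g'\tr f''=(g\tl f')\tr(g'\tr f'')$ (action axiom for $\tr$) and then \eqref{compat-lr-1} applied to $f'$ and $g'\tr f''$ turns both into $f\bigl(g\tr(f'(g'\tr f''))\bigr)$; and the second components agree because \eqref{compat-lr-2} applied to $g\tl f'$, $g'$, $f''$ together with $(g\tl f')\tl(g'\tr f'')=g\tl\bigl(f'(g'\tr f'')\bigr)$ turns both into $\bigl(g\tl(f'(g'\tr f''))\bigr)(g'\tl f'')g''$. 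A right inverse of $(f,g)$ is obtained by solving $f(g\tr f')=1_F$ and $(g\tl f')g'=1_G$, i.e. $f'=g^{-1}\tr f^{-1}$ and $g'=(g\tl f')^{-1}$; since every element of $F\times G$ then has a right inverse, a standard monoid argument shows each such right inverse is in fact two-sided, so $F\bowtie G$ is a group. Extending $\tl$ and $\tr$ bilinearly to the group algebras, one observes that on grouplike elements the matched-pair-of-bialgebras conditions --$(A,\tr)$ a left $H$-module coalgebra, $(H,\tl)$ a right $A$-module coalgebra, and \eqref{compat-bi-lr-1}--\eqref{compat-bi-lr-3}-- reduce exactly to the hypotheses of Definition~\ref{defi:matchedgroups}: the module-coalgebra conditions hold because $g\tr f$ and $g\tl f$ are again grouplike, \eqref{compat-bi-lr-1} and \eqref{compat-bi-lr-2} become \eqref{compat-lr-1} and \eqref{compat-lr-2} (with the unit conditions $g\tr 1=1$, $1\tl f=1$ furnished by item (1)), and \eqref{compat-bi-lr-3} is vacuous on grouplikes. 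Hence $(\C F,\C G,\tl,\tr)$ is a matched pair of Hopf algebras, and reading off the product $(f\otimes g)(f'\otimes g')=f(g\tr f')\otimes(g\tl f')g'$ and coproduct $\Delta(f\otimes g)=(f\otimes g)\otimes(f\otimes g)$ of $\C F\bowtie\C G$ from the formulae in Observation~\ref{rem:action}, the linear map $f\otimes g\mapsto(f,g)$ is a Hopf algebra isomorphism $\C F\bowtie\C G\xrightarrow{\ \sim\ }\C(F\bowtie G)$.

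For item (3), since $G$ is finite I would apply Observation~\ref{rem:bi-algcoalg}(1) to the matched pair $(\C F,\C G,\tl,\tr)$ with $\C G$ in the role of the finite-dimensional factor; it produces a linked pair $\bigl((\C G)^\vee,\C F,\trb,\rho\bigr)=(\C^G,\C F,\trb,\rho)$ with $(f\trb\alpha)(g)=\alpha(g\tl f)$ and $\rho$ characterized by $\sum f_{\C F}\,f_{\C^G}(g)=g\tr f$. Evaluating the first formula on $\alpha=e_h$ and using that $g\mapsto g\tl f$ is a bijection of $G$ with inverse $g\mapsto g\tl f^{-1}$ (because $\tl$ is an action and $g\tl(ff^{-1})=g$) gives $f\trb e_h=e_{h\tl f^{-1}}$; expanding $\rho(f)$ in the basis $\{e_g\}$ gives $\rho(f)=\sum_{g\in G}(g\tr f)\otimes e_g$. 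These are the asserted formulae, and since $\C^G$ is commutative and $\C F$ is cocommutative, the linked pair is a Singer pair.

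There is no serious conceptual obstacle: the content is the standard ``matched pair of groups $\Leftrightarrow$ bicrossed product'' bookkeeping together with the dualization recorded in Observation~\ref{rem:bi-algcoalg}(1). The only points demanding care are the associativity verification in item (2) --making sure the action axioms for $\tl,\tr$ and the relations \eqref{compat-lr-1},\eqref{compat-lr-2} are applied to the correct arguments-- and, in item (3), keeping the two factors in the correct slots when passing to the dual so that the induced $\trb$ and $\rho$ come out precisely as stated.
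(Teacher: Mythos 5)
Your proposal is correct and follows the route the paper itself intends: the paper states this Observation without proof, and your argument simply carries out the routine verifications (specializing \eqref{compat-lr-1}--\eqref{compat-lr-2} for the unit identities, the standard bicrossed-product check for the group structure and the grouplike comparison for $\C F\bowtie \C G\cong\C(F\bowtie G)$, and the application of Observation \ref{rem:bi-algcoalg}(1) with the finite factor $\C G$ dualized for item (3)). The formulas you derive for $\trb$ and $\rho$ agree exactly with those asserted in the statement, so nothing is missing.
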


The above structure can be enriched by adding a cocycle $\chi: \C F \otimes \C F \to \C ^G$ and a cocoycle
$\psi: C^F \to \C ^G\otimes \C ^G$ in order to obtain a cocycle Singer pair.
Any pair of convolution invertible linear maps:
\[
\begin{array}{c}
\C F\otimes \C F \stackrel{\chi}{\to} \C^G\\
f\otimes f' \mapsto \chi(f,f')
\end{array}
\quad
\begin{array}{c}
\C F\stackrel{\psi}{\to} \C^G \otimes \C^G\\
f \mapsto \sum f_I\otimes f_{II}
\end{array}
\]
can be described in terms of the natural basis as two families of functions:
\[
\begin{array}{c}
G\times F\times F\stackrel{\sigma}{\to} \C^\times\\
(g,f,f') \mapsto \sigma(g;f,f')
\end{array}
\quad
\begin{array}{c}
G\times G\times F\stackrel{\tau}{\to} \C^\times\\
(g,g',f) \mapsto \tau(g,g';f)
\end{array}
\]
such that:
\[
\chi(f,f')=\sum_{g\in G}\sigma(g;f,f')e_g,\qquad \sum f_I\otimes f_{II}= \sum_{g,g'\in G}\tau(g,g';f)e_g\otimes e_{g'},
\qquad \forall f, \, f' \in F.
\]

Note that the invertibility of $\chi$ and $\psi$ is equivalent to the fact that $\sigma$ and $\tau$ take non zero values.

We write $\C^G\#_{\sigma,\tau}\C F$ for the vector space $\C^G\otimes\C F$ and
$e_g\# f$ for the element $e_g\otimes f$ of the standard basis.

\begin{obse}\label{teo:masuoka-Lemma-1.2}
In this situation, Observation \ref{rem:action}.\ref{rem:action-2}, Theorem \ref{teo:ext-hopf} and Observation \ref{rem:singer}, imply Lemma 1.2 in \cite{kn:masuoka}. 
Explicitly: the vector space $\C^G\#_{\sigma,\tau}\C F$ with the product, coproduct, unit and counit defined below
\begin{align*}
(e_g\# f)(e_{g'}\# f') &= \delta_{g\tl f,g'}\,\sigma(g;f,f')\, e_g\# ff',		
\\
\Delta(e_g\# f)
&= \sum_{g'g''=g}\tau(g',g'';f)\, e_{g'}\#(g''\tr f)\otimes e_{g''}\# f,
1_{\C^G\#_{\sigma,\tau}\C F} = \sum_g e_g\# 1, & \hspace*{.5cm} \varepsilon_{\C^G\#_{\sigma,\tau}\C F}(e_g\# f)= \delta_{g,1},	
\end{align*}
is a bialgebra if and only if $(F,G,\tl,\tr)$ is a matched pair of groups and $\sigma$ and $\tau$ verify the following conditions:
\begin{align}
\sigma(g\tl f;f',f'')\, \sigma(g;f,f'f'') &= \sigma(g;f,f')\, \sigma(g;ff',f''), \label{sigma-1}\\
\sigma(1;f,f')&= \sigma(g;1,f')=\sigma(g;f,1)=1,			\label{sigma-2} \\
\tau(gg',g'';f)\, \tau(g,g';g''\tr f) &= \tau(g',g'';f)\, \tau(g,g'g''; f), \label{tau-1}\\
\tau(1,g';f)&= \tau(g,1;f)=\tau(g,g';1)=1,			\label{tau-2} \\
\sigma(gg';f,f')\,\tau(g,g';ff') &=
\sigma(g;g'\tr f,(g'\tl f)\tr f')\, \sigma(g';f,f')\,
\tau(g,g';f)\, \tau(g\tl(g'\tr f),g'\tl f;f'), 			\label{compat-sigma-tau}
\end{align}
for all $g,g',g''\in G,\ f,f',f''\in F$. Moreover, $\C^G\#_{\sigma,\tau}\C F$ is a Hopf algebra with antipode:
\begin{align}
\ant(e_g \# f) &=
\sigma\left(g^{-1};g\triangleright f, (g\triangleright f)^{-1}\right)^{-1}
\tau\left(g^{-1},g;f\right)^{-1}e_{(g\triangleleft f)^{-1}}\#(g\triangleright f)^{-1}. 	\label{antipoda}
\end{align}
\end{obse}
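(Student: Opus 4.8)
The plan is to exhibit $\C^G\#_{\sigma,\tau}\C F$ as the cocycle bismash product $(\C^G)^\psi\#_\chi\C F$ attached to a concrete cocycle Singer pair, and then to read off every structure map (and the antipode) from the general formulas of Observation \ref{rem:action}.\ref{rem:action-2} and Theorem \ref{teo:ext-hopf}. First I would fix the dictionary. Put $A=\C^G$ with its standard Hopf algebra structure ($e_g e_{g'}=\delta_{g,g'}e_g$, $1_A=\sum_g e_g$, $\Delta e_g=\sum_{g'g''=g}e_{g'}\otimes e_{g''}$, $\varepsilon e_g=\delta_{g,1}$, $\ant e_g=e_{g^{-1}}$) and $H=\C F$ (group-like). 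As in the Observation preceding the statement, the arrows $\triangleleft,\triangleright$ give the action $f\trb e_g=e_{g\triangleleft f^{-1}}$ and the coaction $\rho(f)=\sum_g(g\triangleright f)\otimes e_g$, while $\sigma$ and $\tau$ (taking nonzero values) give the convolution invertible maps $\chi(f,f')=\sum_g\sigma(g;f,f')e_g$ and $\psi(f)=\sum_{g,g'}\tau(g,g';f)e_g\otimes e_{g'}$; since $\C F$ is cocommutative and group-like, convolution inverses are pointwise, so $\chi^{-1}(f,f')=\sum_g\sigma(g;f,f')^{-1}e_g$ and $\psi^{-1}(f)=\sum_{g,g'}\tau(g,g';f)^{-1}e_g\otimes e_{g'}$.

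Second, I would check that the cocycle–Singer–pair axioms for the data $(\C^G,\C F,\trb,\rho,\chi,\psi)$, evaluated on the basis $\{e_g\#f\}$, are exactly the matched–pair axioms \eqref{compat-lr-1}–\eqref{compat-lr-2} together with \eqref{sigma-1}–\eqref{compat-sigma-tau}. By the Observation preceding the statement (and Observation \ref{rem:bi-algcoalg}) the ``linked pair'' part $(\C^G,\C F,\trb,\rho)$ corresponds precisely to $(F,G,\triangleleft,\triangleright)$ being a matched pair of groups. By Observation \ref{rem:singer}, with $H$ cocommutative, $A$ commutative and $\chi,\psi$ invertible, the extra data $\chi,\psi$ need only satisfy \eqref{a5}, \eqref{a6}, \eqref{b5}, \eqref{b6}, \eqref{c3}, \eqref{c4}, \eqref{c8}. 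Evaluating on basis elements, \eqref{a6} becomes the normalization \eqref{sigma-2}, \eqref{a5} becomes the $2$–cocycle identity \eqref{sigma-1}, dually \eqref{b6} and \eqref{b5} become \eqref{tau-2} and \eqref{tau-1}; \eqref{c3} and \eqref{c4} are automatic; and \eqref{c8}, in the compact form \eqref{eqn:sc7} with $\theta$ as in \eqref{eqn:imp2}, becomes precisely the mixed condition \eqref{compat-sigma-tau}.

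Third, granting the conditions, I would substitute into the structure maps of $(\C^G)^\psi\#_\chi\C F$. For the product, group-likeness of $f$ collapses all of its Sweedler legs, and $e_g(f\trb e_{g'})=e_g e_{g'\triangleleft f^{-1}}=\delta_{g\triangleleft f,g'}e_g$, $e_g\chi(f,f')=\sigma(g;f,f')e_g$, so $(e_g\#f)(e_{g'}\#f')=\delta_{g\triangleleft f,g'}\,\sigma(g;f,f')\,e_g\#ff'$. For the coproduct, the three idempotent products in $\Delta(a\#x)=\sum a_1(x_1)_I\#(x_2)_H\otimes a_2(x_1)_{II}(x_2)_A\#x_3$ force the indices of $\psi(f)$ and $\rho(f)$ to be $(g',g'')$ and $g''$ with $g'g''=g$, giving $\sum_{g'g''=g}\tau(g',g'';f)\,e_{g'}\#(g''\triangleright f)\otimes e_{g''}\#f$; unit $\sum_g e_g\#1$ and counit $\delta_{g,1}$ are immediate. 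Thus the structure maps coincide with the stated ones, so $\C^G\#_{\sigma,\tau}\C F$ is a bialgebra by Observation \ref{rem:action}.\ref{rem:action-2}; conversely, evaluating associativity, counitality and multiplicativity of $\Delta$ on the basis reproduces exactly \eqref{sigma-1}–\eqref{compat-sigma-tau} and the matched–pair axioms, so these are also necessary. Finally, for the antipode I would plug $a\#x=e_g\#f$ into the formula of Theorem \ref{teo:ext-hopf}: writing $\rho(f)=\sum_k(k\triangleright f)\otimes e_k$, the idempotent $\ant(a\,x_{1A}\,x_{2\widehat{II}})=\ant(e_g e_k e_{h'})$ forces $k=h'=g$, and after the internal product the surviving $\psi^{-1}$–index is $g^{-1}$, leaving
\[
\ant(e_g\#f)=\tau(g^{-1},g;f)^{-1}\,\sigma\big((g\triangleleft f)^{-1};(g\triangleright f)^{-1},g\triangleright f\big)^{-1}\,e_{(g\triangleleft f)^{-1}}\#(g\triangleright f)^{-1},
\]
where I used $g^{-1}\triangleleft(g\triangleright f)=(g\triangleleft f)^{-1}$, itself \eqref{compat-lr-2} applied to $g^{-1}\cdot g=1$. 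A last application of the cocycle identity \eqref{sigma-1} with $f'=(g\triangleright f)^{-1}$, $f''=g\triangleright f$ (using $\sigma(\cdot\,;\cdot,1)=\sigma(\cdot\,;1,\cdot)=1$) rewrites the $\sigma$-factor as $\sigma\big(g^{-1};g\triangleright f,(g\triangleright f)^{-1}\big)$, yielding \eqref{antipoda}.

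The step I expect to be the real obstacle is the translation of \eqref{c8} into \eqref{compat-sigma-tau}: it is the unique relation that simultaneously involves $\trb$, $\rho$, $\chi$ and $\psi$, so one has to expand \eqref{eqn:imp2}, track which idempotents survive on each tensor leg, and match the resulting scalar identity term by term. The remaining conditions fall out of essentially mechanical evaluations on the basis, and the antipode computation becomes routine once the two matched–pair identities above are available.
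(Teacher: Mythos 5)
Your proposal is correct and takes essentially the same route the paper intends for this Observation: identify $(\C^G,\C F,\trb,\rho,\chi,\psi)$ as a cocycle Singer pair via the dictionary $f\trb e_g=e_{g\tl f^{-1}}$, $\rho(f)=\sum_g (g\tr f)\otimes e_g$, $\chi\leftrightarrow\sigma$, $\psi\leftrightarrow\tau$, translate the conditions singled out in Observation \ref{rem:singer} into \eqref{sigma-1}--\eqref{compat-sigma-tau}, and read the structure maps and the antipode off Observation \ref{rem:action} and Theorem \ref{teo:ext-hopf}. The one step you only sketch -- evaluating \eqref{c8} in the form \eqref{eqn:sc7}, \eqref{eqn:imp2} on group-likes -- does come out exactly as \eqref{compat-sigma-tau}, and your final simplification of the antipode coefficient via \eqref{sigma-1} and $g^{-1}\tl(g\tr f)=(g\tl f)^{-1}$ is the right one, so the argument is complete as proposed.
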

A pair $(\sigma, \tau)$ as above, verifying conditions

\eqref{sigma-1}--\eqref{compat-sigma-tau},
is called a pair of \textit{compatible normal cocycles}.

\begin{teo} \label{teo:mp-cqg}
Let $(F,G,\tl,\tr,\sigma,\tau)$ be such that $(F,G,\tl, \tr)$ is a matched pair of groups and $\sigma, \tau$ are
compatible normal cocycles. Let $\alpha:F\times G\to \C$ be an arbitrary map.
\begin{enumerate}
\item
The formula
\begin{align}
(e_g\# f)^\ast=\alpha\cc f^{-1},g\tl f\dd e_{g\tl f}\# f^{-1}
\label{eq:alfa}
\end{align}
defines a structure of $*$-Hopf algebra in $\C^G\#_{\sigma,\tau}\C F$ if and only if
\begin{align}
\alpha(1,g) 
= 
\alpha(f,1)&=1, \label{cond-0}\\
\alpha(f,g)\overline{\alpha\cc f^{-1},g\tl f\dd}
&=
1,  \label{cond-1}\\
\alpha(f_1,g)\alpha (f_2,g\tl f_1) \sigma(g;f_1,f_2) 
&=
\alpha(f_1f_2,g) \overline{\sigma\!\cc g\tl(f_1f_2);f_2^{-1},f_1^{-1}\dd}, \label{cond-2} \\	
\alpha(f,g_1g_2) \tau(g_1,g_2;f) 
&=
\alpha(g_2\tr f,g_1) \alpha(f,g_2) \overline{\tau\!\left( g_1\tl(g_2\tr f),g_2\tl f,f^{-1} \right)}, \label{cond-3} 
\end{align}
for all $g,g_1,g_2\in G,\ f,f_1,f_2\in F$.
\item
The pair $\left(\C^G\#_{\sigma,\tau}\C F,*\right)$ is a CQG if and only if $\sigma$ and
$\tau$ verifiy \eqref{cond-0}-\eqref{cond-3} and
\begin{align}
\alpha\!\cc f^{-1},g\tl f\dd\sigma\!\left(g;f,f^{-1}\right) &> 0,	\label{cond-compacta}
\end{align}
for all $g\in G,\ f\in F$.
\end{enumerate}
\end{teo}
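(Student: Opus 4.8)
\emph{Proof proposal.} The plan is to recognise $\C^G\#_{\sigma,\tau}\C F$ as the cocycle bismash product of a cocycle Singer pair and then deduce (1) from Theorem \ref{teo:* en AH} and (2) from Theorem \ref{teo:andrus1}. By Observation \ref{teo:masuoka-Lemma-1.2} the hypotheses (that $(F,G,\tl,\tr)$ be a matched pair of groups and $(\sigma,\tau)$ a pair of compatible normal cocycles, taken nowhere zero so that $\chi,\psi$ are convolution invertible) are exactly what makes $(\C^G,\C F,\trb,\rho,\chi,\psi)$ a cocycle Singer pair with $A=\C^G$, $H=\C F$ and $\chi,\psi$ built from $\sigma,\tau$ as stated; moreover $A$ and $H$ are finite CQG by Example \ref{ex:grupos}. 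To an arbitrary $\alpha\colon F\times G\to\C$ I attach the linear map $\gamma\colon\C F\to\C^G$ with $\gamma(f)=\sum_{g\in G}\alpha(f,g)\,e_g$; since $f^*=f^{-1}$ in $\C F$, $e_g^*=e_g$ in $\C^G$ and $f\trb e_g=e_{g\tl f^{-1}}$, the right-hand side of \eqref{eq:estrella en bismash} evaluated on $e_g\# f$ is precisely \eqref{eq:alfa}. Note that $\gamma(1)=1$ and $\varepsilon\gamma=\varepsilon$ hold iff \eqref{cond-0} holds, and these are forced in any $*$-bialgebra by $1^*=1$ and $\varepsilon*=\overline\varepsilon$.

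For part (1), granting \eqref{cond-0} so that Theorem \ref{teo:* en AH} applies, I would translate its five conditions \eqref{cero}--\eqref{cuatro} into relations among $\alpha,\sigma,\tau$ by substituting the descriptions of $\chi,\rho,\psi$ in terms of $\sigma,\tr,\tau$. Because every element of $F$ and $G$ is grouplike all Sweedler sums collapse and one is left with identities in the commutative algebra of idempotents $\C^G$, the only scalars being values of $\alpha,\sigma,\tau$ (the conjugate-linearity of $*$ introducing their complex conjugates). A bookkeeping computation then gives: \eqref{cero} is the requirement $(e_g\# f)^{**}=e_g\# f$, i.e. $\alpha(f,g)\,\overline{\alpha(f^{-1},g\tl f)}=1$, which is \eqref{cond-1}; conditions \eqref{uno} and \eqref{tres} reduce, using the commutativity of $\C^G$ and self-adjointness of the $e_g$, to the identities $(g\tl f^{-1})\tl f=g$ and $g\tr f^{-1}=\big((g\tl f^{-1})\tr f\big)^{-1}$, valid in any matched pair of groups (the latter by applying \eqref{compat-lr-1} to $1=g\tr(f^{-1}f)$), and so impose nothing new; \eqref{dos} becomes \eqref{cond-2} after relabelling and using $\sigma(g\tl f;f^{-1},f)=\sigma(g;f,f^{-1})$, a consequence of \eqref{sigma-1}--\eqref{sigma-2}; and \eqref{cuatro} becomes \eqref{cond-3} after using $\Delta(e_g)=\sum_{g_1g_2=g}e_{g_1}\otimes e_{g_2}$, the cocommutativity of $\C F$, and the same matched-pair identity. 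Reading these equivalences in both directions, together with the fact that a $*$-bialgebra structure forces \eqref{cond-0}, yields the stated ``if and only if''; no separate antipode check is needed, since $\C^G\#_{\sigma,\tau}\C F$ is already a Hopf algebra.

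For part (2) I would not use Theorem \ref{teo:simplificar} (which is tied to a particular $\gamma$) but go back to Theorem \ref{teo:andrus1}. Assuming \eqref{cond-0}--\eqref{cond-3}, part (1) gives a $*$-Hopf algebra which, being finite, is cosemisimple with unique normal integral $\Phi(e_g\# f)=\varphi_{\C^G}(e_g)\,\varphi_{\C F}(f)=\tfrac1{|G|}\,\delta_{f,1}$ (Lemma \ref{lema:cocycle-bismash-coss}); this $\Phi$ satisfies $\Phi*=\overline\Phi$. By Theorem \ref{teo:andrus1} the algebra is a CQG iff the Hermitian form $(z,w)\mapsto\Phi(w^*z)$ is positive definite. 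Using the product formula and \eqref{eq:alfa} one checks that $\{e_g\# f\}$ is an orthogonal basis for this form, with $\Phi\big((e_g\# f)^*(e_g\# f)\big)=\tfrac1{|G|}\,\alpha(f^{-1},g\tl f)\,\sigma(g\tl f;f^{-1},f)=\tfrac1{|G|}\,\alpha(f^{-1},g\tl f)\,\sigma(g;f,f^{-1})$ (the last step by \eqref{sigma-1}), which is a real number since $(e_g\# f)^*(e_g\# f)$ is self-adjoint and $\Phi*=\overline\Phi$. Hence positive definiteness is equivalent to $\alpha(f^{-1},g\tl f)\,\sigma(g;f,f^{-1})>0$ for all $g\in G$, $f\in F$, that is to \eqref{cond-compacta}.

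The only genuinely laborious step I anticipate is in part (1), the passage from \eqref{cuatro} to \eqref{cond-3}: although the grouplike hypothesis kills all Sweedler sums, one must unwind the three nested uses of $\rho$, $\psi$ and the coproduct of $\C^G$ and track carefully the order of the arguments of $\tau$ and of $\tl,\tr$ (and which values of $\sigma,\tau$ become conjugated). Everything else is routine once the dictionary $\gamma\leftrightarrow\alpha$, $\chi\leftrightarrow\sigma$, $\psi\leftrightarrow\tau$ is in place.
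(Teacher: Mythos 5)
Your proposal is correct and follows essentially the same route as the paper: for (1) it introduces $\gamma(f)=\sum_g\alpha(f,g)e_g$, notes that \eqref{eq:estrella en bismash} becomes \eqref{eq:alfa}, that \eqref{cond-0} is $\gamma(1)=1$, $\varepsilon\gamma=\varepsilon$, that \eqref{uno} and \eqref{tres} are automatic, and that \eqref{cero},\eqref{dos},\eqref{cuatro} translate to \eqref{cond-1},\eqref{cond-2},\eqref{cond-3} before invoking Theorem \ref{teo:* en AH}; for (2) it uses the integral $\Phi(e_g\# f)=\tfrac1{|G|}\delta_{f,1}$ and the positivity criterion of Theorem \ref{teo:andrus1}, exactly as in the paper. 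Your explicit remark that \eqref{cond-0} is forced by $1^*=1$ and $\varepsilon*=\overline{\varepsilon}$ is a small extra care for the ``only if'' direction that the paper leaves implicit.
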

\dem
For the first assertion, we recall that $\C^G$ and $\C F$ are CQG with respect to the $\ast$-structures:
\[
(e_g)^\ast = e_g,\quad f^\ast=f^{-1},\quad \forall g\in G,\ f\in F.
\]
If we define a linear map $\gamma:\C F\to \C^G$ by $\gamma(f)=\sum_{g\in G} \alpha(f,g)e_g$, for all $f\in F$, then the formula \eqref{eq:estrella en bismash} becomes 
\eqref{eq:alfa}. Moreover, \eqref{cond-0} is equivalent to $\gamma(1)=1$ and $\varepsilon\gamma=\varepsilon$, \eqref{uno} and \eqref{tres} are automatically verified, 
and conditions \eqref{cond-1}, \eqref{cond-2} and \eqref{cond-3} are equivalent to conditions \eqref{cero}, \eqref{dos} and \eqref{cuatro}, respectively.
Hence the first assertion follows from Theorem \ref{teo:* en AH}.  

\bigbreak

For the second assertion, we observe that the linear map
$\Phi : \C^G\#_{\sigma,\tau}\C F \rightarrow \C$ defined by
\(
\Phi(e_g\# f)=\frac{1}{|G|}\delta_{f,1}
\)
is a normal integral and
\[
\langle e_{g_1}\# f_1, e_{g_2}\# f_2\rangle_\Phi=\delta_{g_1,g_2}\,\delta_{f_1,f_2}\,\frac{1}{|G|}\,
\alpha\!\cc f_1^{-1},g_1\tl f_1\dd\sigma\!\left(g_1;f_1,f_1^{-1}\right)
,\quad \forall g_1,g_2\in G,\ f_1,f_2\in F.
\]
Then $\langle \ , \ \rangle_\Phi$ is positive definite if and only if \eqref{cond-compacta} follows.
\qed

\begin{obse}\label{obsesivo}
We consider two particular cases of the above theorem.

If we take $\alpha(f,g)=\sigma\!\cc g;f,f^{-1}\dd^{-1}$ which corresponds to $\gamma(x)=\sum\chi^{-1}\cc x_2,\ant^{-1}(x_1)\dd$, then conditions \eqref{cond-0}-\eqref{cond-3} are equivalent to
\[
|\sigma(g_1;f_1,f_2)|=|\tau(g_1,g_2;,f_1)|=1,\qquad \forall g_1,g_2\in G,\ f_1,f_2\in F,
\]
and condition \eqref{cond-compacta} is automatically verified.
Now if we take the trivial map $\alpha(f,g)=1$ which corresponds to $\gamma(x)=\varepsilon(x)1$, then conditions \eqref{cond-0}-\eqref{cond-compacta} are equivalent to
\[
\sigma\left(g\tl ff';f'^{-1},f^{-1}\right) =\overline{\sigma(g;f,f')},\quad
\tau\left(g\tl(g'\tr f),g'\tl f,f^{-1}\right) =\overline{\tau(g,g';f)},\quad
\sigma\left(g;f,f^{-1}\right) > 0,
\]
for all $g,g'\in G,\ f,f'\in F$.
\end{obse}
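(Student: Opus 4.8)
The plan is to handle each of the two particular cases by substituting the prescribed $\gamma$ --equivalently the prescribed scalar family $\alpha$-- into the conditions of Theorem \ref{teo:mp-cqg} and simplifying. The trivial case $\gamma(x)=\varepsilon(x)1$, i.e. $\alpha\equiv 1$, is purely a matter of direct substitution; the case $\gamma(x)=\sum\chi^{-1}(x_2,\ant^{-1}(x_1))$, i.e. $\alpha(f,g)=\sigma(g;f,f^{-1})^{-1}$, is best treated by invoking the chain of equivalences already at our disposal and then reading the surviving Andruskiewitsch conditions off at the level of the groups.

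\emph{The trivial case $\alpha\equiv 1$.} Putting $\alpha\equiv 1$ in \eqref{cond-0} and \eqref{cond-1} turns both into the tautology $1=1$, so they impose nothing. Condition \eqref{cond-2} collapses to $\sigma(g;f,f')=\overline{\sigma(g\tl ff';f'^{-1},f^{-1})}$, which after conjugation is exactly the first stated relation $\sigma(g\tl ff';f'^{-1},f^{-1})=\overline{\sigma(g;f,f')}$; condition \eqref{cond-3} collapses to $\tau(g,g';f)=\overline{\tau(g\tl(g'\tr f),g'\tl f;f^{-1})}$, which likewise is the second stated relation; and \eqref{cond-compacta} becomes $\sigma(g;f,f^{-1})>0$, the third. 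Conversely, the three stated relations return precisely \eqref{cond-2}, \eqref{cond-3} and \eqref{cond-compacta}, while \eqref{cond-0} and \eqref{cond-1} hold for free. This disposes of the second case.

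\emph{The case $\gamma(x)=\sum\chi^{-1}(x_2,\ant^{-1}(x_1))$.} For $x=f$ a group element one has $\Delta(f)=f\otimes f$ and $\ant^{-1}(f)=f^{-1}$, so $\gamma(f)=\chi^{-1}(f,f^{-1})=\sum_g\sigma(g;f,f^{-1})^{-1}e_g$, confirming $\alpha(f,g)=\sigma(g;f,f^{-1})^{-1}$. By Theorem \ref{teo:mp-cqg}(1) the conditions \eqref{cond-0}--\eqref{cond-3} are equivalent to \eqref{cero}--\eqref{cuatro}, and by Theorem \ref{theo:equivalence} these are in turn equivalent to \eqref{eqn:and1}--\eqref{eqn:and4}. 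In the present group setting \eqref{eqn:and1} and \eqref{eqn:and3} hold automatically --this is the assertion, used in the proof of Theorem \ref{teo:mp-cqg}, that \eqref{uno} and \eqref{tres} are automatic-- so only \eqref{eqn:and2} and \eqref{eqn:and4} remain. Writing $\chi(f,f')=\sum_g\sigma(g;f,f')e_g$ and $\psi(f)=\sum_{g,g'}\tau(g,g';f)\,e_g\otimes e_{g'}$, recalling that on $\C F$ one has $\ant^{-1}(f^*)=f$, that the convolution inverses are pointwise ($\chi^{-1}(f,f')=\sum_g\sigma(g;f,f')^{-1}e_g$, similarly for $\psi^{-1}$), and that $*$ fixes each $e_g$ and is conjugate-linear, condition \eqref{eqn:and2} reads $\overline{\sigma(g;f,f')}=\sigma(g;f,f')^{-1}$ and \eqref{eqn:and4} reads $\overline{\tau(g,g';f)}=\tau(g,g';f)^{-1}$; that is, $|\sigma(g;f,f')|=1$ and $|\tau(g,g';f)|=1$ for all arguments. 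Hence \eqref{cond-0}--\eqref{cond-3} are equivalent to these two magnitude conditions.

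It remains to see that here \eqref{cond-compacta} is automatic. Specializing the cocycle identity \eqref{sigma-1} at $f'=f^{-1},\ f''=f$ and using the normalization \eqref{sigma-2} gives $\sigma(g\tl f;f^{-1},f)=\sigma(g;f,f^{-1})$; therefore $\alpha(f^{-1},g\tl f)=\sigma(g\tl f;f^{-1},f)^{-1}=\sigma(g;f,f^{-1})^{-1}$, so that $\alpha(f^{-1},g\tl f)\,\sigma(g;f,f^{-1})=1>0$, which is exactly \eqref{cond-compacta}. I expect the only delicate bookkeeping to lie in this last part of the argument: correctly tracking the conjugate-linearity of $*$ together with the pointwise form of the convolution inverses when translating \eqref{eqn:and2} and \eqref{eqn:and4} into group language, and carrying out the cocycle manipulation that yields $\sigma(g\tl f;f^{-1},f)=\sigma(g;f,f^{-1})$.
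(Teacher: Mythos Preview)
Your proof is correct. The paper states this Observation without proof, so you are supplying the missing verification; your route through Theorem~\ref{theo:equivalence} for the case $\gamma(x)=\sum\chi^{-1}(x_2,\ant^{-1}(x_1))$ is the natural one given the machinery already in place, and the direct substitution for $\alpha\equiv 1$ is exactly what is needed.

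One small remark on exposition: when you justify that \eqref{eqn:and1} and \eqref{eqn:and3} hold automatically by pointing to the automaticity of \eqref{uno} and \eqref{tres} in the proof of Theorem~\ref{teo:mp-cqg}, the link is not literal identity but passes through Observation~\ref{obse:four}(3) and (5), which show $\eqref{uno}\Leftrightarrow\eqref{eqn:and1}$ and (given \eqref{eqn:and1}) $\eqref{tres}\Leftrightarrow\eqref{eqn:and3}$ for invertible $\gamma$. Alternatively, since \eqref{eqn:and1} and \eqref{eqn:and3} do not involve $\gamma$ at all, one can verify them directly in the group setting: $(f\trb e_g)^*=e_{g\tl f^{-1}}=\ant^{-1}(f^*)\trb e_g^*$, and for \eqref{eqn:and3} the matched-pair identity $(g\tl f)\tr f^{-1}=(g\tr f)^{-1}$ does the job. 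Either way your conclusion stands.
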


\subsection{Examples}\label{subsection:ejemplos}

In this subsection we present two examples, where the first one is a parametric family of CQG that generalizes an example due to Masuoka \cite{kn:masuoka3}.

Both examples are based on the same matched pair of groups.

Let $n$ be an integer greater than one and $C_n$ be the cyclic group of order $n$.
We consider  $F=C_2=\{1,x\}$ and $G=C_n\times C_n=\left\{a^ib^j:\ i,j\in\Z_n\right\}$, where $a,b$ are generators of $C_n$.
The group $F$ acts on $G$ by group automorphisms with the following action:
$$ a^ib^j \tl x=a^ib^{-j},\qquad \forall i,j\in\Z_n.
$$
Hence if we consider the trivial action $\tr:G\times F\to F$, then $(G,F,\tl,\tr)$ is a matched pair of groups and
$G\bowtie F$ is the semidirect product of $G$ and $F$.

\begin{example} \label{ej:6.15}
If we take $\sigma$ as the trivial map --that produces the trivial cocycle-- and we define $\tau$ by
\[
\tau\cc a^ib^j,a^kb^l;1\dd=1,\qquad \tau\cc a^ib^j,a^kb^l;x\dd=\zeta^{jk} \eta^{il}. \qquad \forall i,j,k,l\in\Z_n.
\]
where $\zeta,\eta$ are complex numbers with $|\zeta|=|\eta|=1$, then $(\sigma,\tau)$ is a pair of compatible normal cocycles and therefore 
we have a Hopf algebra extension $\C^{C_n \times C_n }\to \C^{C_n \times C_n}\#_{\sigma,\tau}\C C_2\to \C C_2$.

The only solution of equations \eqref{cond-0}-\eqref{cond-compacta} is the trivial one, so $\C^{C_n \times C_n}\#_{\sigma,\tau}\C C_2$ is a CQG by defining
\[
\cc e_{ij}\# 1\dd^*=e_{ij}\# 1,\quad \cc e_{ij}\# x\dd^*= e_{i,-j}\# x,\quad \forall i,j\in\Z_n.
\]
\end{example}

\begin{example} \label{ej:6.16}

We change slightly our perspective and as both groups are abelian we view the matched pair
\begin{align*}
C_n \times C_n \stackrel{\triangleleft}{\longleftarrow} (C_n \times C_n&) \times
C_2 \stackrel{\triangleright}{\longrightarrow}C_2,\\
a^ib^{-j}=a^ib^j\triangleleft x \longleftarrow (a^ib^j&,x) \longrightarrow a^ib^j \triangleright x = x
\end{align*}
as actions on the other side:
\begin{align*}
C_2 \stackrel{\leftharpoonup}{\longleftarrow} C_2 \times (&C_n \times C_n) \stackrel{\rightharpoonup}{\longrightarrow}
C_n \times C_n,\\
x=x \leftharpoonup a^ib^j \longleftarrow (x&,a^ib^j) \longrightarrow x \rightharpoonup a^ib^j = a^ib^{-j}.
\end{align*}
Now we take $\tau$ as the trivial map --that produces the trivial cococycle--, and we define $\sigma$ by
\[
\sigma\cc 1; a^ib^j,a^kb^l \dd=1,\qquad \sigma\cc x; a^ib^j,a^kb^l\dd=\zeta^{il}\eta^{-jk}, \qquad \forall i,j,k,l\in\Z_n,
\]
where $\zeta,\eta$ are complex numbers with $|\zeta|=|\eta|=1$, then we have a Hopf algebra extension \\
$\C^{C_2}\to \C^{C_2}\#_{\sigma,\tau}\C [C_n \times C_n]\to \C [C_n \times C_n]$ with similar
properties than before.

\bigbreak
We obtain a solution of equations \eqref{cond-0}-\eqref{cond-compacta} by defining $\alpha:(C_n\times C_n)\times C_2\to\C$ by
$\alpha(a^ib^j,1)=1$ and $\alpha(a^ib^j,x)=\cc\frac{\zeta}{\eta}\dd^{ij}$, for all $i,j\in\Z_n$. Then
$\C^{C_2}\#_{\sigma,\tau}\C [C_n \times C_n]$ is a CQG by defining 
\[
\cc e_1\# a^ib^j\dd^*=e_1\# a^{-i}b^{-j},\quad \cc e_x\# a^ib^j\dd^*=\cc\frac{\zeta}{\eta}\dd^{ij}e_x\# a^{-i}b^{-j},\quad  \forall i,j\in\Z_n,
\]
\end{example}

\end{document}